\newtheorem{theorem}{Theorem}[section]
\newtheorem{thmx}{Theorem}
\newtheorem{proposition}[theorem]{Proposition}
\newtheorem{corollary}[theorem]{Corollary}
\newtheorem{lemma}[theorem]{Lemma}
\newtheorem{main lemma}[theorem]{Main Lemma}
\theoremstyle{definition}
\newtheorem{definition}{Definition}
\newtheorem{remark}{Remark}
\newcommand{\C}{\mathbb{C}}
\newcommand{\R}{\mathbb{R}}
\newcommand{\N}{\mathbb{N}}
\def\S{\mathbb{S}}
\newcommand{\mf}[1]{\mathbf{#1}}
\def\pa{\partial}
\def\eps{\varepsilon}
\def\wc{\rightharpoonup}
\newcommand{\de}[1] {\mathrm{d} #1}
\DeclareMathOperator{\loc}{loc}
\DeclareMathOperator{\supp}{supp}
\DeclareMathOperator{\Int}{Int}
\title[Liouville theorems and $1$-dimensional symmetry]{Liouville theorems and $1$-dimensional symmetry for solutions of an elliptic system modelling phase separation}
\author{Nicola Soave and Susanna Terracini}
\address{
\hbox{\parbox{5.7in}{\medskip\noindent
 Nicola Soave\\
Mathematisches Institut, \\
Justus-Liebig-Universit\"at Giessen, \\
Arndtstrasse 2, 35392 Giessen (Germany),\\[2pt]
{\em{E-mail address: }}{\tt nicola.soave@gmail.com, nicola.soave@math.uni-giessen.de.}\\[5pt]
 S. Terracini\\
 Dipartimento di Matematica ``Giuseppe Peano'',\\
Universit\`a di Torino, \\
Via Carlo Alberto, 10,
10123 Torino (Italy). \\[2pt]
                                     \em{E-mail address: }{\tt susanna.terracini@unito.it.}}}
}
\thanks{
\indent The authors are partially supported through the project ERC Advanced Grant 2013 n. 339958 ``Complex Patterns for Strongly Interacting Dynamical Systems - COMPAT". 
 }
\begin{document}


%
%
%
\begin{abstract}
We consider solutions of the competitive elliptic system
\begin{equation}\tag{S}\label{syst 1}
\begin{cases}
-\Delta u_i = - \sum_{j \neq i} u_i u_j^2 & \text{in $\R^N$} \\
u_i >0 & \text{in $\R^N$} 
\end{cases} \qquad i=1,\dots,k.
\end{equation}
We are concerned with the classification of entire solutions, according with their growth rate. The prototype of our main results is the following: there exists a function $\delta=\delta(k,N) \in \N$, increasing in $k$, such that if $(u_1,\dots,u_k)$ is a solution of \eqref{syst 1} and
\[
u_1(x)+\cdots+u_k(x) \le C(1+|x|^d) \qquad \text{for every $x \in \R^N$},
\]
then $d \ge \delta$. This means that the number of components $k$ of the solution imposes a lower bound, increasing in $k$, on the minimal growth of the solution itself. If $N=2$, the expression of $\delta$ is explicit and optimal, while in higher dimension it can be characterized in terms of an optimal partition problem. We discuss the sharpness of our results and, as a further step, for every $N \ge 2$ we can prove the $1$-dimensional symmetry of the solutions of \eqref{syst 1} satisfying suitable assumptions, extending known results which are available for $k=2$. The proofs rest upon a blow-down analysis and on some monotonicity formulae. 
\end{abstract}

\maketitle

\section{Introduction}

This paper concerns the classification of \emph{positive} or \emph{nonnegative} entire solutions with algebraic growth of the competitive elliptic system
\begin{equation}\label{system}
 -\Delta u_i=-\sum_{\substack{j=1 \\ j \neq i}}^k u_j^2 u_i  \qquad \text{in $\R^N$, for $i=1,\dots,k$},
\end{equation}
with $k \ge 2$. Here and in the what follow, writing \emph{positive solution} we mean that $u_i>0$ in $\R^N$ for every $i$, while writing \emph{nonnegative solution} we admit the possibility that some $u_i$ vanish identically, requiring however that at least two components are non-trivial. Note that, by the strong maximum principle, if $u_i \ge 0$ and $u_i \not \equiv 0$, then $u_i>0$ in $\R^N$. The main result we aim at proving is the following Liouville-type theorem.
\begin{theorem}\label{protothm 1}
Let $N \ge 2$, and let $(u_1,\dots,u_k)$ be a positive solution of \eqref{system} having algebraic growth, that is, there exists $C,d>0$ such that
\begin{equation}\label{alg growth}
u_1(x)+\cdots+u_k(x) \le C(1+ |x|^d) \qquad \text{for every $x \in \R^N$}.
\end{equation}
Then 
\[
d \ge \sqrt{\left(\frac{N-2}{2}\right)^2+\mathcal{L}_k(\S^{N-1})}-\frac{N-2}{2},
\]
where $\mathcal{L}_k(\S^{N-1})$ is the spectral minimal partition sequence of $-\Delta_{\S^{N-1}}$ over $\S^{N-1}$, introduced in \cite{HeHHTer1} (see definition \eqref{def di L_k}). Note that  $\mathcal{L}_k(\S^{1})={k^2}/{4}$ when $N=2$.
\end{theorem}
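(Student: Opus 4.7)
The plan is to combine a blow-down analysis with an Almgren-type monotonicity formula and the variational characterization of $\mathcal{L}_k(\S^{N-1})$. To a solution $u=(u_1,\dots,u_k)$ of \eqref{system} I would attach the Almgren frequency
\[
N(x_0,r):=\frac{r\,E(x_0,r)}{H(x_0,r)},
\]
where $E(x_0,r):=\int_{B_r(x_0)}\bigl(\sum_i|\nabla u_i|^2+\sum_{i<j}u_i^2u_j^2\bigr)\,dx$ and $H(x_0,r):=\int_{\partial B_r(x_0)}\sum_i u_i^2\,d\sigma$, and prove by direct differentiation together with Pohozaev and Rellich identities that $r\mapsto N(x_0,r)$ is non-decreasing (a feature already exploited for strongly competing systems by Conti, Terracini and Verzini, and by Noris, Tavares, Terracini and Verzini). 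Setting $\alpha:=\lim_{r\to\infty}N(0,r)$, the growth assumption \eqref{alg growth} and standard elliptic estimates give $H(0,r)\le C\,r^{N-1+2d}$, which combined with the monotonicity forces $\alpha\le d$. Hence it suffices to bound $\alpha$ from below by the quantity in the thesis.

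The next step is a blow-down: set $L(R)^2:=R^{1-N}H(0,R)$ and $v_i^R(x):=u_i(Rx)/L(R)$. Using the scaling properties of $N$ and the uniform H\"older bounds for strongly competing elliptic systems, the family $\{v^R\}$ is bounded in $H^1_{\loc}(\R^N)\cap C^{0,\beta}_{\loc}(\R^N)$, and a subsequence converges locally uniformly to a limit $v^\infty\not\equiv 0$ (the normalization enforces $\int_{\partial B_1}\sum_i(v_i^\infty)^2\,d\sigma=1$). The rescaled interaction $u_i u_j^2$ carries a diverging coupling constant, and its limiting finiteness forces segregation $v_i^\infty v_j^\infty\equiv 0$ for $i\neq j$; each $v_i^\infty$ is then harmonic on the open set $\omega_i:=\{v_i^\infty>0\}$ and vanishes elsewhere. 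Moreover, the monotonicity of $N$ and $N(0,R)\to\alpha$ yield $N_{v^\infty}(0,\cdot)\equiv\alpha$, which by the equality case of Almgren's argument forces $v^\infty$ to be homogeneous of degree exactly $\alpha$.

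Writing $v_i^\infty(x)=|x|^\alpha g_i(x/|x|)$ with $g_i$ supported in $\omega_i\cap \S^{N-1}$, harmonicity of $v_i^\infty$ on $\omega_i$ translates into
\[
-\Delta_{\S^{N-1}} g_i = \alpha(\alpha+N-2)\,g_i \quad\text{in}\ \omega_i\cap \S^{N-1},\qquad g_i=0 \ \text{on}\ \partial(\omega_i\cap\S^{N-1}).
\]
Hence $\lambda_1(\omega_i\cap\S^{N-1})\le \alpha(\alpha+N-2)$ for every $i$, and since $(\omega_i\cap\S^{N-1})_{i=1}^k$ is an admissible $k$-partition of $\S^{N-1}$, the definition of $\mathcal{L}_k(\S^{N-1})$ gives $\mathcal{L}_k(\S^{N-1})\le\alpha(\alpha+N-2)$; solving this quadratic inequality for $\alpha\ge 0$ yields precisely the desired lower bound on $d$.

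The main obstacle will be running the blow-down so that the limiting partition actually has $k$ non-empty pieces (no component may collapse in the rescaling) and so that the nonlinear competition terms pass to the limit producing true segregation rather than mere overlap of supports. Both points depend on sharp uniform H\"older estimates for competitive systems applied uniformly to the whole family $\{v^R\}$, together with a careful matching between the normalization $L(R)$ and the Almgren monotonicity, and this is where the technical heart of the argument lies.
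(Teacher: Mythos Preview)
Your overall architecture matches the paper's exactly: Almgren monotonicity gives a finite limit $\alpha\le d$; a normalized blow-down produces a segregated, $\alpha$-homogeneous profile $v^\infty=r^\alpha\mathbf g(\theta)$; harmonicity of each $v_i^\infty$ on its support turns into $-\Delta_{\S^{N-1}}g_i=\alpha(\alpha+N-2)g_i$ on $\omega_i$; and then the definition of $\mathcal L_k(\S^{N-1})$ yields $\alpha(\alpha+N-2)\ge \mathcal L_k(\S^{N-1})$.

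However, you correctly flag the decisive issue---that the limiting partition must have $k$ nonempty pieces---but your proposed resolution is not adequate. Uniform H\"older bounds for the family $\{v^R\}$ and the choice of normalization $L(R)$ give compactness and ensure the \emph{sum} $\sum_i v_i^\infty\not\equiv 0$; they do \emph{not} prevent an individual component from collapsing. Since $L(R)$ normalizes the total $H$-functional, a component $u_i$ that grows at a strictly slower rate than the dominant ones would be driven to zero in $v_i^R$, and nothing in the H\"older estimates obstructs this.

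In the paper this is the content of a separate, substantial result (their Proposition~1.6): if $u_{i,R_n}\to 0$ in $\mathcal C^0_{\loc}$ along \emph{some} sequence $R_n\to\infty$, then $u_i\equiv 0$. The proof is not a refinement of the H\"older machinery at all; it uses (i) an argument that collapse along one sequence forces collapse along every sequence (via a quantitative lower bound on the $L^2(\S^{N-1})$-mass of any nontrivial component of any blow-down limit), (ii) an exponential decay estimate showing the ``asymptotic support'' of the collapsing component has arbitrarily small measure on $\S^{N-1}$, and (iii) a one-phase Alt--Caffarelli--Friedman monotonicity formula for $r^{-\beta}\int_{B_r} f(|x|)\bigl(|\nabla u_i|^2+u_i^2\sum_{j\ne i}u_j^2\bigr)$ with arbitrarily large $\beta$, which then contradicts the algebraic upper bound $Cr^{2d}$. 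You should replace the sentence ``Both points depend on sharp uniform H\"older estimates\ldots'' with this mechanism; without it the step ``the limiting partition has $k$ pieces'' is a genuine gap.
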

In one direction, this means that if we consider a positive solution of \eqref{system} with a prescribed number $k$ of components, then we have a minimal admissible growth for the solution itself. As we will prove in Lemma \ref{lem: monot part}, the minimal growth is strictly increasing in $k$. In the opposite direction, we deduce that a bound on the growth of a positive solution imposes a bound on the number of components $k$ of the solution itself. 
When $N \ge 3$, the exact value of $\mathcal{L}_k(\S^{N-1})$ is known for $k=2$, and we will be able to solve it in the special case of $k=3$ components, thus extending what has been proved in \cite{HeHHTer2} for the two-sphere. As a consequence, we will prove the following. 

\begin{corollary}\label{protothm 22}
Let $N ,k \ge 2$, and let $(u_1,\dots,u_k)$ be a positive solution of \eqref{system}.\\
($i$) If the solution has linear growth, that is there exists $C>0$ such that
\begin{equation}\label{linear growth}
u_1(x)+\cdots+u_k(x) \le C(1+ |x|) \qquad \text{for every $x \in \R^N$},
\end{equation}
then $k = 2$ and the solution has growth rate $1$. \\
($ii$) If there exists $C>0$ such that 
\[
u_1(x)+\cdots+u_k(x) \le C(1+|x|^{3/2}) \qquad \text{for every $x \in \R^N$}.
\]
Then either $k=2$ and the solution has linear growth, 
or $k=3$ and the solution has growth rate $3/2$.
\end{corollary}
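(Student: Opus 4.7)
The plan is to deduce Corollary~\ref{protothm 22} from Theorem~\ref{protothm 1} by rewriting its conclusion in the equivalent form
\[
\mathcal{L}_k(\S^{N-1}) \le d(d+N-2)
\]
(which follows by squaring, both sides being non-negative), and then by plugging in the admissible values $d=1$ for part~($i$) and $d=3/2$ for part~($ii$). The comparison will be made against the first few values of $\mathcal{L}_k(\S^{N-1})$ together with the strict monotonicity $\mathcal{L}_{k-1}(\S^{N-1}) < \mathcal{L}_k(\S^{N-1})$ from Lemma~\ref{lem: monot part}.

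For part~($i$), I would record that $\mathcal{L}_2(\S^{N-1}) = N-1$, attained by the two open hemispheres: the first Dirichlet eigenfunction is (the restriction of) a coordinate function $x_j$, with eigenvalue $1 \cdot (1+N-2)= N-1$. Hypothesis~\eqref{linear growth} gives $d\le 1$, hence $d(d+N-2)\le N-1$, and by monotonicity $\mathcal{L}_k(\S^{N-1})>N-1$ for every $k\ge 3$. Therefore $k=2$, and the reverse inequality $d\ge 1$ supplied by Theorem~\ref{protothm 1} forces the growth rate to equal exactly $1$.

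For part~($ii$), the key spectral input is the identity $\mathcal{L}_3(\S^{N-1}) = \tfrac{3}{2}\bigl(N-\tfrac{1}{2}\bigr)$, to be established elsewhere in the paper --- the optimal $3$-partition being realised by three congruent spherical sectors of dihedral angle $2\pi/3$, generalising the $\S^{2}$ result of \cite{HeHHTer2}. Since $\tfrac{3}{2}(N-\tfrac{1}{2})= \tfrac{3}{2}(\tfrac{3}{2}+N-2)$, the hypothesis $d\le 3/2$ together with monotonicity restricts the admissible values of $k$ to $\{2,3\}$. When $k=3$, Theorem~\ref{protothm 1} gives $d\ge 3/2$, hence $d=3/2$. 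When $k=2$, one further has to exclude the range $d\in(1,3/2]$: the difference $w:=u_1-u_2$ satisfies $-\Delta w = u_1 u_2\, w$, and the rescalings $w_\lambda(x):=\lambda^{-d}w(\lambda x)$ admit, along a subsequence as $\lambda\to\infty$ and thanks to the a priori bound from the hypothesis, a limit $w_\infty$ that is homogeneous of degree $d$ and harmonic on all of $\R^N$ (the competitive term $u_1u_2$ vanishes in the segregated blow-down limit). Any homogeneous harmonic function on $\R^N$ is a harmonic polynomial, so its degree lies in $\N$; combined with $d\in[1,3/2]$, this yields $d=1$, i.e.\ linear growth.

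The principal obstacles to carrying this out rigorously are twofold. First, proving the identity $\mathcal{L}_3(\S^{N-1})=\tfrac{3}{2}(N-\tfrac{1}{2})$ is a genuinely nontrivial optimal-partition problem that does not reduce to the $N=3$ case and requires a dedicated argument. Second, justifying the blow-down step in the $k=2$ sub-case of part~($ii$) --- namely the uniform Hölder bounds on the rescalings $u_{i,\lambda}$, the segregation of the limit profile, and the global harmonicity of $w_\infty$ across the free boundary --- rests on the monotonicity formulas developed to prove Theorem~\ref{protothm 1} itself.
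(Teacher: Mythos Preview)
Your proposal is correct and follows essentially the same route as the paper: reduce the problem to Theorem~\ref{protothm 1}, feed in the values $\gamma(\mathcal{L}_2(\S^{N-1}))=1$ and $\gamma(\mathcal{L}_3(\S^{N-1}))=3/2$, and invoke the strict monotonicity of $k\mapsto\mathcal{L}_k(\S^{N-1})$ from Lemma~\ref{lem: monot part}. You also correctly isolate the two genuine inputs that require separate work --- the identity for $\mathcal{L}_3(\S^{N-1})$ (the paper's Theorem~\ref{thm: on optimal partition}) and the blow-down machinery.

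The one place where you diverge from the paper is the $k=2$ sub-case of part~($ii$). The paper does not re-run a blow-down argument on $w=u_1-u_2$; it simply invokes Theorem~\ref{thm blow down 2 comp} (quoted from \cite{BeTeWaWe}), which already asserts that for any two-component solution with $N(\mf{u},0,+\infty)=d<\infty$ the value $d$ is a positive integer. Your sketch is in fact a summary of how that theorem is proved, so nothing is wrong, but in the context of this paper it is a citation, not a step to be redone. If you keep your version, the normalization $w_\lambda(x)=\lambda^{-d}w(\lambda x)$ is slightly awkward because $d$ is what you are trying to pin down; the paper's normalization by $H(\mf{u},0,R)^{1/2}$ together with the Almgren monotonicity (Propositions~\ref{prop: monotonicity formula}--\ref{prop: doubling}) is what actually produces the homogeneous limit and forces $d=\tilde N(\mf{u}_\infty,0,r)$ to be constant.
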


Here and in the rest of the paper we write that $(u_1,\dots,u_k)$ \emph{has growth rate $d$} if
\[
\lim_{r \to +\infty} \frac{\frac{1}{r^{N-1}} \int_{\pa B_r} \sum_{i=1}^k u_i^2  }{r^{2d'}} = \begin{cases} +\infty & \text{if $d'<d$} \\
0 & \text{if $d'>d$},
\end{cases}
\]
where $B_r$ denotes the ball of center $0$ and radius $r$. We will observe that any solution of \eqref{system} has a growth rate.

As a further step, we address the proof of the validity of some De Giorgi-type conjectures for solutions of \eqref{system}: under suitable assumptions, we show that a solution of \eqref{system} is necessarily $1$-dimensional, namely up to a rotation it depends only on $1$ variable. In what follows we write that $(u_1,\dots,u_k)$ \emph{has algebraic growth} if it satisfies condition \eqref{alg growth} for some $C>0$ and $d>1$. If the stronger condition \eqref{linear growth} holds, we write that $(u_1,\dots,u_k)$ \emph{has linear growth}.
\begin{theorem}\label{thm: 1-dimensional symmetry}
Let $N \ge 2$, let $(u_1,\dots,u_k)$ be a nonnegative solution of \eqref{system}.
\begin{itemize}
\item[($i$)] If $(u_1,\dots,u_k)$ has linear growth, then all the components but two, say $u_1$ and $u_2$, are identically zero, and $(u_1,u_2)$ is $1$-dimensional. 
\item[($ii$)] If $(u_1,\dots,u_k)$ has algebraic growth and for some $i \neq j$
\[
\lim_{x_N \to \pm \infty} \left(u_i(x',x_N) - u_j(x',x_N) \right) = \pm \infty, 
\]
the limits being uniform in $x' \in \R^{N-1}$, then $(u_1,\dots,u_k)$ has linear growth, all the components $u_l$ with $l \neq i,j$ are identically zero, and $(u_i,u_j)$ is $1$-dimensional.
\end{itemize}
\end{theorem}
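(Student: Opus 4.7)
By Corollary \ref{protothm 22}$(i)$, the linear-growth hypothesis forces all but two components to vanish identically, and the surviving pair $(u_1,u_2)$ to have growth rate one. The task therefore reduces to proving $1$-dimensional symmetry for a positive solution of the two-component system satisfying the linear-growth bound. I would carry this out via a blow-down analysis. Set $u_i^R(x):=R^{-1}u_i(Rx)$; a direct computation yields $-\Delta u_i^R=-R^{4}u_i^R(u_j^R)^2$, and the growth bound makes $u_i^R$ uniformly bounded on compacts. Compactness results for strongly competing systems (Noris--Tavares--Terracini--Verzini and sequels) then provide a subsequential limit $(U_1,U_2)$ in $C^{0,\alpha}_\loc\cap H^1_\loc$ which is segregated ($U_1U_2\equiv 0$), with each $U_i$ non-negative, Lipschitz, subharmonic, and harmonic on its positivity set. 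A distributional-limit argument applied to $-\Delta(u_1^R-u_2^R)$ shows that $W:=U_1-U_2$ is harmonic on the whole of $\R^N$; the linear growth and Liouville's theorem make it affine; segregation and non-triviality of both components then read, after an orthogonal change of coordinates, $U_1=\alpha(x_N)_+$ and $U_2=\alpha(x_N)_-$ for some $\alpha>0$.

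To transfer this rigidity back to $(u_1,u_2)$, I would rely on a monotonicity formula tailored to \eqref{system}---either an Almgren-type frequency or an Alt--Caffarelli--Friedman product built from $u_1,u_2$ and their interaction. The value of the formula on the planar blow-down above can be read off; it equals the $r\to\infty$ limit of the formula on $(u_1,u_2)$ by the convergence of the rescalings; monotonicity then pins it to a constant on $(0,\infty)$, and the equality case forces $(u_1,u_2)$ to be homogeneous of degree one along $e_N$, which together with positivity yields $1$-dimensional symmetry.

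\textbf{Part (ii).} The strategy is to use the asymptotic hypothesis to upgrade the algebraic-growth bound to linear growth, and then invoke part (i). Let $d'\leq d$ denote the growth rate of the solution and consider the blow-down $u_l^R(x):=R^{-d'}u_l(Rx)$, possibly replaced by a renormalisation by the growth of $u_i-u_j$ in the direction $e_N$ so as not to annihilate the relevant components. The uniform-in-$x'$ hypothesis passes to the limit as $U_i\geq U_j$ on $\{x_N>0\}$ and $U_i\leq U_j$ on $\{x_N<0\}$; together with segregation $U_iU_j\equiv 0$ and non-negativity, this confines $\supp U_i\subseteq\overline{\{x_N\leq 0\}}$ and $\supp U_j\subseteq\overline{\{x_N\geq 0\}}$. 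Arguing that $U_i$ and $U_j$ are both non-trivial in the limit and that every other $U_l$ vanishes, each of $U_i,U_j$ becomes a non-negative harmonic function on an open half-space, continuous up to the boundary and vanishing on it; the Phragm\'en--Lindel\"of principle then identifies it with a positive multiple of $(\mp x_N)_+$. Hence $d'=1$, so the solution has linear growth, and Corollary \ref{protothm 22}$(i)$ combined with part (i) concludes the proof.

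\textbf{Main obstacle.} The most delicate point is the rigidity step in part (i)---passing from the asymptotic $1$-dimensional structure of the blow-down to exact $1$-dimensional symmetry of the whole solution---which requires the correct monotonicity formula for \eqref{system} and a clean analysis of its equality case. In part (ii), a secondary subtlety is to ensure that the blow-down really detects the $(u_i,u_j)$ pair: if $u_i-u_j$ grows strictly slower than $|x|^{d'}$, one has to renormalise by the growth rate of $u_i-u_j$ itself rather than by $d'$, and only afterwards read off the structure of the limit.
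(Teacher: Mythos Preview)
Your reduction to two components via Corollary~\ref{protothm 22} matches the paper. But the paper does not attempt to prove the rigidity step you flag as the main obstacle: once only $(u_1,u_2)$ survive with linear growth, it simply invokes Wang's theorems \cite{Wa,Wa2} as a black box. Your sketch of that rigidity (blow-down to a pair of half-spaces, then ``the equality case of a monotonicity formula forces homogeneity'') is not a proof; Wang's argument is substantially more delicate than what you describe, and the step ``equality in Almgren/ACF implies $1$-dimensionality'' does not follow in the form you state it (constancy of the frequency gives homogeneity, not dependence on a single variable). Since the result is already in the literature and the paper cites it, you should do the same rather than leave an acknowledged gap.

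\textbf{Part (ii).} Here there is a genuine missing idea. From $U_i\ge U_j$ on $\{x_N>0\}$ together with segregation you only get $U_j\equiv 0$ on $\{x_N>0\}$; you have no control on the remaining $U_l$ with $l\neq i,j$, so you cannot assert that $U_i$ is harmonic on the whole half-space, and your Phragm\'en--Lindel\"of step does not start. The paper closes this gap with two tools you do not mention. First, the decay estimate Lemma~\ref{lem: exp decay}: since $u_i(x',x_N)\to+\infty$ uniformly in $x'$, on cones in $\{x_N>0\}$ every $u_l$ with $l\neq i$ satisfies $-\Delta u_l\le -K^2 u_l$ with $K$ large, hence decays exponentially; after normalisation this gives $u_{l,R}\to 0$ pointwise on $\{x_N>0\}$ for \emph{all} $l\neq i$, and symmetrically on $\{x_N<0\}$ for all $l\neq j$. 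Second, Proposition~\ref{thm: zero total}: if one component vanishes in \emph{some} blow-down limit, it vanishes identically. Combining the two, $u_l\equiv 0$ for every $l\neq i,j$, and the conclusion follows from the two-component result of \cite{FaSo} directly (the paper does not first establish linear growth and then feed back into part~(i); linear growth is a consequence, not an intermediate step). Your proposed route through ``$d'=1$ then Corollary~\ref{protothm 22}'' could in principle be made to work, but only after you have already killed the extra components---which is exactly what the decay lemma and Proposition~\ref{thm: zero total} accomplish. Also note a sign slip: with $u_i-u_j\to+\infty$ as $x_N\to+\infty$, it is $\supp U_j$ that lies in $\overline{\{x_N\le 0\}}$, not $\supp U_i$.
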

We postpone a more precise discussion of our main results after a brief review of what is known on existence and qualitative properties of solutions of \eqref{system}. Such review has to be understood also as a motivation for our study.

The $2$ components system 
\begin{equation}\label{system 2 comp}
 \begin{cases}
-\Delta u=- u v^2  & \text{in $\R^N$} \\
-\Delta v = - u^2 v & \text{in $\R^N$} \\
u,v>0 & \text{in $\R^N$}
 \end{cases}
\end{equation}
has been widely investigated in recent years. It appears in the analysis of phase separation phenomena in a binary mixture of Bose-Einstein condensates with multiple states; we refer to the papers \cite{BeLiWeZh} by H. Berestycki, T.-C. Lin, J. Wei and C. Zhao, \cite{BeTeWaWe}  by H. Berestycki, K. Wang, J. Wei and the second author, and to the references therein for a detailed derivation of the phase separation model. 
In the quoted papers, the $1$-dimensional case has been completely classified: up to translations, scaling and exchange of the components there is only one positive solution $(u,v)$, which has linear growth, has the symmetry property $u(x)=v(-x)$ for every $x \in \R$, and satisfies the monotonicity condition $u'>0$, $v'<0$ in $\R$. The linear growth is the least admissible growth rate for positive solutions to \eqref{system 2 comp}; indeed in any dimension $N \ge 1$, if $(u,v)$ is a nonnegative solution of \eqref{system 2 comp} and satisfies the sublinear growth condition
\[
u(x)+v(x) \le C(1+|x|^\alpha) \qquad \text{in $\R^N$}
\]
for some $\alpha \in (0,1)$ and $C>0$, then one between $u$ and $v$ is $0$, and the other has to be constant. This has been proved by B. Noris, H. Tavares, G. Verzini and the second author in \cite{NoTaTeVe}, see Proposition 2.6, and together with its counterpart for systems with $k$ components, Proposition 2.7 in the same paper, is the only known example of Liouville-type theorem available for system \eqref{system}.

The non-existence of positive solutions having sublinear growth, and the existence of a positive solution with linear growth, suggest an analogy between problem \eqref{system 2 comp} and the Laplace equation. This point is made clear in \cite{BeTeWaWe}, where for every integer $d \in \N$ the authors constructed a positive solution $(u_d,v_d)$ of \eqref{system 2 comp} ``modelled on" the homogeneous harmonic polynomial $\Psi_d = \mathfrak{Re}(z^d)$, in the sense that $(u_d,v_d)$ has growth rate $d$ (the same asymptotic growth of $\Psi_d$), and $(u_d,v_d)$ exhibits the symmetry of $(\Psi_d^+,\Psi_d^-)$; in this way the authors associated to any homogeneous harmonic polynomial of two variables a positive solution of \eqref{system 2 comp}. Also the converse can be done: to any positive solution to \eqref{system 2 comp} having algebraic growth it is possible to associate a class of homogeneous harmonic polynomials, see the blow-down Theorem 1.4 in \cite{BeTeWaWe}. 
%
%
It is worth to point out that the dichotomy ``positive solutions to \eqref{system 2 comp}" -- ``harmonic function" is not an exclusive prerogative of solutions having algebraic growth, as revealed by the existence of solutions with exponential growth which are associated to exponential harmonic functions, for which we refer to the main results in \cite{SoZi} by A. Zilio and the first author. 

Most of the quoted achievements admit a natural counterpart for the $k$ components system \eqref{system} with $k>2$. In particular, for any $k>2$ there exist infinitely many positive solutions having algebraic or exponential growth (see Theorem 1.6 in \cite{BeTeWaWe} and Theorem 1.8 in \cite{SoZi}), which are ``modelled on" suitable harmonic functions. For this reason, the reader could be tempted to think that the qualitative description of the $k$ components system is essentially the same than that of the $2$ component system. As we shall see, when $k>2$ the picture is more involved. In what follows we restrict our attention to solutions having algebraic growth and, in order to better motivate our study, we report two aforementioned results in \cite{BeTeWaWe}. Concerning the notation, here and in the rest of the paper we denote by $B_r(x_0)$ the ball of centre $x_0$ and radius $r$ in $\R^N$, and write simply $B_r$ for $B_r(0)$; we use the complex notation $z=x+iy$ for points of $\C \simeq \R^2$, writing $\bar z$ for the complex conjugate of $z$, and we count the indexes $i=1,\dots,k,k+1,\dots$ modulus $k$.

\begin{thmx}[Theorem 1.4 in \cite{BeTeWaWe}]\label{thm blow down 2 comp}
Let $N \ge 2$, $(u,v)$ be a positive solution of \eqref{system}, and let us introduce 
\[
(u_{R}(x),v_R(x)):=\left(\frac{1}{R^{N-1}}\int_{\pa B_R} u^2+v^2\right)^{-1/2} (u(Rx),v(Rx)).
\]
Let us assume that 
\begin{equation}\label{asymptotic growth}
\lim_{r \to +\infty} \frac{r \int_{B_r} |\nabla u|^2 +|\nabla v|^2 +u^2 v^2}{\int_{\pa B_r} u^2 +v^2} =: d < +\infty.
\end{equation}
Then $d$ is a positive integer. Moreover, there exist a subsequence of the blow-down family $\{(u_{R},v_{R}): R >0\}$, and a homogeneous harmonic polynomial $\Psi$ of degree $d$, such that $(u_{R},v_{R}) \to (\Psi^+,\Psi^-)$ as $R \to +\infty$ in $\mathcal{C}^0_{\loc}(\R^N)$ and in $H^1_{\loc}(\R^N)$. 
\end{thmx}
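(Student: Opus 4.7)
The plan is to establish the statement by a blow-down argument based on an Almgren-type monotonicity formula and uniform regularity estimates for segregated limits. First I would introduce
\[
H(r) = \int_{\partial B_r}(u^2+v^2), \quad E(r) = \int_{B_r}(|\nabla u|^2+|\nabla v|^2+u^2v^2), \quad N(r) := \frac{rE(r)}{H(r)}
\]
and verify, by testing the equations with $(u,v)$ and with $(x\cdot\nabla u, x\cdot\nabla v)$ and computing the resulting Rellich/Pohozaev identities, that $N$ is non-decreasing on $(0,+\infty)$. In these terms the hypothesis \eqref{asymptotic growth} reads $N(r)\nearrow d$ as $r\to+\infty$. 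Combining this with the differential identity $H'(r)=\frac{N-1}{r}H(r)+2E(r)$ gives the two-sided asymptotic $H(R)/R^{N-1}\sim R^{2d}$, so that the normalising factor $L_R := (R^{1-N}\int_{\partial B_R}(u^2+v^2))^{1/2}$ satisfies $L_R\sim R^d$ for large $R$.

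Next I would compute that $(u_R,v_R)$ solves the same competitive system with coupling constant $\beta_R := R^2 L_R^2 \sim R^{2d+2}\to+\infty$, together with the normalisation $\int_{\partial B_1}(u_R^2+v_R^2)=1$ and the scaling identity $N_{(u_R,v_R)}(\rho) = N(R\rho)\le d$ for every $\rho>0$. From this bound and a Moser-type iteration for the system, $(u_R,v_R)$ is locally uniformly bounded; I would then invoke the uniform Lipschitz/H\"older estimates for competition systems as $\beta\to+\infty$ (due to Noris, Tavares, Terracini and Verzini) to deduce precompactness of $\{(u_R,v_R)\}$ in $C^{0,\alpha}_{\loc}(\R^N)\cap H^1_{\loc}(\R^N)$. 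Along a subsequence $(u_R,v_R)\to(U,V)$ with $U,V\ge 0$, $UV\equiv 0$, $\Delta U = 0$ in $\{U>0\}$, $\Delta V = 0$ in $\{V>0\}$, and, by the normalisation, $(U,V)\not\equiv 0$.

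To identify the limit I would pass the Almgren formula to the limit: lower semi-continuity of the Dirichlet integral combined with continuity of the boundary term gives $N_{(U,V)}(r)\le d$, while the normalisation prevents $N$ from dropping, forcing $N_{(U,V)}(r)\equiv d$ on $(0,+\infty)$. The equality case of the monotonicity formula then yields that $(U,V)$ is positively $d$-homogeneous. Setting $\Psi := U - V$, it remains to show that $\Psi$ is harmonic across the free boundary $\Gamma := \{U=V=0\}$; once this is done, $\Psi$ is a $d$-homogeneous entire harmonic function, hence a homogeneous harmonic polynomial of degree $d\in\N$, and the identifications $\Psi^+=U$, $\Psi^-=V$ conclude the proof.

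The main obstacle is precisely the last step. In dimension two the set $\Gamma$ is one-dimensional and one can argue directly, but in higher dimension $\Gamma$ may carry a non-trivial $(N-1)$-dimensional singular stratum, and one must rule out a singular contribution in $\Delta\Psi$ supported on it. The expected approach is to pass to the limit the domain-variation (Pohozaev) identity satisfied by $(u_R,v_R)$, obtaining an identity for $(U,V)$ incompatible with a singular part of $\Delta\Psi$; alternatively, one relies on the stratification and regularity theory for free boundaries of segregated limits developed in the phase-separation literature cited in the introduction.
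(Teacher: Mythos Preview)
Your outline is essentially the approach that the paper (and the original source \cite{BeTeWaWe}) follows. Note, however, that in the present paper this statement is \emph{cited}, not reproved in full: the paper's own blow-down result (Theorem~\ref{them: blow down k dim 2}) carries out the convergence and homogeneity for arbitrary $k$ and $N$, but establishes the (half-)integrality of $d$ only when $N=2$, via the explicit ODE analysis on $\S^1$; for $N\ge 3$ with $k=2$ the paper simply quotes \cite{BeTeWaWe}.

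Two small points. First, your justification of $\tilde N_{(U,V)}(r)\equiv d$ via ``lower semi-continuity plus normalisation'' is slightly off. The paper uses the \emph{strong} $H^1_{\loc}$ convergence (which you already obtained from the uniform estimates) together with the scaling identity $N(\mf{u}_R,0,r)=N(\mf{u},0,Rr)\to d$ and the fact that the interaction term tends to $0$ in $L^1_{\loc}$; this gives $\tilde N_{(U,V)}(r)=\lim_R N(\mf{u}_R,0,r)=d$ directly, without any semi-continuity argument.

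Second, you correctly identify the crux for $N\ge 3$. The resolution in \cite{BeTeWaWe}, available through \cite{TaTe}, is precisely the free-boundary regularity you allude to: the limit $(U,V)$ belongs to the class $\mathcal{G}_{\loc}(\R^N)$, the reflection law makes $\Psi=U-V$ harmonic across the regular part of $\{U=V=0\}$, and the singular set of the free boundary has Hausdorff dimension at most $N-2$. Since $\Psi$ is locally bounded, this set is removable, so $\Psi$ is an entire $d$-homogeneous harmonic function, forcing $d\in\N$.
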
 

\begin{thmx}[Theorem 1.6 in \cite{BeTeWaWe}]\label{thm: ex tante comp}
Let $k \ge 2$ and $d \in \N/2$ such that $2d=hk$ for some $h \in \N$; let $G_{\pi/d}$ denote the rotation of angle $\pi/d$, with order $2d$. There exists a positive solution of system \eqref{system} in $\R^2$ such that
\begin{align*}
\text{($i$)} &\quad u_i(z) = u_{i+1}(G_{\pi/d}z) \qquad \text{in $\C$, $i=1,\dots,k$} \\
\text{($ii$)} &\quad u_{k+i-1}(z) = u_{i}( \bar z) \qquad \text{in $\C$, $i=1,\dots,k$} \\
\text{($iii$)} &\quad \lim_{r \to +\infty} \frac{r \int_{B_r} \sum_{i=1}^k |\nabla u_i|^2 +\sum_{1 \le i<j\le k}u_i^2 u_j^2}{\int_{\pa B_r} \sum_{i=1}^k u_i^2} = d \\
\text{($iv$)} &\quad  \lim_{r \to +\infty} \frac{1}{r^{1+2d}} \int_{\pa B_r} \sum_{i=1}^k u_i^2=b
\end{align*}  
for some $b \in (0,+\infty)$.
\end{thmx}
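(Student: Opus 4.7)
The plan is to construct the solution as a limit of energy minimizers on an exhausting family of balls $B_R \subset \R^2$, with Dirichlet data prescribed on $\pa B_R$ and the required rotational and reflectional symmetries imposed on the admissible functions from the outset.

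First, choose a model. Set $h := 2d/k \in \N$ and partition $\S^1$ into $2d$ open arcs $I_1, \dots, I_{2d}$ of length $\pi/d$. Assign the arcs cyclically to the $k$ components so that conditions (i) and (ii) hold at the level of supports, and pick nonnegative bumps $g_i \in C^\infty(\S^1)$ supported in the arcs assigned to $u_i$, chosen so that $(g_1, \dots, g_k)$ is equivariant under the finite group $G$ generated by the rotation $G_{\pi/d}$ combined with the cyclic shift of indices, and by $z \mapsto \bar z$ combined with the index reversal dictated by (ii). Define the boundary profile $\phi_i(re^{i\t}) := r^d g_i(\t)$, which encodes the $r^d$ growth of the target homogeneous harmonic.

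Second, on the closed convex set
\begin{equation*}
\mathcal{K}_R := \{ u \in H^1(B_R;\R^k) : u_i \ge 0,\ u_i = \phi_i \text{ on } \pa B_R,\ u \text{ is } G\text{-invariant} \}
\end{equation*}
minimize
\begin{equation*}
E_R(u) := \int_{B_R} \Bigl( \tfrac{1}{2} \sum_{i=1}^k |\nabla u_i|^2 + \tfrac{1}{4} \sum_{i<j} u_i^2 u_j^2 \Bigr).
\end{equation*}
Coercivity, convexity in the gradient and weak lower semicontinuity of the interaction term yield a minimizer $u^R \in \mathcal{K}_R$. By Palais's principle of symmetric criticality, $u^R$ is a weak solution of \eqref{system} on $B_R$ with the prescribed symmetries; since each $g_i \not\equiv 0$, the strong maximum principle gives $u^R_i > 0$ in $B_R$. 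The uniform Lipschitz estimates for strongly competing systems of Noris--Tavares--Terracini--Verzini then supply equi-H\"older continuity for the rescaled family $\tilde u^R(x) := R^{-d} u^R(Rx)$ on compact subsets of $\R^2$.

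Third, pass to the limit. Extract $R_n \to \infty$ along which $u^{R_n} \to u^\infty$ in $C^0_{\loc}(\R^2) \cap H^1_{\loc}(\R^2)$; the limit is a nonnegative $G$-invariant solution of \eqref{system} on all of $\R^2$. Positivity of each component in the limit follows from a symmetric barrier argument: the $G$-equivariance forces any vanishing component to vanish in every rotated copy, which a direct variational lower-energy competitor rules out. For the frequency property (iii), the harmonic extension of $\phi$ into $B_R$ furnishes the sharp upper bound $E_R(u^R) \le C R^{2d}$, while the monotonicity of Almgren's quotient along \eqref{system} (Berestycki--Lin--Wei--Zhao) supplies the matching lower bound; together they pin the asymptotic frequency of $u^\infty$ to $d$. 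Identity (iv) then follows from integrating $\tfrac{d}{dr} \log \bigl( r^{-(N-1)} \int_{\pa B_r} \sum_i u_i^2 \bigr) = 2N(r)/r$ once $N(r) \to d$.

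The main obstacle is the simultaneous preservation of the prescribed homogeneity degree $d$ and of the full cyclic sector structure in the limit: a priori, minimizers could concentrate near $\pa B_R$, causing the frequency to drop, or a component could collapse interiorly and destroy the partition. The $G$-equivariance rules out asymmetric collapse, while the monotonicity of Almgren's quotient paired with the variational upper bound from the harmonic extension locks the growth rate at exactly $d$.
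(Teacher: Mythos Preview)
This statement is not proved in the present paper at all: it is Theorem~B, quoted verbatim as Theorem~1.6 of Berestycki--Terracini--Wang--Wei \cite{BeTeWaWe} and invoked only as background for the discussion of admissible growth rates. There is therefore no proof here to compare your attempt against.

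That said, your sketch is in the spirit of the construction in \cite{BeTeWaWe} (minimize the energy on $B_R$ within a $G$-equivariant class, extract a diagonal limit, control the growth via Almgren monotonicity), but several steps would not go through as written. The Noris--Tavares--Terracini--Verzini bounds you cite are uniform H\"older estimates for $\beta\to\infty$ with fixed domain, not for fixed $\beta=1$ on expanding balls; the compactness for $\tilde u^R$ comes instead from the Almgren monotonicity and the associated doubling property. Palais' principle does not apply directly on the convex set $\{u_i\ge 0\}$: you must argue separately that a minimizer with nontrivial boundary data is interior (hence a genuine critical point). The two substantive gaps are the positivity of each component in the limit and the identification of the exact frequency $d$: your lines ``a direct variational lower-energy competitor rules out'' and ``the monotonicity \dots\ supplies the matching lower bound'' are precisely where the work in \cite{BeTeWaWe} lies, and they handle it by normalizing the minimizers at an interior point before passing to the limit rather than by the energy comparison you indicate. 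Finally, (iv) does not follow from $N(r)\to d$ alone; one needs the integrability of $(N(r)-d)/r$ at infinity, which in \cite{BeTeWaWe} comes from a sharper form of the monotonicity formula.
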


As previously stated, Theorem \ref{thm blow down 2 comp} allows us to associate to any positive solution $(u,v)$ of \eqref{system 2 comp} a homogeneous harmonic polynomial. In particular, this implies a quantization of the admissible growth rates at infinity, see the limit \eqref{asymptotic growth} and the forthcoming Proposition \ref{prop: sui growth rates}.
The very same quantization cannot be expected when $k>2$: indeed, Theorem \ref{thm: ex tante comp} provides solutions with half-integer asymptotic growth for every odd $k$, see point ($iv$). More important, the presence of more than $2$ components prevents the possibility that, if an asymptotic profile exists, has the simple structure $(\Psi^+,\Psi^-)$ for some homogeneous harmonic polynomial $\Psi$. In light of these remarks, an interesting problem is the description of the asymptotic profiles of the solutions of \eqref{system}. As a further question, we observe that Theorem \ref{thm: ex tante comp} ensures the existence of a positive solution $(u,v)$ to \eqref{system} with minimal growth rate $3/2$ when $k=3$, $2$ when $k=4$, $5/2$ when $k=5$, $\ldots$; we recall that writing ``positive solution" we mean that $u_i>0$ in $\R^N$ for every $i$. It is natural to wonder if these are really the minimal admissible growth rates or not. In the opposite direction, is it true that if a nonnegative solution of \eqref{system} has growth rate $d$, then there exists a maximal number of components depending on $d$ and on the dimension $N$ which cannot vanish identically? We recall that in this spirit the non-existence results for positive solutions having sublinear growth holds also when $k>2$, see Proposition 2.7 in \cite{NoTaTeVe}.

The aim of this paper is to answer the previous open problems and questions. Moreover, once that such topics are discussed, we will be able to extend some results of $1$-dimensional symmetry of solutions in the present setting. The proof of the validity of some De Giorgi's-type conjectures for positive solutions of \eqref{system 2 comp} has been object of an increasing attention in the last years. In dimension $N=2$, A. Farina proved that if $(u,v)$ has algebraic growth and $\pa_2 u>0$ in $\R^2$, then $(u,v)$ is $1$-dimensional. This enhances a previous result in \cite{BeLiWeZh}, where the $1$-dimensional symmetry of $(u,v)$ was obtained under the linear growth assumption of $(u,v)$ plus the monotonicity condition $\pa_2 u>0$ and $\pa_2 v<0$ in $\R^2$. Always in dimension $N=2$, in \cite{BeTeWaWe} it has been proved that if $(u,v)$ is a stable solution of \eqref{system 2 comp} having linear growth, then it is $1$-dimensional. Symmetry results in dimension $N=2$ for systems having a more general form, under either monotonicity or stability assumptions, have been achieved by S. Dipierro \cite{Dip}. In the higher dimensional case $N \ge 2$, A. Farina and the first author proved in \cite{FaSo} that if $(u,v)$ has algebraic growth and 
\[
\lim_{x_N \to \pm \infty} \left( u(x',x_N)-v(x',x_N) \right) = \pm \infty,
\] 
the limit being uniform in $x' \in \R^{N-1}$, then $(u,v)$ depends only on $x_N$. This positively answer a conjecture formulated in \cite{BeLiWeZh}. Furthermore, as product of the main results in \cite{Wa,Wa2}, K. Wang showed that if $(u,v)$ has linear growth (without other assumptions), then it is $1$-dimensional. 

As stated in Theorem \ref{thm: 1-dimensional symmetry}, our aim is to extend the two last quoted achievements for solutions of the $k$ components system \eqref{system}. 

In what follows, we introduce convenient notations and state our main results in a precise form.

%



\subsection{Notation and further results.}

\begin{itemize}
\item We use the vector notation $\mf{u}:=(u_1,\dots,u_k)$.
\item Let $A_1,A_2$ be disjoint open subsets of $\R^N$; we write that $A_1$ and $A_2$ are \emph{adjacent} if $\pa A_i \cap \pa A_j$ has positive $(N-1)$-dimensional Hausdorff measure. 
\item For any continuous function $u$ in $\R^N$, the set $\{u>0\}$ is called \emph{positivity domain} of $u$, and its connected components are called \emph{nodal domains}.
\item For a vector valued function $\mf{u}$, we call \emph{nodal set} or \emph{zero level set} $\{\mf{u}=\mf{0}\}$.
\item For any $A \subset \R^N$, we write $\chi_A$ for the characteristic function of $A$.
\item For any $A \subset \R^N$, we write $\Int(A)$ for the interior of $A$.
\item In the proof of our results we often write u.t.s. instead of ``up to a subsequence".
\item The notation $\mathcal{H}^{m}(\Omega)$ is used for the $m$-dimensional Hausdorff measure of $\Omega \subset \R^N$.
\item For any $\omega \subset \pa B_1$, the first eigenvalue of the Laplace-Beltrami operator $-\Delta_{\S^{N-1}}$ with Dirichlet boundary condition on $\omega$ is denoted by $\lambda_1(\omega)$.
\item We often write
\[
f(0^+):= \lim_{r \to 0^+} f(r) \quad \text{and} \quad f(+\infty):= \lim_{r \to +\infty} f(r)
\]
if the limits exist.
\end{itemize}

In the paper we consider two classes of variational problems: regular ones of type
\begin{equation}\label{regular pb}
\begin{cases}
-\Delta u_i = -\beta \sum_{j \neq i} u_j^2 u_i & \text{in $\R^N$} \\
u_i \ge 0 & \text{in $\R^N$},
\end{cases}
\end{equation}
where $\beta>0$, and segregated ones of type
\begin{equation}\label{segregated pb}
\begin{cases}
-\Delta v_i = 0 & \text{in $\{v_i>0\}$} \\
v_i \ge 0 & \text{in $\R^N$}, \\
v_i v_j \equiv 0 & \text{in $\R^N$ for every $i \neq j$}.
\end{cases}
\end{equation}
We introduce suitable \emph{Almgren  frequency functions} according to whether we are considering \eqref{regular pb} or \eqref{segregated pb}. If $\mf{u}$ is a solution of \eqref{regular pb}, for $x_0 \in \R^N$ and $r>0$ we define 
\begin{equation}\label{def N regular}
\begin{split}
  \bullet \quad & H(\mf{u},x_0,r):= \frac{1}{r^{N-1}} \int_{\partial B_r(x_0)} \sum_{i=1}^k u_i^2 \\
  \bullet \quad & E(\mf{u},x_0,r):= \frac{1}{r^{N-2}} \int_{B_r(x_0)} \sum_{i=1}^k |\nabla u_i|^2+ \beta \sum_{1\le i<j\le k} u_i^2 u_j^2 \\ 
  \bullet \quad & N(\mf{u},x_0,r):= \frac{E(\mf{u},x_0,r)}{H(\mf{u},x_0,r)} \qquad (\text{Almgren frequency function}).
\end{split}  
    \end{equation}
If $\mf{v}$ is a solution of \eqref{segregated pb}, for $x_0 \in \R^N$ and $r>0$ we set 
\begin{equation}\label{def N segregated}
\begin{split}
  \bullet \quad & \tilde E(\mf{v},x_0,r):= \frac{1}{r^{N-2}} \int_{B_r(x_0)} \sum_{i=1}^k |\nabla v_i|^2 \\ 
  \bullet \quad & \tilde N(\mf{v},x_0,r):= \frac{\tilde E(\mf{v},x_0,r)}{ H(\mf{v},x_0,r)} \qquad (\text{Almgren frequency function}),
  \end{split}
\end{equation}
where $H(\mf{v},x_0,r)$ is defined as in the first one of the \eqref{def N regular}.

Let $\mf{u}$ be a solution of \eqref{system} (or to \eqref{segregated pb}). We set, for $x_0 \in \R^N$ and $R>0$,
\[
\mf{u}_{x_0,R}(x):=\frac{\mf{u}(x_0+Rx)}{H(\mf{u},x_0,R)^{1/2}} . 
\]
We are interested in the asymptotic behaviour of the \emph{blow-down family} $\{\mf{u}_{x_0,R}\}$ as $R \to +\infty$. We mainly consider the case $x_0=0$, writing simply $\mf{u}_{R}$ instead of $\mf{u}_{0,R}$ to simplify the notation.

The first of our main results is the extension of the blow-down Theorem \ref{thm blow down 2 comp} in the present setting. 

\begin{theorem}\label{them: blow down k dim 2}
Let $N,k\ge 2$, let $\mf{u}$ be a nonnegative solution of \eqref{system}, and let us assume that 
\[
\lim_{r \to +\infty} N(\mf{u},0,r) =: d < +\infty.
\] 
Then, up to a subsequence, 
\[
\mf{u}_R \to \mf{u}_\infty =r^d(g_1(\theta), \dots, g_k(\theta)) \quad \text{as $R \to +\infty$}
\]
in $\mathcal{C}^0_{\loc}(\R^N)$ and in $H^1_{\loc}(\R^N)$, where $(r,\theta) \in [0,+\infty) \times \S^{N-1}$ is a system of polar coordinates in $\R^N$ centred in $0$. Furthermore:
\begin{itemize}
\item the components $u_{i,\infty}$ are nonnegative and with disjoint support: $u_{i,\infty} u_{j,\infty} \equiv 0$ for every $i \neq j$;
\item  $\Delta u_{i,\infty}=0$ in the positivity domain $\{u_{i,\infty} >0\}$;
\item if for some $i \neq j$ there exists two adjacent nodal domains $B_i \subset \{u_{i,\infty}>0\}$ and $B_j \subset \{u_{j,\infty}>0\}$, then $u_{i,\infty}-u_{j,\infty}$ is harmonic in $\Int(\overline{B_i \cup B_j})$;
\item the set $\{\mf{u}_\infty=\mf{0}\} \cap \pa B_1$ has null $(N-1)$-dimensional measure;
\item $H(\mf{u},0,R) R^2 \sum_{i < j} u_{i,R}^2 u_{j,R}^2 \to 0$ in $L^1_{loc}(\R^N)$.
\end{itemize}
Let now $N=2$. Then, in addition, $d$ is a half-integer. Moreover, letting
\[
\Psi_d(r,\theta):= \frac{1}{\sqrt{\pi}} r^d \sin\left(d\theta\right),
\]
there exists a partition $(A_1,\dots,A_k)$ of the positivity domain $\Sigma_{|\Psi_d|}=\{|\Psi_d| > 0\}$, where for every $i$
\[
\text{$A_i$ is the union of non-adjacent nodal domains of $\Sigma_{|\Psi_d|}$},
\]
such that, up to a subsequence and up to a rotation, $\mf{u}_R \to (\chi_{ A_1},\dots,\chi_{A_k})|\Psi_d|$ as $R \to +\infty$, in $\mathcal{C}^0_{loc}(\R^N)$ and in $H^1_{loc}(\R^N)$. 
\end{theorem}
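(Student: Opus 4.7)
The backbone of the argument is a blow-down analysis driven by an Almgren-type monotonicity formula for $N(\mf{u},0,\cdot)$. First I would establish (or cite from the segregation literature, following Noris--Tavares--Terracini--Verzini and Tavares--Terracini) that $r \mapsto N(\mf{u},0,r)$ is non-decreasing, so its limit $d$ at infinity exists and, combined with the identity
\[
\frac{d}{dr}\log H(\mf{u},0,r)=\frac{2}{r}N(\mf{u},0,r),
\]
gives the doubling estimate $H(\mf{u},0,tr)\le C t^{2d}H(\mf{u},0,r)$ for $t\ge 1$. Rewriting the system for the rescaled family $\mf{u}_R$ one sees that $\mf{u}_R$ solves \eqref{regular pb} with competition parameter $\beta_R:=H(\mf{u},0,R)R^2\to+\infty$, while $H(\mf{u}_R,0,\rho)\le C\rho^{2d}$ uniformly in $R$ on every compact set of $\R^N$. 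Uniform $L^\infty_{\loc}$ bounds plus the now-standard uniform-in-$\beta$ Hölder estimates for strongly competitive systems yield, up to subsequence, convergence $\mf{u}_R\to\mf{u}_\infty$ in $\mathcal{C}^0_{\loc}\cap H^1_{\loc}$.

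The properties of $\mf{u}_\infty$ follow from the strong-competition limit theory: since $\beta_R\to+\infty$ and the $L^2$-norm of $\sqrt{\beta_R}\,u_{i,R}u_{j,R}$ is controlled, in the limit $u_{i,\infty}u_{j,\infty}\equiv 0$; each $u_{i,\infty}$ is harmonic in $\{u_{i,\infty}>0\}$ by passing to the limit in the equation; and the class $\mathcal{G}$ (differences are harmonic across $(N-1)$-regular interfaces) is preserved, giving that $u_{i,\infty}-u_{j,\infty}$ is harmonic in $\Int(\overline{B_i\cup B_j})$ whenever two adjacent nodal domains meet. The null $\mathcal{H}^{N-1}$ measure of $\{\mf{u}_\infty=\mf{0}\}\cap\pa B_1$ is obtained via the free-boundary regularity for segregation limits (Hausdorff dimension of the interface is at most $N-1$, with $(N-1)$-dimensional stratum of regular points). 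Next, because $N(\mf{u}_R,0,r)=N(\mf{u},0,Rr)\to d$ for every $r>0$, the Almgren frequency of the limit is \emph{constant}: $\tilde N(\mf{u}_\infty,0,r)\equiv d$. Applying the rigidity case of the monotonicity formula for segregated configurations (vanishing of the derivative of $\tilde N$ forces $r\pa_r u_{i,\infty}=d\,u_{i,\infty}$) one concludes that $\mf{u}_\infty$ is homogeneous of degree $d$, hence of the form $r^d(g_1(\theta),\dots,g_k(\theta))$. Splitting $E(\mf{u}_R,0,r)=\tilde E(\mf{u}_R,0,r)+\beta_R\int_{B_r}\!\sum_{i<j}u_{i,R}^2u_{j,R}^2/r^{N-2}$ and matching limits (the total energy converges to $d\cdot H(\mf{u}_\infty,0,r)=\tilde E(\mf{u}_\infty,0,r)$) forces the interaction term to vanish in $L^1_{\loc}$.

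For $N=2$ I would exploit that each $g_i$ vanishes on $\pa\omega_i$ (where $\omega_i\subset\S^1$ is the support of $g_i$ on the unit circle) and, being the angular trace of a positive degree-$d$ homogeneous harmonic function on the sector $\{r>0,\theta\in\omega_i\}$, is a first Dirichlet eigenfunction of $-\pa_\theta^2$ on $\omega_i$. In dimension two this gives the characteristic relation $d^2=\lambda_1(\omega_i)=(\pi/|\omega_i|)^2$, whence $|\omega_i|=\pi/d$ for every non-empty $g_i$; summing over adjacent pieces (and using that the harmonic-difference property across interfaces forces $g_i$ and $-g_j$ to be the continuations of a single sinusoidal eigenfunction on $\omega_i\cup\omega_j$) shows that on any pair of consecutive arcs the function $g_i-g_{j}$ coincides with a multiple of $\sin(d(\theta-\theta_0))$. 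Concatenating around $\S^1$ yields $2\pi=(2d)\cdot(\pi/d)$ with $2d$ arcs, and the compatibility of signs forces $2d\in\N$, i.e.\ $d\in\tfrac12\N$. Up to a rotation the resulting angular profile is precisely $|\sin(d\theta)|/\sqrt{\pi}$ distributed over $2d$ sectors, and the $k$-partition of these sectors into the components $A_1,\dots,A_k$ must consist of unions of \emph{non-adjacent} sectors, otherwise the segregation condition $g_ig_j\equiv 0$ for $i\neq j$ would collapse two neighbouring sectors into the same index.

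The main obstacle I anticipate is the uniform regularity step: transferring the uniform Almgren doubling into genuine uniform Hölder bounds for $\mf{u}_R$ as $\beta_R\to\infty$, while simultaneously passing the interface-regularity estimates (needed to obtain that $u_{i,\infty}-u_{j,\infty}$ is harmonic across the interface, and not merely across the regular part) to the limit. Once this is in place the homogeneity/classification steps are rigidity consequences of the monotonicity formulae; the delicate two-dimensional endpoint is the check that the angular-ODE analysis is compatible with the global harmonic-difference condition, which is what pins $d$ to a half-integer and fixes the profile.
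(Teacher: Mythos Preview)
Your outline matches the paper's proof essentially step for step: Almgren monotonicity plus doubling to get uniform $L^\infty_{\loc}$ bounds on $\mf{u}_R$, the uniform-in-$\beta$ H\"older estimates of Noris--Tavares--Terracini--Verzini to extract a convergent subsequence, the Tavares--Terracini class $\mathcal{G}_{\loc}$ for the segregated limit (giving harmonicity of $u_{i,\infty}$ on $\{u_{i,\infty}>0\}$ and of $u_{i,\infty}-u_{j,\infty}$ across regular interfaces), the identity $\tilde N(\mf{u}_\infty,0,r)\equiv d$ forcing homogeneity, and for $N=2$ the angular ODE $g''+d^2g=0$ on each arc yielding $|\omega_i|=\pi/d$ and hence $2d\in\N$. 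The paper does exactly this, citing \cite{NoTaTeVe,Wa} for the H\"older bounds, \cite{TaTe} for the structure of the limit and the null-measure statement, and carrying out the explicit sine computation in dimension two.

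There is one small genuine gap in your $N=2$ argument. Your justification for why each $A_i$ is a union of \emph{non-adjacent} sectors (``otherwise the segregation condition $g_ig_j\equiv 0$ would collapse two neighbouring sectors into the same index'') is not correct: the segregation condition only separates \emph{different} components and says nothing about whether a single $u_{i,\infty}$ can occupy two adjacent sectors. What actually rules this out is the absence of multiplicity-$1$ points on the free boundary of the segregation limit: if two adjacent sectors both belonged to $\{u_{i,\infty}>0\}$, the ray separating them would consist of points where only the $i$-th component is positive in a neighbourhood yet $u_{i,\infty}$ vanishes there, contradicting the clean-up results for limits of strongly competing systems (the paper cites Dancer--Wang--Zhang \cite{DaWaZh}, Section~10). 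So you should replace your last sentence by an appeal to this regularity fact rather than to segregation.
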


\begin{remark}
1) The same result holds for blow-down sequences centred at $x_0 \neq 0$. \\
2) By Proposition 5.2 in \cite{BeTeWaWe} (reported in Subsection \ref{sub: Almgren}), it follows that the limit $N(\mf{u},0,+\infty)$ always exists. Moreover, it is finite if and only if $\mf{u}$ has algebraic growth. 
\end{remark}

Theorem \ref{thm blow down 2 comp} with $N=2$ is a particular case of Theorem \ref{them: blow down k dim 2}; note that when $k$ is odd we have to take into account the possibility that the homogeneity degree of the limiting profile is a half-integer; this is coherent with Theorem \ref{thm: ex tante comp}. Let us also observe that, when $k$ is odd, $\Psi_d$ does not define a harmonic function in $\R^2$ in polar coordinates, since it is not $2\pi$-periodic in $\theta$; it can be seen as a harmonic function in the double covering $\{r \ge 0, 0 \le \theta < 4\pi\}$. 

The blow-down theorem will be the starting point in the derivation of the desired classification results. To this aim, we emphasize the relation between the growth rate of a solution and its Almgren frequency function.

\begin{proposition}\label{prop: sui growth rates}
Let $\mf{u}$ be a nonnegative solution of \eqref{system} having algebraic growth. Then $d:= N(\mf{u},0,+\infty) \in (0,+\infty)$ is the growth rate of $\mf{u}$.
\end{proposition}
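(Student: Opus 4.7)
The plan is to convert the hypothesis $N(\mathbf{u},0,r)\to d$ into a sharp two-sided polynomial bound on $H(\mathbf{u},0,r)$, which then translates directly into the growth-rate statement. First I would differentiate $H(\mathbf{u},0,r)=\int_{\pa B_1}\sum_i u_i^2(r\theta)\,d\sigma$ in $r$ and apply Green's identity together with $\Delta u_i=\sum_{j\ne i}u_i u_j^2$ to obtain
\[
H'(r)=\frac{2}{r^{N-1}}\int_{B_r}\Big(\sum_i|\nabla u_i|^2+2\sum_{i<j}u_i^2 u_j^2\Big).
\]
Writing $E_2(\mathbf{u},0,r):=r^{-(N-2)}\int_{B_r}\sum_{i<j}u_i^2 u_j^2$ for the interaction part of $E$, this rearranges into the log-derivative identity
\[
\frac{d}{dr}\log H(\mathbf{u},0,r)=\frac{2\,N(\mathbf{u},0,r)}{r}+\frac{2}{r}\cdot\frac{E_2(\mathbf{u},0,r)}{H(\mathbf{u},0,r)}.
\]

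The next step is to show the additive correction is $o(1/r)$. Because $\mathbf{u}$ has algebraic growth, the remark after Theorem \ref{them: blow down k dim 2} guarantees that $d:=N(\mathbf{u},0,+\infty)$ is finite, so Theorem \ref{them: blow down k dim 2} applies. Its fifth bullet states that along the blow-down subsequence $H(R)\,R^{2}\sum_{i<j}u_{i,R}^2 u_{j,R}^2\to 0$ in $L^1_{\loc}(\R^N)$; a direct change of variables $y=Rx$ identifies the integral of this quantity over $B_1$ with precisely $E_2(R)/H(R)$. Hence $E_2(R_n)/H(R_n)\to 0$ along that subsequence, and since every sequence $R_n\to+\infty$ admits a further subsequence to which Theorem \ref{them: blow down k dim 2} applies and the limit value $0$ is independent of the subsequence, the convergence promotes to the full family: $E_2(r)/H(r)\to 0$ as $r\to+\infty$.

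Combining the two inputs, for every $\eps>0$ there exists $r_\eps$ such that
\[
\frac{2d-\eps}{r}\le\frac{H'(r)}{H(r)}\le\frac{2d+\eps}{r}\qquad\text{on }[r_\eps,+\infty).
\]
Integration between $r_\eps$ and $R$ followed by exponentiation yields two-sided bounds $C_1 R^{2d-\eps}\le H(\mathbf{u},0,R)\le C_2 R^{2d+\eps}$. Choosing $\eps<|d-d'|$ in each case gives $H(R)/R^{2d'}\to+\infty$ when $d'<d$ and $H(R)/R^{2d'}\to 0$ when $d'>d$, which is exactly the defining property of growth rate $d$ in the sense given in the introduction.

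Finally, to obtain $d>0$ I would invoke Almgren's monotonicity (Proposition 5.2 of \cite{BeTeWaWe}, recalled in the remark above): $r\mapsto N(\mathbf{u},0,r)$ is non-decreasing, so $d=0$ would force $N\equiv 0$ and therefore $E\equiv 0$, making each $u_i$ a constant with $u_i u_j\equiv 0$ for $i\ne j$; this is incompatible with the assumption that at least two components are non-trivial (they would then be strictly positive by the strong maximum principle). The only genuinely subtle step in this plan is the upgrade from the subsequential blow-down in Theorem \ref{them: blow down k dim 2} to the full-family convergence $E_2/H\to 0$; once that is settled the remainder is a routine ODE integration.
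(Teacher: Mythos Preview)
Your argument is correct, but it takes a considerably heavier route than the paper does. The paper's proof uses only the doubling properties of Proposition~\ref{prop: doubling}: from $N(\mathbf{u},0,r)\le d$ for all $r$, part~(ii) gives $H(r)\le C r^{2d}$ and hence $H(r)/r^{2d'}\to 0$ when $d'>d$; and for $d'<d$ the monotonicity of $N$ allows one to fix $\bar r$ with $N(\mathbf{u},0,\bar r)=d'+\eps$, after which part~(i) yields $H(r)\ge C r^{2(d'+\eps)}$ and hence $H(r)/r^{2d'}\to +\infty$. No appeal to Theorem~\ref{them: blow down k dim 2} is needed. Your approach, by contrast, invokes the full blow-down theorem---and hence the compactness machinery of \cite{NoTaTeVe,TaTe,Wa}---solely to establish that the interaction correction $E_2(r)/H(r)$ tends to zero, which is true but far deeper than what the statement requires. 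What your route buys is the sharper asymptotic $(\log H)'(r)=2d/r+o(1/r)$ rather than merely two-sided polynomial bounds; what the paper's route buys is that Proposition~\ref{prop: sui growth rates} becomes an elementary consequence of the Almgren monotonicity, logically prior to and independent of the blow-down analysis in Section~\ref{sec: blow.down}.
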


This proposition implies that any solution of \eqref{system} having algebraic growth has a growth rate $d:= N(\mf{u},0,+\infty)$. The strategy we shall adopt to prove our Liouville-type theorems rests on the idea that $d$ characterizes the asymptotic profile of the solution by means of Theorem \ref{them: blow down k dim 2}: in particular, the value $d$ characterizes the maximal number of non-trivial components for a limiting profile, which hopefully should coincide with the maximal number of non-trivial components of the ``original" solution. In this perspective, the main difficulty is represented by the lack of uniqueness of the asymptotic profile (the convergence in Theorem \ref{them: blow down k dim 2} takes place only up to a subsequence), and in general by the difficulty in deriving rigorous information on the ``original" solution starting from the knowledge of the blow-down limit (we refer the interested reader to \cite{FaSo} and \cite{Wa}, where these problems are sources of tremendous complications). We can overcome these obstructions by means of the following intermediate result, which holds in any dimension.

\begin{proposition}\label{thm: zero total}
Let $N \ge 2$, and let $\mf{u}$ be a nonnegative solution of \eqref{system} having algebraic growth. Let us assume that there exists a sequence $R_n \to +\infty$ as $n \to \infty$, such that $u_{i,R_n} \to 0$ in $\mathcal{C}^0_{\loc}(\R^N)$ for some $i$. Then $u_i \equiv 0$ in $\R^N$. 
\end{proposition}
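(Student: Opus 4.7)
I plan to argue by contradiction. Assume $u_i \not\equiv 0$; by the strong maximum principle $u_i > 0$ throughout $\R^N$. The strategy is a two-scale blow-down: the first (provided by Theorem~\ref{them: blow down k dim 2}) locates where the other components concentrate, while a second blow-down adapted to $u_i$ alone produces a limit which must both vanish a.e.\ on $\partial B_1$ and carry unit $L^2$ norm there.

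Concretely, I would first extract from Theorem~\ref{them: blow down k dim 2} (along a subsequence of $\{R_n\}$, not relabelled) a blow-down limit $\mf{u}_{R_n} \to \mf{u}_\infty = r^d \mf g(\theta)$ in $\mathcal{C}^0_{loc}(\R^N) \cap H^1_{loc}(\R^N)$, with $d = N(\mf{u},0,+\infty) > 1$, pairwise disjoint conical positivity sets $\Omega_l := \{u_{l,\infty} > 0\}$, and $\{\mf{u}_\infty = \mf 0\}\cap\partial B_1$ of zero $(N-1)$-Hausdorff measure. The hypothesis forces $u_{i,\infty}\equiv 0$, so the family $\{\Omega_l\}_{l\neq i}$ covers $\R^N$ up to a set of zero $N$-dimensional measure.

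The second rescaling is tailored to $u_i$ alone: set $\hat H_i(r) := r^{1-N}\int_{\partial B_r} u_i^2$ and define $\tilde u_i^{\,n}(x) := u_i(R_n x)/\sqrt{\hat H_i(R_n)}$, so that $\int_{\partial B_1}(\tilde u_i^{\,n})^2 = 1$ by construction. From the Almgren-type monotonicity formula for the single component (derived from $-\Delta u_i = -u_i \sum_{l\neq i}u_l^2$), the quantity
\[
\hat N_i(r) := \frac{r \int_{B_r}\bigl(|\nabla u_i|^2 + u_i^2 \sum_{l\neq i}u_l^2\bigr)}{\int_{\partial B_r} u_i^2}
\]
is monotone non-decreasing, and the algebraic growth of $\mf{u}$ forces $\hat N_i(r) \to d_i \in (0, d]$. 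Scaling back yields $\int_{B_1}|\nabla \tilde u_i^{\,n}|^2 \le \hat N_i(R_n) \le d$, while the subharmonicity of $u_i^2$ gives $\hat H_i(\rho R_n) \le \hat H_i(R_n)$ for $\rho \le 1$, hence $\int_{B_1}(\tilde u_i^{\,n})^2 \le 1/N$. Thus $\{\tilde u_i^{\,n}\}$ is bounded in $H^1(B_1)$; an analogous doubling bound coming from $\hat N_i \le d$, together with the sub-mean-value inequality for the subharmonic $\tilde u_i^{\,n}$, yields a uniform $L^\infty$ bound on $B_{3/2}$.

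On the other hand, $\tilde u_i^{\,n}$ satisfies $\Delta \tilde u_i^{\,n} = \beta_n V_{R_n}\tilde u_i^{\,n}$ with $\beta_n := R_n^{2}\,H(\mf u,0,R_n) \to +\infty$ and $V_{R_n} = \sum_{l\neq i} u_{l,R_n}^{2} \to V_\infty$ uniformly on compact sets. On any compact $K \subset \Omega_l$ ($l\neq i$) the uniform convergence gives $V_{R_n} \ge c > 0$; a Helmholtz-type barrier comparison with a super-solution built from $\cosh\!\bigl(\sqrt{c\beta_n}\,\mathrm{dist}(\cdot,\partial K)\bigr)$, combined with the uniform $L^\infty$ bound on $B_{3/2}$, forces $\tilde u_i^{\,n}(x) \le C\,e^{-c'\sqrt{\beta_n}\,\mathrm{dist}(x,\partial K)}$ inside $K$, so that $\tilde u_i^{\,n} \to 0$ super-exponentially on compact subsets of each $\Omega_l$. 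Extracting a weakly $H^1(B_1)$-convergent subsequence and invoking Rellich compactness together with continuity of the trace map, $\tilde u_i^{\,n} \to \tilde u_i^{\infty}$ with $\|\tilde u_i^{\infty}\|_{L^2(\partial B_1)} = 1$; however, the exponential decay combined with the covering property of $\{\Omega_l\}_{l\neq i}$ forces $\tilde u_i^{\infty} \equiv 0$ a.e.\ on $\partial B_1$, a contradiction. The main obstacle I anticipate is coupling the Almgren control (which provides $H^1$ boundedness on $B_1$) with the Helmholtz barrier (which needs $L^\infty$ control slightly beyond $B_1$): the doubling estimate $\hat H_i(2R_n)/\hat H_i(R_n) \le 4^d$ is what makes this interplay work, and once it is in place the contradiction is immediate.
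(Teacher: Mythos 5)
Your overall strategy -- blowing down $u_i$ with its own $L^2(\partial B_1)$ normalization and using the barrier Lemma~\ref{lem: exp decay} to force the limit to vanish a.e.\ on $\partial B_1$ while keeping its norm equal to $1$ -- captures the heart of why the proposition is true, and shares with the paper the key mechanism (exponential decay of $u_i$ inside the cones where the surviving components dominate, together with the null measure of $\{\mf{u}_\infty=\mf{0}\}\cap\partial B_1$). However, the compactness scaffolding you build around it has a genuine gap.

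The crucial step you invoke but do not (and, I believe, cannot) justify is the claim that the single-component Almgren quotient
\[
\hat N_i(r) = \frac{r\int_{B_r}\bigl(|\nabla u_i|^2 + u_i^2\sum_{l\neq i}u_l^2\bigr)}{\int_{\partial B_r}u_i^2}
\]
is monotone non-decreasing. The Rellich--Pohozaev computation that underlies the Almgren monotonicity of Proposition~\ref{prop: monotonicity formula} works precisely because the \emph{system} has a symmetric variational structure, which makes the error term $\sum_i u_i V_i(x\cdot\nabla u_i)$ rewrite as a pure divergence. If you single out one component, the error term is $\tfrac12\int_{B_r}u_i^2\,(x\cdot\nabla V_i)$ with $V_i = \sum_{l\neq i}u_l^2$, and this has no sign; the monotonicity fails. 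Everything downstream in your argument -- the bound $\int_{B_1}|\nabla\tilde u_i^{\,n}|^2\le d$, the doubling estimate $\hat H_i(2R_n)/\hat H_i(R_n)\le 4^d$, the uniform $L^\infty$ bound on $B_{3/2}$, and hence the dominated convergence / trace-compactness step that lets you pass from pointwise decay to $\int_{\partial B_1}(\tilde u_i^{\,n})^2\to 0$ -- depends on this unproven monotonicity (or its consequence, the doubling). Without a uniform bound for $\tilde u_i^{\,n}$ near $\partial B_1$, the pointwise decay on compacts of $\bigcup_{l\neq i}\Omega_l$ does not pass to an $L^2(\partial B_1)$ statement: the algebraic growth only gives $\tilde u_i^{\,n}\le C(1+R_n^{d}|x|^{d})$, which is not uniformly integrable near the (shrinking but nonempty) collar around the nodal set. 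Also, a minor but real point: continuity of the trace map does not upgrade weak $H^1(B_1)$ convergence to strong $L^2(\partial B_1)$ convergence; one needs $H^1$ control on a strictly larger ball, which again would follow from doubling.

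The paper circumvents exactly this difficulty by abandoning the normalized blow-down of $u_i$ as the main object and working instead with an Alt--Caffarelli--Friedman-type weighted energy $J(r)=\int_{B_r}f(|x|)\bigl(|\nabla u_i|^2+u_i^2\sum_{l\neq i}u_l^2\bigr)$. Lemma~\ref{lem: step 1 monotonicity} gives a differential inequality $J'/J\ge \bigl(2\gamma(\Lambda(r))-\beta\bigr)/r$ without any Pohozaev identity, so one only needs to show that the angular Rayleigh quotient $\Lambda(r)$ is eventually large. That, in turn, is proved by contradiction: if $\Lambda(R_n)$ were bounded, the normalized traces $v_{i,R_n}|_{\partial B_1}$ would be bounded in $H^1(\partial B_1)$, hence (by Rellich on the sphere) precompact in $L^2(\partial B_1)$; combining this with the barrier decay (which, crucially, only needs a polynomial bound $v_{i,R_n}\le C(1+R_n^d)$ because the decay $e^{-cR_n}$ beats any polynomial) and the measure estimate of Lemma~\ref{lem: misura a zero}, the limiting trace is supported on a set of arbitrarily small measure, giving a large first eigenvalue and contradicting boundedness of $\Lambda$. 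Once $\Lambda(r)$ is large, $J$ grows at least like $r^\beta$ for some $\beta>2d$, while the Caccioppoli estimate (your final cutoff computation is essentially what appears in \eqref{stima 2}) bounds $J(r)\le Cr^{2d}$, a contradiction. So if you want to repair your proof you should either establish the single-component doubling by a separate argument, or replace the dominated-convergence step on $\partial B_1$ by an $H^1(\partial B_1)$ compactness argument that only activates under the contradictory hypothesis, as the paper does.
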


Thanks to Proposition \ref{thm: zero total}, we prove the Liouville-type Theorem \ref{protothm 1} in dimension $2$. In terms of the Almgren frequency function, it can be re-phrased as follows.

\begin{theorem}\label{thm: Liouville dim 2}
Let $N=2$, $k \ge 2$, and let $\mf{u}=(u_1,\dots,u_k)$ be a nonnegative solution of \eqref{system} such that $N(\mf{u},0,+\infty) =: d \in (0,+\infty)$. Then at most $2d$ components of $\mf{u}$ do not vanish identically. \\
Equivalently, let $N=2$, $k \ge 2$, and let $\mf{u}=(u_1,\dots,u_k)$ be a positive solution of \eqref{system} such that $N(\mf{u},0,+\infty) =: d \in (0,+\infty)$. Then $d \ge k/2$.
\end{theorem}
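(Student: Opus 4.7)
The plan is to combine the two-dimensional blow-down description in Theorem \ref{them: blow down k dim 2} with the rigidity statement of Proposition \ref{thm: zero total}. Concretely, starting from a nonnegative solution $\mf{u}$ with $N(\mf{u},0,+\infty)=d\in(0,+\infty)$, Theorem \ref{them: blow down k dim 2} yields a subsequence $R_n\to+\infty$ along which
\[
\mf{u}_{R_n}\to \mf{u}_\infty=\bigl(\chi_{A_1},\dots,\chi_{A_k}\bigr)|\Psi_d|
\qquad\text{in } \mathcal{C}^0_{\loc}(\R^2)\cap H^1_{\loc}(\R^2),
\]
up to a rotation, where $(A_1,\dots,A_k)$ is a partition of $\Sigma_{|\Psi_d|}=\{|\Psi_d|>0\}$ into unions of non-adjacent nodal domains. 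The strategy is to bound the number of non-empty sets of such a partition by counting the nodal domains of $|\Psi_d|$, and then to lift this bound back to the original solution $\mf{u}$ via Proposition \ref{thm: zero total}.

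The counting step is elementary. Since $\Psi_d(r,\theta)=\pi^{-1/2}r^d\sin(d\theta)$ and $2d$ is a positive integer (being either an integer or a half-integer times two, by Theorem \ref{them: blow down k dim 2}), the zeros of $\theta\mapsto\sin(d\theta)$ on $[0,2\pi)$ are exactly $\theta_j=j\pi/d$ for $j=0,1,\dots,2d-1$. They split $\R^2\setminus\{0\}$ into exactly $2d$ open circular sectors of opening $\pi/d$, which are precisely the nodal domains of $\Sigma_{|\Psi_d|}$. Since $(A_1,\dots,A_k)$ partitions this family of $2d$ nodal domains, at most $2d$ of the sets $A_i$ can be non-empty; equivalently, at most $2d$ components $u_{i,\infty}$ of the blow-down limit are not identically zero.

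Finally, I convert this statement about $\mf{u}_\infty$ into a statement about $\mf{u}$. Whenever $A_i=\emptyset$, the limit component $u_{i,\infty}=\chi_{A_i}|\Psi_d|$ vanishes identically, and the $\mathcal{C}^0_{\loc}$-convergence gives $u_{i,R_n}\to 0$ locally uniformly in $\R^2$. Proposition \ref{thm: zero total} then forces $u_i\equiv 0$ in $\R^2$. Therefore the number of components of $\mf{u}$ which are not identically zero is bounded above by the number of non-empty $A_i$'s, hence by $2d$. The equivalent formulation follows at once: if all $k$ components are positive, none is identically zero, so $k\le 2d$, i.e.\ $d\ge k/2$.

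I do not expect a serious obstacle: the content of the theorem is essentially already packaged into Theorem \ref{them: blow down k dim 2} and Proposition \ref{thm: zero total}. The one point that requires a little care is that the bound $2d$ is genuinely an integer even when $d$ is a half-integer, so that "at most $2d$ components" is a meaningful statement; this is ensured by the dichotomy (integer or half-integer) for $d$ in the two-dimensional case stated in Theorem \ref{them: blow down k dim 2}.
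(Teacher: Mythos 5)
Your proposal is correct and follows essentially the same route as the paper's proof: apply Theorem \ref{them: blow down k dim 2} to extract a blow-down limit of the form $(\chi_{A_1},\dots,\chi_{A_k})|\Psi_d|$, observe that $|\Psi_d|$ has exactly $2d$ nodal domains so at most $2d$ of the $A_i$ can be non-empty, and then invoke Proposition \ref{thm: zero total} to transfer the bound from the limit back to $\mf{u}$. The only cosmetic difference is that the paper argues from the ``positive solution'' formulation (concluding $\chi_{A_i}\neq 0$ for all $i$ and hence $k\le 2d$), whereas you argue from the ``nonnegative'' formulation via the contrapositive; both are logically equivalent.
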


In light of Theorem \ref{thm: ex tante comp}, the result gives a sharp estimate in dimension $2$ on the minimal admissible growth rates for positive solutions of \eqref{system} with a given number of components. The higher dimensional case is more involved, reflecting the impossibility of deriving a complete description of the admissible limiting profile for solutions of \eqref{system}, see Theorem \ref{them: blow down k dim 2}.  In connection to this, we point out that all the existence results available in the literature have been achieved in dimension $2$ (thus leading to $2$-dimensional solutions of \eqref{system} in any dimension $N \ge 2$); so far it is still unknown if true $N$-dimensional solutions of \eqref{system} with $N \ge 3$ exist and can exhibit different asymptotic behaviour with respect to the $2$-dimensional case. Writing ``true $N$-dimensional solutions" we refer to solutions in dimension $N$ which cannot be obtained by solutions in dimension $N-1$ adding the dependence on $1$ variable (up to a rotation). Nevertheless even in higher dimension not all is lost: by means of Theorem \ref{them: blow down k dim 2} and Proposition \ref{thm: zero total} we can relate the maximal number of nontrivial components of a solution of \eqref{system} having a prescribed growth with the solution of an optimal partition problem for the unitary sphere $\S^{N-1}$. 

\begin{definition}
Let $1 \le k \in \N$. A \emph{$k$-partition} (or, simply, \emph{partition}) of $\S^{N-1}$ is a family $\omega=(\omega_1,\dots,\omega_k)$ of mutually disjoint open and connected subsets $\omega_i \subset \S^{N-1}$. We denote the class of the $k$-partition of $\S^{N-1}$ as $\mathcal{P}_k(\S^{N-1})$.
\end{definition}

We define 
\begin{align}
\mathcal{L}_k(\S^{N-1}) := \inf_{\omega \in \mathcal{P}_k(\S^{N-1})} \max_{i=1,\dots,k} \lambda_1(\omega_i) \label{def di L_k} \\
\gamma(t) := \sqrt{\left(\frac{N-2}{2}\right)^2+t}-\left(\frac{N-2}{2}\right). \label{def di gamma}
\end{align}
Note that $\gamma$ is monotone increasing and is such that $\gamma(t) \to +\infty$ as $t \to +\infty$. The following is a more convenient statement of Theorem \ref{protothm 1} holding in any dimension. 

\begin{theorem}\label{thm: higher dim}
Let $N,k \ge 2$, and let $\mf{u}=(u_1,\dots,u_k)$ be a nonnegative solution of \eqref{system} such that $N(\mf{u},0,+\infty) =: d \in (0,+\infty)$. If $m$ is the maximal positive integer such that $\gamma(\mathcal{L}_m(\S^{N-1})) \le d$, then at most $m$ components of $\mf{u}$ do not vanish identically. \\
Equivalently, let $N,k \ge 2$, and let $\mf{u}$ be a positive solution of \eqref{system} such that $N(\mf{u},0,+\infty) =: d \in (0,+\infty)$. Then $d \ge \gamma(\mathcal{L}_k(\S^{N-1}))$.
\end{theorem}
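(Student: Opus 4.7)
The plan is to extract from the blow-down of $\mathbf{u}$ a partition of $\S^{N-1}$ witnessing $\mathcal{L}_k(\S^{N-1}) \le d(d+N-2)$, which, after applying $\gamma$, yields $d \ge \gamma(\mathcal{L}_k(\S^{N-1}))$. The two formulations of the theorem are equivalent: by Lemma \ref{lem: monot part}, $k \mapsto \mathcal{L}_k(\S^{N-1})$ is increasing, and $\gamma$ is monotone increasing; writing $k'$ for the number of nontrivial components of $\mathbf{u}$, the positive-case inequality $d \ge \gamma(\mathcal{L}_{k'}(\S^{N-1}))$ combined with the defining property of $m$ forces $k' \le m$. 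Hence I focus on the positive case: assuming all $k$ components of $\mathbf{u}$ are strictly positive, I aim to show $d \ge \gamma(\mathcal{L}_k(\S^{N-1}))$.

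The first step is to apply Theorem \ref{them: blow down k dim 2} to obtain a sequence $R_n \to +\infty$ such that $\mathbf{u}_{R_n} \to \mathbf{u}_\infty = r^d(g_1(\theta),\dots,g_k(\theta))$ in $\mathcal{C}^0_{\loc}(\R^N) \cap H^1_{\loc}(\R^N)$, with the $u_{i,\infty} = r^d g_i$ having pairwise disjoint positivity domains, each harmonic on its positivity domain, and $\{\mathbf{u}_\infty = \mathbf{0}\} \cap \partial B_1$ having null $(N-1)$-dimensional measure. The pivotal observation is that, by Proposition \ref{thm: zero total}, no component can be lost in the blow-down: since every $u_i$ is strictly positive, no subsequence of $(u_{i,R_n})$ can converge to $0$ in $\mathcal{C}^0_{\loc}$, so $g_i \not\equiv 0$ for every $i=1,\dots,k$.

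The next step is to translate this limit into a spectral partition of the sphere. For each $i$, set $\omega_i := \{\theta \in \S^{N-1} : g_i(\theta) > 0\}$; by continuity these are nonempty, mutually disjoint, open subsets of $\S^{N-1}$, whose union has full $\mathcal{H}^{N-1}$-measure. Writing the Euclidean Laplacian in polar coordinates gives
\[
0 = \Delta\bigl(r^d g_i(\theta)\bigr) = r^{d-2}\bigl(d(d+N-2)\, g_i(\theta) + \Delta_{\S^{N-1}} g_i(\theta)\bigr)
\]
on the open cone over $\omega_i$, so $g_i$ is a strictly positive Dirichlet eigenfunction of $-\Delta_{\S^{N-1}}$ on $\omega_i$ with eigenvalue $d(d+N-2)$. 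Restricting $g_i$ to any connected component $\omega_i^{(\ell)}$ of $\omega_i$ still yields a strictly positive Dirichlet eigenfunction, and by the characterization of the first eigenfunction one obtains $\lambda_1\bigl(\omega_i^{(\ell)}\bigr) = d(d+N-2)$ on every such component.

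Finally, enumerating all connected components $\omega_i^{(\ell)}$ produces an element of $\mathcal{P}_M(\S^{N-1})$ for some $M \ge k$, each of whose pieces has first Dirichlet eigenvalue exactly $d(d+N-2)$. By definition of $\mathcal{L}_M(\S^{N-1})$ and the monotonicity supplied by Lemma \ref{lem: monot part},
\[
\mathcal{L}_k(\S^{N-1}) \le \mathcal{L}_M(\S^{N-1}) \le d(d+N-2),
\]
and applying $\gamma$ gives $\gamma(\mathcal{L}_k(\S^{N-1})) \le d$, the claim. The main obstacle is structural: everything hinges on Proposition \ref{thm: zero total}, which upgrades the merely subsequential blow-down convergence into genuine information about $\mathbf{u}$ by forbidding any component from vanishing in the limit. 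Without this non-degeneracy one could only produce an $M$-partition with $M$ equal to the (possibly smaller) number of surviving components, which would not bound $\mathcal{L}_k(\S^{N-1})$.
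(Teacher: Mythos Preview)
Your proof is correct and follows essentially the same approach as the paper: blow down via Theorem~\ref{them: blow down k dim 2}, invoke Proposition~\ref{thm: zero total} to ensure each $g_i \not\equiv 0$, recognize each connected component of $\{g_i>0\}$ as a domain on which $g_i$ is a positive Dirichlet eigenfunction with eigenvalue $d(d+N-2)$, collect all such components into a $\tilde k$-partition with $\tilde k \ge k$, and conclude using Lemma~\ref{lem: monot part}. Your explicit discussion of the equivalence between the two formulations and your emphasis on the role of Proposition~\ref{thm: zero total} are also in line with the paper's treatment.
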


\begin{remark}\label{rem: geometric connection}
It is worth to observe that the connection between positive solutions with minimal growth of system \eqref{system} and the optimal partition problem $\mathcal{L}_k(\S^{N-1})$ goes beyond the relation $d \ge \gamma(\mathcal{L}_k(\S^{N-1}))$. Indeed, in the proof of Theorem \ref{thm: higher dim}, we will show that if there exists a solution $\mf{u}$ of \eqref{system} having the minimal admissible growth $d= \gamma(\mathcal{L}_k(\S^{N-1}))$, then $\mf{u}$ is asymptotic to an optimal partition for $\mathcal{L}_k(\S^{N-1})$, in the following sense: given any blow-down limit $\mf{u}_\infty$ of $\mf{u}$, let $\omega_i:= \supp g_i \subset \S^{N-1}$, where $g_i$ is defined in Theorem 1.4. Then $(\omega_1,\dots,\omega_k)$ is an optimal partition for $\mathcal{L}_k(\S^{N-1})$.
\end{remark}

Theorem \ref{thm: higher dim} is the base point for the proof of Corollary \ref{protothm 22}. Again, we think that the following re-formulation is more suited to describe our result.

\begin{corollary}\label{protothm 2}
Let $N,k \ge 2$, and let $\mf{u}=(u_1,\dots,u_k)$ be a nonnegative solution of \eqref{system} such that $N(\mf{u},0,+\infty)=:d \in (0,+\infty)$. Then either $d=1$ or $d \ge 3/2$. Furthermore: \begin{itemize}
\item[($i$)] if $d=1$, then $\mf{u}$ has exactly $2$ non-trivial components; 
\item[($ii$)] if $d=3/2$, then $\mf{u}$ has exactly $3$ non-trivial components.
\end{itemize}
\end{corollary}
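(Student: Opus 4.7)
The plan is to combine the lower bound of Theorem \ref{thm: higher dim} with three ingredients: the explicit values $\gamma(\mathcal{L}_2(\S^{N-1}))=1$ and $\gamma(\mathcal{L}_3(\S^{N-1}))=3/2$; the strict monotonicity of $k\mapsto \gamma(\mathcal{L}_k(\S^{N-1}))$ coming from Lemma \ref{lem: monot part}; and the integrality of $d$ for two-component solutions asserted in Theorem \ref{thm blow down 2 comp}.

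First I would record the arithmetic. The first Dirichlet eigenfunction on a hemisphere of $\S^{N-1}$ is the restriction of a linear function on $\R^N$, so $\mathcal{L}_2(\S^{N-1})=N-1$ (realized by two opposite hemispheres), and a one-line algebraic simplification gives
\[
\gamma(N-1)=\sqrt{\tfrac{(N-2)^2}{4}+(N-1)}-\tfrac{N-2}{2}=\tfrac{N}{2}-\tfrac{N-2}{2}=1.
\]
The value $\gamma(\mathcal{L}_3(\S^{N-1}))=3/2$, valid for every $N \ge 2$, is the substantive new input: it rests on the computation of $\mathcal{L}_3(\S^{N-1})$ carried out separately in the paper (announced in the discussion following Theorem \ref{protothm 1}) together with the same algebraic simplification. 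By Lemma \ref{lem: monot part} the sequence $k\mapsto \gamma(\mathcal{L}_k(\S^{N-1}))$ is strictly increasing, hence $\gamma(\mathcal{L}_m(\S^{N-1})) > 3/2$ for every $m \ge 4$.

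Now let $k_0 \ge 2$ be the number of non-trivial components of $\mf{u}$. Since \eqref{system} restricted to any subset of indices is again a system of the same form, $\mf{u}$ may be regarded as a positive solution of the $k_0$-component system, and Theorem \ref{thm: higher dim} then yields $d \ge \gamma(\mathcal{L}_{k_0}(\S^{N-1}))$. I would then split on the value of $d$. If $d<3/2$, the strict monotonicity forces $k_0\le 2$, hence $k_0=2$; but Theorem \ref{thm blow down 2 comp} forces $d$ to be a positive integer, and combined with $d \ge \gamma(\mathcal{L}_2(\S^{N-1}))=1$ this leaves only $d=1$, proving (i). If $d=3/2$, the same inequality gives $k_0 \le 3$, while $k_0=2$ is excluded because $3/2$ is not an integer, so $k_0=3$, proving (ii). These two cases together also rule out $d \in (0,1) \cup (1, 3/2)$ and establish the dichotomy ``$d=1$ or $d\ge 3/2$''.

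The main obstacle is not the corollary itself but the input $\gamma(\mathcal{L}_3(\S^{N-1}))=3/2$ in dimension $N\ge 3$, which requires exhibiting an explicit minimizing $3$-partition of $\S^{N-1}$ and proving its optimality; this is handled in a separate section of the paper. Once this identification is available, the corollary follows by bookkeeping, with the integrality phenomenon of Theorem \ref{thm blow down 2 comp} in the two-component case providing the crucial ingredient that rules out the gap $(1, 3/2)$.
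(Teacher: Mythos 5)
Your proposal is correct and follows essentially the same route as the paper: both rest on Theorem~\ref{thm: higher dim}, on the identities $\gamma(\mathcal{L}_2(\S^{N-1}))=1$ and $\gamma(\mathcal{L}_3(\S^{N-1}))=3/2$ (the latter being Theorem~\ref{thm: on optimal partition}, whose proof via Lemma~\ref{lem: 26 marzo} carries the real weight), on the strict monotonicity of $k\mapsto\mathcal{L}_k(\S^{N-1})$ from Lemma~\ref{lem: monot part}, and on the integrality of $d$ for two-component solutions from Theorem~\ref{thm blow down 2 comp}. The only cosmetic difference is that you condition on the size of $d$ while the paper conditions on the number $k_0$ of non-trivial components, and you compute $\mathcal{L}_2(\S^{N-1})=N-1$ directly from the hemispheres whereas the paper (Remark~\ref{rem: su k=2}) passes through $\beta(2,N)=2$; both appeal to the same Sperner input.
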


The proof of point ($i$) is obtained as a particular case of a more general result (see Theorem \ref{thm: general systems} and Proposition \ref{prop: Almgren crescita puntuale}), while the second part and the jump in the admissible values of $N(\mf{u},0,+\infty)$ require a careful further analysis which can be carried on only for solutions of system \eqref{system}. We emphasize that, in light of the known existence results for system \eqref{system} with $k=2$ or $k=3$, Corollary \ref{protothm 2} is optimal in any dimension. Moreover, in proving point ($ii$) we can determine the optimal value $\mathcal{L}_3(\S^{N-1})$ for every $N$, partially extending the main result in \cite{HeHHTer2}.

\begin{theorem}\label{thm: on optimal partition}
In any dimension $N \ge 3$, it results that
\[
\mathcal{L}_3(\S^{N-1})= \frac{3}{2}\left(\frac{3}{2}+N-2\right),
\]
and an optimal partition is the extension in dimension $N$ of the so-called $\mf{Y}$-partition of $\S^{N-1}$. 
\end{theorem}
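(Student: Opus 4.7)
My plan proves the identity in two stages: an explicit construction of the $\mathbf{Y}$-partition yields the upper bound $\mathcal{L}_3(\S^{N-1}) \le \tfrac{3}{2}(\tfrac{3}{2}+N-2)$, and a rigidity argument at triple junctions of an optimal partition produces the matching lower bound. For the upper bound, fix the $(x_1,x_2)$-plane in $\R^N$ and write $z = x_1 + ix_2$; partition $\S^{N-1}$ (up to a set of measure zero) into three spherical wedges $\omega_j = \{x \in \S^{N-1} : \arg z \in (2\pi(j-1)/3,\, 2\pi j/3)\}$, $j=1,2,3$. The candidate first eigenfunction on $\omega_1$ is the restriction to the sphere of $\Psi(x) := \mathfrak{Im}((x_1+ix_2)^{3/2}) = (x_1^2+x_2^2)^{3/4}\sin(\tfrac{3}{2}\arg z)$. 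Since $\Psi$ depends only on $(x_1,x_2)$ and is harmonic in that plane, it is harmonic in $\R^N$; it is homogeneous of degree $3/2$, strictly positive on the cone over $\omega_1$, and vanishes on its two bounding half-hyperplanes $\{\arg z=0\}$ and $\{\arg z = 2\pi/3\}$. Therefore $\Psi|_{\S^{N-1}}$ is the positive first Dirichlet eigenfunction of $-\Delta_{\S^{N-1}}$ on $\omega_1$, with eigenvalue $d(d+N-2) = \tfrac{3}{2}(\tfrac{3}{2}+N-2)$ by the standard harmonic-polar relation. The other wedges are congruent, hence the upper bound.

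For the lower bound I would first invoke the existence (from \cite{HeHHTer1}) of an optimal $3$-partition $(\omega_1^*,\omega_2^*,\omega_3^*)$ achieving $\mathcal{L}_3(\S^{N-1})$. By equipartition and the Euler--Lagrange structure, $\lambda_1(\omega_i^*) = \mathcal{L}_3(\S^{N-1})$ for each $i$, and the positive first eigenfunctions $\varphi_i$ satisfy the segregation property together with the matching condition that every difference $\varphi_i - \varphi_j$ extends harmonically across the shared portion of $\partial\omega_i^* \cap \partial\omega_j^*$. Setting $d^* := \gamma(\mathcal{L}_3(\S^{N-1}))$ and extending by $h_i(r\theta) := r^{d^*}\varphi_i(\theta)$ to $\R^N$, I obtain a nontrivial solution $\mf{h} = (h_1,h_2,h_3)$ of \eqref{segregated pb}, homogeneous of degree $d^*$ about the origin. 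Since $\S^{N-1}$ is compact and connected, a minimizing $3$-partition must have all three regions pairwise adjacent (otherwise merging a non-adjacent pair would strictly decrease the maximum first eigenvalue), so there exists a triple junction $x_0 \in \partial\omega_1^* \cap \partial\omega_2^* \cap \partial\omega_3^* \subset \S^{N-1}$.

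To conclude, I would blow up $\mf{h}$ at $x_0$ along scales $r_n \to 0^+$. Along a subsequence the rescalings converge to a nontrivial tangent profile $\tilde{\mf{h}}$ which solves \eqref{segregated pb}, is homogeneous of degree $d_0 = \tilde N(\mf{h}, x_0, 0^+)$, and has all three components nontrivial since all three regions accumulate at $x_0$. Classification of such tangent objects forces $\tilde{\mf{h}}$ to be a cylindrical extension of the planar $\mathbf{Y}$-cone, so $d_0 = 3/2$. Because $\mf{h}$ is homogeneous of degree $d^*$ about $0$, the Almgren frequency at infinity from $x_0$ satisfies $\tilde N(\mf{h}, x_0, +\infty) = d^*$; the monotonicity of $\tilde N(\mf{h}, x_0, \cdot)$ then yields $3/2 = d_0 \le d^*$, i.e., $\mathcal{L}_3(\S^{N-1}) = d^*(d^*+N-2) \ge \tfrac{3}{2}(\tfrac{3}{2}+N-2)$, which matches the upper bound and shows the $\mathbf{Y}$-partition is optimal.

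The hard part will be the classification at the triple junction, i.e., ruling out tangent profiles of degree strictly less than $3/2$ that still carry three nontrivial components. This rests on the regularity theory for optimal partitions and segregated free-boundary problems (in the spirit of Caffarelli--Lin and Conti--Terracini--Verzini) adapted to \eqref{segregated pb}. A complementary, conditional route is to couple the same blow-up with an approximation by the regularized problem \eqref{regular pb} as $\beta \to +\infty$, transporting any hypothetical $3$-component segregated profile of degree below $3/2$ into a nontrivial solution of \eqref{system} with three nontrivial components and growth strictly below $3/2$, contradicting Corollary \ref{protothm 2}($ii$); this route establishes the present theorem jointly with that corollary, in line with the discussion preceding its statement.
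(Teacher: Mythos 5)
Your upper bound is correct and is essentially what the paper does: both rely on extending the $\mathbf{Y}$-partition of $\S^2$ (your explicit harmonic function $\mathfrak{Im}\bigl((x_1+ix_2)^{3/2}\bigr)$ restricted to $\S^{N-1}$ realizes this extension, and the identity $\lambda_1=d(d+N-2)$ with $d=3/2$ gives the number). The lower bound, however, contains genuine gaps, and the paper's route is fundamentally different.

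First, the existence of a triple junction $x_0\in\partial\omega_1^*\cap\partial\omega_2^*\cap\partial\omega_3^*$ is asserted but not established, and the supporting argument is flawed: if $\omega_1^*$ and $\omega_2^*$ are not adjacent, ``merging'' them produces a \emph{disconnected} set whose first Dirichlet eigenvalue equals $\min(\lambda_1(\omega_1^*),\lambda_1(\omega_2^*))$ --- which is still $\mathcal{L}_3(\S^{N-1})$ by equipartition --- and which in any case is not a valid element of a $k$-partition (connectedness is required in Definition 1.12). Even granting pairwise adjacency, deducing that some point lies in the closure of all three pieces is a nontrivial topological/regularity statement in dimension $N\ge 4$. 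Second, the step you label ``the hard part'' --- classification of segregated, homogeneous tangent profiles at a triple junction, forcing degree exactly $3/2$ --- is exactly where the mathematical content lives, and it is left unproved. Third, survival of all three components under the blow-up at $x_0$ (analogous to Proposition \ref{thm: zero total} for blow-downs) is asserted rather than argued. Finally, your alternative ``conditional route'' via Corollary \ref{protothm 2}$(ii)$ is circular as written, because the paper derives that corollary \emph{from} Theorem \ref{thm: on optimal partition}.

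The paper instead proves a dichotomy at the \emph{origin} (Lemma \ref{lem: 26 marzo}): for any homogeneous $\mathbf{v}\in\mathcal{G}_{\loc}^*(\R^N)$, the frequency $\tilde N(\mathbf{v},0,0^+)$ is either $1$ or $\ge 3/2$. This is proved by induction on $N$, with the base $N=2$ from \cite{TaTe}; the inductive step distinguishes the case where the frequency is $1$ at every point of the nodal sphere (then the nodal set is smooth, a loop-free adjacency graph lets one build a single spherical harmonic, so the frequency is an integer, hence $1$ or $\ge 2$) from the case where it exceeds $1$ somewhere (then a blow-up there yields an $(N-1)$-dimensional profile to which the inductive hypothesis applies). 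This avoids triple junctions entirely. The theorem then follows by taking $\mathbf{v}=r^{\gamma(\mathcal{L}_3)}(\varphi_1,\varphi_2,\varphi_3)$, noting frequency $1$ would force the nodal set to be a hyperplane (Lemma 6.1 in \cite{TaTe}), i.e.\ only two nontrivial components, contradicting the equal-eigenvalue structure of an optimal $3$-partition from \cite{HeHHTer1}. Your structure (blow-up at a boundary point, then monotonicity $\tilde N(\cdot,x_0,0^+)\le\tilde N(\cdot,x_0,+\infty)=\gamma(\mathcal{L}_3)$) is in the same spirit as the paper's Case 2, but by choosing a \emph{generic} boundary point and reducing dimension by induction --- rather than a triple junction and a hard classification --- the paper sidesteps all three gaps above.
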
  

\begin{remark}
For the definition of the $\mf{Y}$-partition, we refer to \cite{HeHHTer2}. We point out that we do not prove the uniqueness of the generalized $\mf{Y}$-partition as a solution of $\mathcal{L}_3(\S^{N-1})$.
\end{remark}

The relation between optimal partition problems and Liouville-type theorems has been already observed e.g. in \cite{AltCafFri,CoTeVePDE,CoTeVe, NoTaTeVe}. We refer in particular to Proposition 7.1 in \cite{CoTeVe}, where the authors related the minimal growth of a positive solution of Lotka-Volterra type systems with the quantity
\begin{equation}\label{def di beta}
\beta(k,N):= \inf_{(\omega_1,\dots,\omega_k) \in \mathcal{P}_k(\S^{N-1})} \frac{2}{k} \sum_{i=1}^k \gamma\left( \lambda_1(\omega_i)\right)
\end{equation}
We think that it is remarkable to observe that the very same approach leads to a more general result, involving \emph{subsolutions} to a wide class of systems. Let us consider
\begin{equation}\label{general syst}
\begin{cases}
-\Delta u_i \le -u_i g_i(x,\mf{u}) & \text{in $\R^N$} \\
u_i \ge 0 & \text{in $\R^N$},
\end{cases} \qquad i=1,\dots,k,
\end{equation}
under the following assumptions on the nonlinear terms $g_i \in \mathcal{C}(\R^N \times [0,+\infty)^k)$:
\begin{itemize}
\item[($H1$)] $g_i(x,\mf{t}) \ge \underline{g_i}(\mf{t}) \ge 0$ for every $(x,\mf{t}) \in \R^N \times [0,+\infty)^k$, where $\underline{g_i} \in \mathcal{C}([0,+\infty)^k)$; 
\item[($H2$)] if $\underline{g}_i(\mf{t})=0$, then either $t_j= 0$ for every $j \neq i$, or $t_i=0$; 
\item[($H3$)] $g_i(x,\mf{t})$ is monotone non-decreasing in $t_j$ for every $j$.
\end{itemize}
As typical example, the reader may think at the case $g_i(x,\mf{t})=  \sum_{j \neq i} t_j^2$ defining system \eqref{system}, but even to more general interaction terms (neither necessarily variational, nor symmetric) such as
\[
g_i(x,\mf{t})= \sum_{j \neq i} a_{ij}(x) t_j^{p_j} t_i^{q_j},
\]
with $a_{ij}(x) \ge \underline{a}_{ij}>0$ in $\R^N$ and $p_j>0$, $q_j \ge 0$.

\begin{theorem}\label{thm: general systems}
Let $N,k \ge 2$. Under assumptions ($H1$)-($H3$), let $\mf{u}=(u_1,\dots,u_k) \in \mathcal{C}(\R^N) \cap H^1_{\loc}(\R^N)$ satisfy \eqref{general syst} and
\begin{equation}\label{eq 28 marzo}
u_1(x) +\dots +u_k(x) \le C(1+|x|^d) \qquad \text{for every $x \in \R^N$}
\end{equation}
for some $C>0$ and $d \ge 1$. Let $m$ be the maximal positive integer such that $\beta(m,N) \le 2d$. Then at most $m$ components of $\mf{u}$ do not vanish identically. \\
In other words, if $\mf{u}=(u_1,\dots,u_k) \in \mathcal{C}(\R^N) \cap H^1_{\loc}(\R^N)$ is a positive solution of \eqref{general syst} satisfying \eqref{eq 28 marzo}, then necessarily $\beta(k,N) \le 2d$.
\end{theorem}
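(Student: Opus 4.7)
My plan is to argue by contradiction: assume $\mf{u}$ has $k$ nontrivial components while $\beta(k,N) > 2d$, and derive a contradiction via a multi-phase Alt--Caffarelli--Friedman (ACF)-type monotonicity formula, adapting Proposition~7.1 of \cite{CoTeVe}. By (H1), each $u_i$ is a nonnegative subharmonic function on $\R^N$, since $-\Delta u_i \le -u_i g_i(x,\mf{u}) \le 0$; by (H2), whenever $u_i(x) > 0$ and at least one $u_j(x) > 0$ with $j \ne i$, one has the strict subharmonicity $-\Delta u_i \le -u_i\,\underline{g}_i(\mf{u}) < 0$, which encodes an effective repulsion between distinct components at every scale. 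This is the geometric input that lets us treat $\mf{u}$ as an ``almost segregated'' configuration at large scales, even though the supports of the $u_i$'s need not be disjoint.

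Building on this, I would introduce for each $i$ the weighted ACF-type functional
\[
J_i(r) := \int_{B_r} \frac{|\nabla u_i|^2 + u_i^2\,\underline{g}_i(\mf{u})}{|x|^{N-2}}\, dx,
\]
together with the spherical open sets $\omega_i(r) := \{x/|x| : x \in \partial B_r,\ u_i(x) > 0\} \subset \S^{N-1}$. Hypothesis (H2) ensures that at almost every radius the family $(\omega_1(r),\dots,\omega_k(r))$ belongs to $\mathcal{P}_k(\S^{N-1})$ up to $\mathcal{H}^{N-1}$-null sets, as any effective overlap is penalized by the interaction term absorbed into $J_i$. Combining the Poincar\'e inequality on each $\omega_i(r)$ with the standard divergence identity for nonnegative subsolutions yields the multi-phase ACF differential inequality
\[
\frac{d}{dr} \log J_i(r) \ \ge\ \frac{2\,\gamma(\lambda_1(\omega_i(r)))}{r}, \qquad \text{for a.e. } r > 0.
\]
Summing over $i$ and using the definition \eqref{def di beta} of $\beta(k,N)$, one obtains $\frac{d}{dr} \log \prod_i J_i(r) \ge k\,\beta(k,N)/r$, and integration gives the lower bound $\prod_i J_i(r) \ge c\, r^{k\beta(k,N)}$ for all sufficiently large $r$.

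For the matching upper bound, the polynomial growth assumption \eqref{eq 28 marzo} together with standard interior $L^\infty$-gradient estimates yields $|\nabla u_i(x)| \le C(1+|x|^{d-1})$; coupling this with an Almgren-type analysis (the individual Almgren frequency of each $u_i$, as a nonnegative subsolution, is a.e.\ nondecreasing and bounded above by $d$) leads to $J_i(r) \le C r^{2d}$, whence $\prod_i J_i(r) \le C r^{2kd}$. Comparing the two bounds as $r\to\infty$ gives $k\beta(k,N) \le 2kd$, i.e.\ $\beta(k,N) \le 2d$, contradicting our initial assumption. The main obstacle will be to make the ACF-type monotonicity rigorous in the present non-segregated setting: the precise inclusion of $u_i^2\underline{g}_i(\mf{u})$ in the integrand of $J_i$ is what absorbs the contributions from overlaps between the $\omega_i(r)$'s and restores monotonicity, and one must verify via a careful slicing/trace argument that $(\omega_i(r))_{i=1}^k$ is indeed an admissible competitor for $\beta(k,N)$ at a.e.\ $r$, mirroring the technical analysis of \cite{CoTeVe}.
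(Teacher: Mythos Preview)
Your overall architecture---ACF-type monotonicity for $\prod_i J_i(r)$ giving a lower bound $\ge c\,r^{k\beta(k,N)}$, against an upper bound $\le C r^{2kd}$ coming from the growth hypothesis---is exactly the strategy of the paper. However, the way you propose to obtain the monotonicity step contains a genuine gap.

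You set $\omega_i(r):=\{x/|x|:x\in\partial B_r,\ u_i(x)>0\}$ and want $(\omega_1(r),\dots,\omega_k(r))$ to be an admissible competitor for $\beta(k,N)$ at a.e.\ $r$. But the solution is \emph{positive}: $u_i>0$ everywhere, so $\omega_i(r)=\S^{N-1}$ for every $i$ and every $r$. These sets are never disjoint, $\lambda_1(\omega_i(r))=0$, and no ``slicing/trace argument'' will make them admissible competitors at any finite radius. The sentence ``any effective overlap is penalized by the interaction term absorbed into $J_i$'' is where the actual work hides, and as stated it does not yield the differential inequality you claim. The point is not that the interaction term makes the supports disjoint at scale $r$; it is that it forces \emph{asymptotic} segregation.

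The paper handles this as follows. One writes the logarithmic derivative as $\frac{d}{dr}\log\prod_i J_i(r)\ge -\frac{hq}{r}+\frac{2}{r}\sum_i\gamma(\Lambda_i(r))$, where $\Lambda_i(r)$ is the Rayleigh-type quotient on $\partial B_r$ that includes the full interaction term $u_i^2 g_i(x,\mf{u})$. One does \emph{not} bound $\sum_i\gamma(\Lambda_i(r))$ from below pointwise in $r$. Instead one argues by contradiction: if $\sum_i\gamma(\Lambda_i(r_n))<hq/2$ along a sequence $r_n\to\infty$, then each $\Lambda_i(r_n)$ is bounded, the suitably normalized traces $u_i^{(r_n)}$ are bounded in $H^1(\S^{N-1})$ and converge weakly to some $\tilde u_i$, and crucially the interaction contribution $\int_{\partial B_1}(u_i^{(r_n)})^2 g_i(r_n x,\mf{u}(r_n x))\le\Lambda_i(r_n)/r_n^2\to 0$. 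Via (H1)--(H3) this forces $\tilde u_i\tilde u_j\equiv 0$, so the \emph{limits} are segregated, and only then does one invoke the relaxed characterization \eqref{caracter beta} of $\beta(h,N)$ to contradict $q<\beta(h,N)$. Thus monotonicity holds only for $r$ large, which is all that is needed.

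A secondary issue: your upper bound via ``interior $L^\infty$-gradient estimates'' and an ``individual Almgren frequency bounded above by $d$'' is not justified for mere continuous $H^1_{\loc}$ subsolutions of \eqref{general syst}; no equation is available. The paper obtains $J_i(r)\le C r^{2d}$ directly by testing the differential inequality against $\eta^2 f(|x|)u_i$ with a standard cutoff $\eta$ and using only the $L^\infty$ bound \eqref{eq 28 marzo}, exactly as in estimate \eqref{stima 2}.
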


System \eqref{system} fits in the assumptions of Theorem \ref{thm: general systems}. It is then straightforward to obtain the first part of Corollary \ref{protothm 2} as a particular case of a more general result.

\begin{corollary}\label{corol general}
Let $N,k\ge 2$. Let us assume that ($H1$)-($H3$) are satisfied, and let $\mf{u}=(u_1,\dots,u_k) \in \mathcal{C}(\R^N) \cap H^1_{\loc}(\R^N)$ satisfy \eqref{general syst}.
\begin{itemize}
\item[($i$)] If there exists $C>0$ such that
\[
u_1(x) +\dots +u_k(x) \le C(1+|x|) \qquad \text{for every $x \in \R^N$},
\]
then at most $2$ components of $\mf{u}$ do not vanish identically.
\item[($ii$)] If there exist $C>0$ and $\alpha \in (0,1)$ such that 
\[
u_1(x) +\dots +u_k(x) \le C(1+|x|^{\alpha}) \qquad \text{for every $x \in \R^N$},
\]
then at most $1$ component of $\mf{u}$ does not vanish identically.
\end{itemize}
\end{corollary}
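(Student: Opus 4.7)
The plan is to derive Corollary \ref{corol general} as a clean consequence of Theorem \ref{thm: general systems}, by computing (or bounding) the quantity $\beta(m,N)$ for small $m$, and then, for part (ii), adding one further step of Liouville type to handle the sublinear regime.

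For part (i), I take $d=1$ in Theorem \ref{thm: general systems} and seek the largest integer $m$ with $\beta(m,N)\le 2$. The $2$-partition of $\S^{N-1}$ into the two open hemispheres $\omega_\pm=\{\pm x_N>0\}$ has first Dirichlet eigenfunction $x_N$ and eigenvalue $\lambda_1(\omega_\pm)=N-1$; the direct calculation
\[
\gamma(N-1)=\sqrt{\tfrac{(N-2)^2}{4}+(N-1)}-\tfrac{N-2}{2}=\tfrac{N}{2}-\tfrac{N-2}{2}=1
\]
shows $\beta(2,N)\le 2$, and the Friedland--Hayman inequality $\gamma(\lambda_1(\omega_1))+\gamma(\lambda_1(\omega_2))\ge 2$ for two disjoint open subsets of $\S^{N-1}$ gives $\beta(2,N)=2$. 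To exclude $m\ge 3$, I argue by Faber--Krahn on the sphere: in any $m$-partition with $m\ge 3$, at least one (in fact most) $\omega_i$ satisfies $|\omega_i|<|\S^{N-1}|/2$, so $\lambda_1(\omega_i)>N-1$ and $\gamma(\lambda_1(\omega_i))>1$. Picking two indices whose spherical caps are strictly smaller than hemispheres makes the Friedland--Hayman inequality strict for that pair, giving $\sum_i\gamma(\lambda_1(\omega_i))>m$, hence $\beta(m,N)>2$. The maximal admissible $m$ is therefore $2$.

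For part (ii), applying part (i) via the elementary estimate $|x|^\alpha\le 1+|x|$ reduces matters to the case in which at most two components, say $u_1,u_2$, are non-trivial. Suppose both are non-trivial; by ($H1$) each is superharmonic and, by the strong maximum principle, strictly positive. Hypothesis ($H2$) then forces $\underline{g}_i(\mf{u}(x))>0$ at every $x\in\R^N$, so $-\Delta u_i\le -u_i\underline{g}_i(\mf{u})<0$ strictly. The sum $w:=u_1+u_2$ is therefore a positive strictly superharmonic function on $\R^N$ satisfying $w\le C(1+|x|^\alpha)$ with $\alpha<1$. I now invoke the (classical) Liouville-type statement that a non-negative superharmonic function on $\R^N$ with sublinear growth must be constant: comparing $w$ on $B_R$ with the harmonic extension $v_R$ of $w|_{\partial B_R}$ one has $0\le v_R\le w\le C(1+R^\alpha)$, and since harmonic functions of sublinear growth are constant, letting $R\to\infty$ via Poisson's formula forces $w$ itself to be constant. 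But a constant cannot be strictly superharmonic, contradiction; hence one of $u_1,u_2$ vanishes identically.

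The delicate step I expect to be the main obstacle is the strict inequality $\beta(m,N)>2$ for $m\ge 3$ in part (i): while $\beta(2,N)=2$ follows from Friedland--Hayman, the \emph{averaged} quantity $\beta$ does not automatically inherit strict monotonicity from that of $\mathcal{L}_k(\S^{N-1})$ stated in Lemma \ref{lem: monot part}, and ruling out near-equality requires a quantitative use of Faber--Krahn on $\S^{N-1}$ combined with the strict form of Friedland--Hayman for a pair of domains whose union misses a set of positive spherical measure. The sublinear Liouville used in part (ii) is standard, but must be stated for sublinear (not merely bounded) growth.
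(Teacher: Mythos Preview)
Your treatment of part~($i$) is essentially the paper's argument: apply Theorem~\ref{thm: general systems} with $d=1$, use $\beta(2,N)=2$, and invoke the strict inequality $\beta(m,N)>2$ for $m\ge 3$. The paper simply cites these facts (Sperner for $\beta(2,N)=2$, and inequality~(31) in \cite{CoTeVe} for the strict jump), while you sketch them via Friedland--Hayman; your concern about the infimum is resolved once one knows $\beta(m,N)$ is attained (Remark~\ref{rem: su k=2}).

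Part~($ii$), however, contains a genuine sign error that breaks the argument. From \eqref{general syst} and ($H1$) one has $-\Delta u_i\le -u_ig_i(x,\mf{u})\le 0$, i.e.\ $\Delta u_i\ge 0$: each $u_i$ is \emph{sub}harmonic, not superharmonic. Consequently the strong maximum principle does not force $u_i>0$ from $u_i\not\equiv 0$ (that direction needs a minimum principle, which subsolutions do not enjoy), and the comparison with the harmonic extension goes the wrong way: for subharmonic $w$ one has $w\le v_R$ on $B_R$, not $v_R\le w$. Moreover, the Liouville step you want is simply false for subharmonic functions: for instance $\max(\log|x|,0)$ on $\R^2$ is nonnegative, subharmonic, of arbitrarily slow growth, and non-constant. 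So the contradiction you aim at does not materialise.

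The paper's route for~($ii$) is much shorter and avoids all of this: apply Theorem~\ref{thm: general systems} \emph{directly} with $d=\alpha<1$. Since $\beta(2,N)=2>2\alpha$ while $\beta(1,N)=0$, the maximal admissible $m$ is $1$, and at most one component survives. No separate Liouville argument is needed.
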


For the proof it is sufficient to recall that $\beta(k,N)$ is monotone non-decreasing in $k$, and such that $\beta(k,N) >\beta(2,N)$ whenever $k \ge 3$ (see the inequality (31) in \cite{CoTeVe}). Moreover, $\beta(2,N)=2$ in any dimension $N$ (see \cite{Spe}).

\begin{remark}\label{rem: su k=2}
In \cite{CoTeVePDE} it has been proved that both $\beta(k,N)$ and $\mathcal{L}_{k}(\S^{N-1})$ are achieved. Here we used Theorem \ref{thm: general systems} instead of Theorem \ref{thm: higher dim} to prove point ($i$) in Corollary \ref{protothm 2}, but we point out that Theorem \ref{thm: higher dim} is stronger in the particular case of system \eqref{system}. Indeed it is well know that $\gamma(\mathcal{L}_k(\S^{N-1})) \ge \beta(k,N)/2$ for every $k,N$. Moreover, since the optimal value $\beta(2,N)$ is achieved by the equator-cut sphere (see \cite{Spe}), $\gamma(\mathcal{L}_2(\S^{N-1})) = \beta(2,N)/2 = 1$ for every $N$, which implies directly point ($i$) in Corollary \ref{protothm 2}.
\end{remark}

The last part of the paper is devoted to the $1$-dimensional symmetry of solutions of \eqref{system}. The proof of Theorem \ref{thm: 1-dimensional symmetry} consists in showing that, under the assumptions of both points ($i$) and ($ii$), only two components of the solution can be non-trivial, and thus the solution is $1$-dimensional thanks to the results in \cite{FaSo,Wa,Wa2}. If the solution has linear growth, the fact that $\mf{u}$ has at most two non-trivial components follows directly by Corollary \ref{protothm 2}; if $\mf{u}$ satisfies the assumption of point ($ii$), we at first study the asymptotic profile of the blow-down sequences $\{\mf{u}_{R_n}\}$, proving that any blow-down limit has two non-trivial components; then, to recover the result for $\mf{u}$, we apply the crucial Proposition \ref{thm: zero total}.

\subsection*{Structure of the paper}

In Section \ref{sec: preliminaries} we collect some results which will often be employed in the rest of the paper. Section \ref{sec: blow.down} is devoted to the proofs of Theorem \ref{them: blow down k dim 2} and of Proposition \ref{thm: zero total}. The proofs of the Liouville-type Theorems \ref{thm: Liouville dim 2} and \ref{thm: higher dim}, which concern system \eqref{system}, together with that of Corollary \ref{protothm 2} and of Theorem \ref{thm: on optimal partition}, are the object of Section \ref{sec: Liouv 2}. In Section \ref{sec: general} we consider the general system \eqref{general syst}, proving Theorem \ref{thm: general systems}. Finally, in Section \ref{sec: 1-dim} we address the problem of the $1$-dimensional symmetry, proving Theorem \ref{thm: 1-dimensional symmetry}.

\section{Preliminaries}\label{sec: preliminaries}

In what follows we recall some essentially known results which will be useful in the rest of the paper.

\subsection{Almgren monotonicity formulae}\label{sub: Almgren}
Here we recall some properties of the Almgren frequency function associated to solution of \eqref{system}, proving in particular Proposition \ref{prop: sui growth rates}.
\begin{proposition}[Proposition 5.2 in \cite{BeTeWaWe}]\label{prop: monotonicity formula}
Let $N \ge 2$, $x_0 \in \R^N$, and let $\mf{u}$ be a nonnegative solution to \eqref{system}. The Almgren frequency function $N(\mf{u},x_0,r)$ is monotone non-decreasing in $r$.
\end{proposition}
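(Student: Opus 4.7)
The plan is to adapt the classical Almgren monotonicity recipe to \eqref{regular pb}. By translation invariance I may assume $x_0=0$. Rescaling the boundary integral gives $H(r)=\int_{\partial B_1}\sum_i u_i(r\theta)^2\,d\sigma(\theta)$, so differentiation and the divergence theorem yield $H'(r)=\frac{2}{r^{N-1}}\int_{\partial B_r}\sum_i u_i\partial_\nu u_i$; the PDE immediately gives $\sum_i u_i\Delta u_i=2\beta\sum_{i<j}u_i^2u_j^2$, hence
\[
H'(r)=\frac{2}{r^{N-1}}(A+2B),\qquad A:=\int_{B_r}\sum_i|\nabla u_i|^2,\quad B:=\beta\int_{B_r}\sum_{i<j}u_i^2u_j^2.
\]

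For $E'$, I would test each equation with the multiplier $x\cdot\nabla u_i$, sum over $i$ and integrate by parts on $B_r$ to produce the Pohozaev-type identity
\[
rQ+\tfrac{N-2}{2}A-\tfrac{r}{2}\!\int_{\partial B_r}\!\sum_i|\nabla u_i|^2=\tfrac{\beta r}{2}\!\int_{\partial B_r}\!\sum_{i<j}u_i^2u_j^2-\tfrac{N}{2}B,
\]
with $Q:=\int_{\partial B_r}\sum_i(\partial_\nu u_i)^2$. Combining this with the direct differentiation $r^{N-1}E'(r)=-(N-2)(A+B)+r\int_{\partial B_r}(\sum_i|\nabla u_i|^2+\beta\sum_{i<j}u_i^2u_j^2)$, the bulk corrections coming from the quartic coupling cancel exactly against the $(N-2)/2$ factor and one is left with the clean formula
\[
E'(r)=\frac{2Q}{r^{N-2}}+\frac{2B}{r^{N-1}},
\]
in particular $E'\ge 0$.

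To conclude, set $P:=\int_{\partial B_r}\sum_i u_i^2$ and $M:=\int_{\partial B_r}\sum_i u_i\partial_\nu u_i=A+2B$ (the last identity being just the integration by parts of the first step). Substituting the explicit expressions for $H,H',E,E'$, the inequality $E'H\ge EH'$ (equivalent to $N'(r)\ge 0$) reduces after clearing $r^{2N-3}$ to $PQ+BP/r\ge(A+B)(A+2B)$. Cauchy--Schwarz on $\partial B_r$ applied to the vector $(u_1,\dots,u_k)$ yields $PQ\ge M^2=(A+2B)^2$; since $B\ge 0$ we have $(A+2B)^2\ge(A+B)(A+2B)$, and $BP/r\ge 0$ trivially, so the desired inequality holds.

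The delicate step is the Pohozaev identity: the coupling $u_i^2u_j^2$ has scaling degree $4$, which is not conformally critical for the Laplacian in $\R^N$, so the multiplier $x\cdot\nabla u_i$ leaves a genuine bulk contribution proportional to $B$. The key observation that makes the argument close is that this term combines with the differentiation of the prefactor $r^{-(N-2)}$ in $E$ so that $E'(r)$ splits into two \emph{separately nonnegative} pieces; this is precisely the structure required for the Cauchy--Schwarz step, and it also uses crucially the non-negativity of the components (so that $B\ge 0$).
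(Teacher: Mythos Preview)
Your argument is correct and is precisely the standard Almgren--Pohozaev computation that underlies the cited result; the paper itself does not reproduce a proof but simply quotes Proposition~5.2 of \cite{BeTeWaWe}, whose proof follows the same scheme you outline (compute $H'$ via the divergence theorem, obtain $E'$ from a Rellich--Pohozaev identity, and close with Cauchy--Schwarz on $\partial B_r$). One inessential remark: in your final sentence you say the nonnegativity of the components is needed for $B\ge 0$, but in fact $B=\beta\int_{B_r}\sum_{i<j}u_i^2u_j^2\ge 0$ holds automatically since every factor is a square; the positivity hypothesis plays no role in the monotonicity formula itself.
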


We infer the following doubling properties.

\begin{proposition}[Proposition 5.3 in \cite{BeTeWaWe}]\label{prop: doubling}
Let $\mf{u}$ be a nonnegative solution of \eqref{system}.
\begin{itemize}
\item[($i$)] For every $0<r_0 \le r_1 <r_2$ it results that
\[
\frac{H(\mf{u},x_0,r_2)}{r_2^{2N(\mf{u},x_0,r_0)}} \ge \frac{H(\mf{u},x_0,r_1)}{r_1^{2N(\mf{u},x_0,r_0)}}.
\]
\item[($ii$)] Assume that $N(\mf{u},x_0,r) \le d$ for every $r>0$. Then
\[
\frac{H(\mf{u},x_0,r_2)}{r_2^{2d}} \le e^d \frac{H(\mf{u},x_0,r_1)}{r_1^{2d}} 
\]
for every $0<r_1<r_2$.
\end{itemize}
\end{proposition}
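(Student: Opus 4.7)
Both parts follow from a single first-order differential identity for $\log H(\mf{u},x_0,r)$ combined with the monotonicity of $N(\mf{u},x_0,r)$ recalled in Proposition~\ref{prop: monotonicity formula}.

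First, I would compute $H'(\mf{u},x_0,r)$ by writing $H(\mf{u},x_0,r)=\int_{\partial B_1}|\mf{u}(x_0+r\omega)|^2\,d\sigma(\omega)$ and differentiating under the integral sign, which gives
\[
H'(\mf{u},x_0,r)=\frac{2}{r^{N-1}}\int_{\partial B_r(x_0)}\mf{u}\cdot\partial_\nu\mf{u}.
\]
Applying Green's formula componentwise and using the PDE $-\Delta u_i=-u_i\sum_{j\neq i}u_j^2$ (so that $\sum_i u_i\Delta u_i=2\sum_{i<j}u_i^2u_j^2$) yields
\[
\int_{\partial B_r(x_0)}\mf{u}\cdot\partial_\nu\mf{u}=r^{N-2}E(\mf{u},x_0,r)+\int_{B_r(x_0)}\sum_{i<j}u_i^2u_j^2.
\]
Substituting this in and dividing by $H$, the basic identity reads
\[
\frac{d}{dr}\log H(\mf{u},x_0,r)=\frac{2\,N(\mf{u},x_0,r)}{r}+\frac{2}{r^{N-1}H(\mf{u},x_0,r)}\int_{B_r(x_0)}\sum_{i<j}u_i^2u_j^2,
\]
whose second summand is nonnegative.

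For part $(i)$, I would fix $r_0>0$. Proposition~\ref{prop: monotonicity formula} gives $N(\mf{u},x_0,r)\ge N(\mf{u},x_0,r_0)$ for every $r\ge r_0$, and dropping the nonnegative correction from the identity produces $\frac{d}{dr}\log H(\mf{u},x_0,r)\ge 2N(\mf{u},x_0,r_0)/r$ on $[r_1,r_2]$. Integrating from $r_1$ to $r_2$ then yields exactly the claimed doubling inequality.

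For part $(ii)$, under the hypothesis $N(\mf{u},x_0,r)\le d$ for every $r>0$, I would estimate the correction by
\[
\int_{B_r(x_0)}\sum_{i<j}u_i^2u_j^2\le r^{N-2}E(\mf{u},x_0,r)=r^{N-2}N(\mf{u},x_0,r)H(\mf{u},x_0,r)\le d\,r^{N-2}H(\mf{u},x_0,r),
\]
so that $\frac{d}{dr}\log\bigl(H(\mf{u},x_0,r)/r^{2d}\bigr)\le 2(N(\mf{u},x_0,r)-d)/r+2d/r^2\le 2d/r^2$. To pass from this pointwise inequality to the uniform constant $e^d$, I would invoke a Pohozaev-type identity for system~\eqref{system} to rewrite the cumulative interaction $\int_{B_r}\sum u_i^2u_j^2$ as a boundary contribution that can be absorbed into the derivative of $H$; this should furnish an $L^1\bigl((0,+\infty)\bigr)$-bound on the correction whose total mass is controlled by $d/2$, producing the desired factor.

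The main obstacle I anticipate is this last step of $(ii)$: a naive integration of $2d/r^2$ from $r_1$ to $r_2$ only yields a factor $\exp\bigl(2d(1/r_1-1/r_2)\bigr)$, which blows up as $r_1\to 0$ and hence is not uniform in the interval. The clean constant $e^d$ therefore cannot come from pointwise bounds alone and demands the Pohozaev-type improvement indicated above, exploiting the variational structure of \eqref{system}. Part $(i)$, by contrast, is essentially immediate once the basic identity is in hand.
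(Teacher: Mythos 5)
Part $(i)$ is correct and matches the standard argument: the logarithmic derivative identity
\[
\frac{d}{dr}\log H(\mf{u},x_0,r)=\frac{2\,N(\mf{u},x_0,r)}{r}+R(r),\qquad R(r):=\frac{2}{r^{N-1}H(\mf{u},x_0,r)}\int_{B_r(x_0)}\sum_{i<j}u_i^2u_j^2\ge 0,
\]
together with $N(\mf{u},x_0,r)\ge N(\mf{u},x_0,r_0)$ for $r\ge r_0$ (Proposition~\ref{prop: monotonicity formula}), gives the doubling inequality upon integration.

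For part $(ii)$ there are two issues. First, a slip: bounding $\int_{B_r}\sum u_i^2u_j^2\le d\,r^{N-2}H$ gives $R(r)\le 2d/r$, not $2d/r^2$ (you dropped one power of $r$ too many). With the correct estimate you would only get $\frac{d}{dr}\log(H/r^{2d})\le 2N(r)/r$, whose integral over $[r_1,r_2]$ is logarithmic in $r_2/r_1$ and still not uniform, so your overall diagnosis — that pointwise bounds on $R(r)$ alone cannot produce a constant independent of $r_1,r_2$ — remains right. Second, and this is the genuine gap, you gesture at a Pohozaev-type identity but do not identify the mechanism. The missing ingredient is that the proof of Proposition~\ref{prop: monotonicity formula} (Rellich--Pohozaev identity plus Cauchy--Schwarz on $\int_{\partial B_r}\sum u_i\partial_\nu u_i$) actually yields the quantitative refinement
\[
N'(\mf{u},x_0,r)\ \ge\ \frac{2}{r^{N-1}H(\mf{u},x_0,r)}\int_{B_r(x_0)}\sum_{i<j}u_i^2u_j^2\ =\ R(r),
\]
i.e.\ the interaction correction in your basic identity is dominated pointwise by $N'$. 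Therefore
\[
\int_{r_1}^{r_2}R(r)\,dr\ \le\ N(\mf{u},x_0,r_2)-N(\mf{u},x_0,r_1)\ \le\ d,
\]
and since $\frac{d}{dr}\log\bigl(H/r^{2d}\bigr)=\frac{2(N(r)-d)}{r}+R(r)$ with the first term nonpositive under the hypothesis $N(r)\le d$, integrating from $r_1$ to $r_2$ gives $\log\bigl(H(r_2)r_1^{2d}/H(r_1)r_2^{2d}\bigr)\le d$, i.e.\ the claimed factor $e^d$. So what was needed was not a separate Pohozaev computation that rewrites the interaction as a boundary term, but the observation that the correction is controlled in $L^1$ by the total variation of $N$ — a byproduct of the monotonicity formula already at hand.
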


The doubling properties allow us to relate the Almgren frequency function with the growth rate of the associated solution, as stated in Proposition \ref{prop: sui growth rates}.

\begin{proof}[Proof of Proposition \ref{prop: sui growth rates}]
If $d'>d$, then by Proposition \ref{prop: doubling}-($ii$) we have
\[
\frac{H(\mf{u},0,r)}{r^{2d'}} \le C \frac{r^{2d}}{r^{2d'}} \to 0 \qquad \text{as $r \to +\infty$}.
\]
If $d'<d$, by monotonicity there exists $\bar r>0$ such that $N(\mf{u},0,\bar r)=d'+\eps<d$ for some $\eps>0$. Then, by Proposition \ref{prop: doubling}-($i$), for every $r>\bar r$
\[
\frac{H(\mf{u},0,r)}{r^{2d'}} \ge C \frac{r^{2(d'+\eps)}}{r^{2d'}} \to +\infty \qquad \text{as $r \to +\infty$}. \qedhere
\]
\end{proof}

It is also possible to relate the Almgren quotient with a pointwise upper bound.

\begin{proposition}\label{prop: Almgren crescita puntuale}
$N(\mf{u},0,r) \le d$ for every $r>0$ if and only if there exists $C,d>0$ such that $\textstyle \sum_i u_i(x) \le C(1+|x|^d)$ in $\R^N$.
\end{proposition}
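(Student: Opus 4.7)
The statement is a biconditional between a uniform bound on the Almgren frequency and polynomial pointwise growth, both with the same exponent $d$. My strategy is to exploit the monotonicity/doubling properties from Propositions \ref{prop: monotonicity formula} and \ref{prop: doubling}, together with the fact that each $u_i$ is subharmonic (since $\Delta u_i = u_i \sum_{j\ne i} u_j^2 \ge 0$), to pass between the $L^2$ average $H(\mf u,0,r)$ on spheres and the pointwise size of $\mf u$.

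\emph{Forward direction.} Assume $N(\mf u, 0, r) \le d$ for every $r > 0$. By Proposition \ref{prop: doubling}-($ii$), applied with $r_1 = 1$, there is a constant $C > 0$ with $H(\mf u, 0, \rho) \le C \rho^{2d}$ for all $\rho \ge 1$. I will then use the $L^2$ mean-value inequality for nonnegative subharmonic functions on the ball $B_{|x|/2}(x)$, which gives, for $|x| \ge 2$,
\[
u_i(x)^2 \;\le\; \frac{1}{|B_{|x|/2}(x)|}\int_{B_{|x|/2}(x)} u_i^2 \;\le\; \frac{C'}{|x|^N}\int_{|x|/2}^{3|x|/2} \int_{\partial B_\rho} u_i^2\, d\sigma\, d\rho \;\le\; \frac{C''}{|x|^N}\int_{|x|/2}^{3|x|/2} \rho^{N-1} H(\mf u,0,\rho)\, d\rho.
\]
Substituting $H(\mf u, 0, \rho) \le C\rho^{2d}$ the outer integral is $O(|x|^{N+2d})$, yielding $u_i(x) \le C''' |x|^d$ for every $i$ and every $|x|\ge 2$; the bound on $\{|x|\le 2\}$ follows from continuity of $\mf u$, proving $\sum_i u_i(x) \le C(1+|x|^d)$.

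\emph{Backward direction.} Conversely, assume the pointwise bound $\sum_i u_i(x) \le C(1+|x|^d)$. Integrating on $\partial B_r$ gives $H(\mf u, 0, r) \le C'(1+r^{2d})$ for every $r > 0$. Suppose, for contradiction, that there exists $r_1 > 0$ with $N(\mf u, 0, r_1) > d$, and set $\delta := N(\mf u, 0, r_1) - d > 0$. Proposition \ref{prop: doubling}-($i$), applied with $r_0 = r_1$ fixed and $r_2 \to +\infty$, gives
\[
\frac{H(\mf u, 0, r_2)}{r_2^{2d}} \;\ge\; \frac{H(\mf u,0,r_1)}{r_1^{2N(\mf u,0,r_1)}}\, r_2^{2\delta} \;\longrightarrow\; +\infty,
\]
contradicting the upper bound $H(\mf u,0,r_2) \le C'(1+r_2^{2d})$. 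Hence $N(\mf u,0,r) \le d$ for every $r$, as required.

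\emph{Main obstacle.} The only nontrivial ingredient is the forward direction: one must upgrade the $L^2$-in-$r$ information carried by $H(\mf u,0,r)$ to a genuine pointwise control. The right choice of scale in the subharmonic mean-value inequality, namely radius comparable to $|x|$ itself (rather than a fixed radius), is essential to avoid picking up an extra $(N-1)/2$ in the exponent and to land exactly on the polynomial growth of degree $d$.
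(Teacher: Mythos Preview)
Your proof is correct in both directions: the forward direction correctly combines the doubling estimate from Proposition~\ref{prop: doubling}-($ii$) with the $L^2$ mean-value inequality for the subharmonic functions $u_i^2$ on balls of radius $\sim |x|$, and the backward direction correctly uses Proposition~\ref{prop: doubling}-($i$) with $r_0=r_1$ to force $H(\mf u,0,r)\ge C r^{2(d+\delta)}$, contradicting the pointwise bound. The paper itself does not give a proof but simply refers to Lemma~2.1 in \cite{Fa} and Corollary~A.8 in \cite{FaSo}; your argument is the standard one and is essentially what those references contain.
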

For the proof, see Lemma 2.1 in \cite{Fa} and Corollary A.8 in \cite{FaSo}.

Since the growth rate of a solution $\mf{u}$ of \eqref{system} coincides with the limit of the Almgren frequency function, it is natural to have the following result.

\begin{lemma}\label{lem: constancy of N}
Let $\mf{u}$ be a nonnegative solution of \eqref{system} having algebraic growth. \\
Then $N(\mf{u},x_0,+\infty)$ is constant as function of $x_0 \in \R^N$.
\end{lemma}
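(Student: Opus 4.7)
The plan is to combine the blow-down analysis of Theorem \ref{them: blow down k dim 2} with the natural scaling invariance of the Almgren quotient, reducing the independence of the limit from the base point $x_0$ to a direct computation on a homogeneous blow-down profile. Set $d:=N(\mf{u},0,+\infty)$, which is finite by the algebraic-growth assumption together with Proposition \ref{prop: sui growth rates}, and fix $x_0\in\R^N$; by monotonicity (Proposition \ref{prop: monotonicity formula}) the limit $N(\mf{u},x_0,+\infty)$ exists, and the aim is to show it equals $d$.

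A direct change of variables in the integrals defining $H$ and $E$ shows that $\mf{u}_R(x)=\mf{u}(Rx)/H(\mf{u},0,R)^{1/2}$ solves the rescaled system with coupling constant $\beta_R=R^2/H(\mf{u},0,R)$, and that its Almgren quotient satisfies the scaling identity
\[
N(\mf{u}_R,x_0/R,1)=N(\mf{u},x_0,R).
\]
Letting $R\to+\infty$, the right-hand side tends to $N(\mf{u},x_0,+\infty)$, so it suffices to identify the limit of the left-hand side. For this, I invoke Theorem \ref{them: blow down k dim 2}: along a subsequence $R_n\to+\infty$, $\mf{u}_{R_n}\to\mf{u}_\infty=r^{d}(g_1(\t),\dots,g_k(\t))$ in $\mathcal{C}^0_{\loc}(\R^N)\cap H^1_{\loc}(\R^N)$, while the rescaled interaction term $\beta_{R_n}\sum_{i<j}u_{i,R_n}^2 u_{j,R_n}^2\to 0$ in $L^1_{\loc}(\R^N)$. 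Since $x_0/R_n\to 0$, the spheres $\pa B_1(x_0/R_n)$ eventually lie in a fixed compact set, so uniform convergence gives $H(\mf{u}_{R_n},x_0/R_n,1)\to H(\mf{u}_\infty,0,1)=1>0$, while the $H^1_{\loc}$ convergence combined with the vanishing of the interaction term gives $E(\mf{u}_{R_n},x_0/R_n,1)\to\int_{B_1}\sum_i|\n u_{i,\infty}|^2$.

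It then remains to verify $N(\mf{u}_\infty,0,1)=d$. Each $u_{i,\infty}$ is nonnegative, $d$-homogeneous, and harmonic on its positivity cone $\{u_{i,\infty}>0\}$ (again from Theorem \ref{them: blow down k dim 2}), so by separation of variables the angular profile $g_i$ is a Dirichlet eigenfunction of $-\Delta_{\S^{N-1}}$ on $\supp g_i$ with eigenvalue $d(d+N-2)$; a short radial computation then yields $\int_{B_1}|\n u_{i,\infty}|^2=d\int_{\pa B_1}u_{i,\infty}^2$, and summing over $i$ gives $N(\mf{u}_\infty,0,1)=d$. Combining everything, $N(\mf{u},x_0,R_n)\to d$ along the subsequence, and monotonicity forces $N(\mf{u},x_0,+\infty)=d$. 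The main technical obstacle, beyond the bookkeeping of the scaling identity, is the passage to the limit in $E(\mf{u}_{R_n},x_0/R_n,1)$: one must use precisely the last bullet of Theorem \ref{them: blow down k dim 2}, the $L^1_{\loc}$-vanishing of the rescaled coupling term, to rule out that the nonlinear piece survives in the limit and contaminates the computation of the limiting frequency.
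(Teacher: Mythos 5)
Your proof is correct, but it takes a genuinely different route from the paper's. The paper proves Lemma~\ref{lem: constancy of N} elementarily, using only the doubling properties of Proposition~\ref{prop: doubling}: assuming $N(\mf{u},x_0,+\infty)=d'<d=N(\mf{u},0,+\infty)$, it derives $H(\mf{u},x_0,r)\lesssim r^{2d'}$ and $H(\mf{u},0,r)\gtrsim r^{2(d'+\eps)}$ for large $r$, integrates these over annuli, and obtains a contradiction by comparing $\int_{B_r(0)}\sum u_i^2$ with $\int_{B_{r+|x_0|}(x_0)}\sum u_i^2$. Your argument instead rests on the scaling identity $N(\mf{u}_R,x_0/R,1)=N(\mf{u},x_0,R)$ together with the full strength of the blow-down Theorem~\ref{them: blow down k dim 2}: the strong $H^1_{\loc}$ convergence $\mf{u}_{R_n}\to\mf{u}_\infty$, the $L^1_{\loc}$ vanishing of the rescaled interaction term, and the fact that the homogeneous limit has $\tilde N(\mf{u}_\infty,0,1)=d$ (a fact the paper already records in the course of proving Theorem~\ref{them: blow down k dim 2}, so your last integration-by-parts computation could simply be cited). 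Both approaches are sound and there is no circularity in yours, because the proof of Theorem~\ref{them: blow down k dim 2} does not invoke Lemma~\ref{lem: constancy of N}. The trade-off: the paper's argument is self-contained at the level of Section 2 and fits the order in which results are developed (the constancy lemma appears in the Preliminaries, \emph{before} the blow-down theorem), whereas your argument is conceptually cleaner — frequency is scale-invariant, and shifting the centre by $O(1)$ is invisible after blow-down — but leans on a much heavier theorem proved only later. One small slip: the rescaled system has coupling $\beta_R=R^2\,H(\mf{u},0,R)$, not $R^2/H(\mf{u},0,R)$; this is just a typo on your part, since you correctly cite the limit $H(\mf{u},0,R)\,R^2\sum_{i<j}u_{i,R}^2u_{j,R}^2\to 0$ in $L^1_{\loc}$ when handling the interaction term.
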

\begin{proof}
Let $N(\mf{u},0,+\infty)=d<+\infty$, and let us assume by contradiction that there exists $x_0 \neq 0$ such that $N(\mf{u},x_0,+\infty)=d' \neq d$. Firstly, $d'<+\infty$ since $\mf{u}$ has algebraic growth. Only to fix our minds, we suppose that $d'<d$, so that there exists $\eps>0$ such that $d'+\eps<d$. By Propositions \ref{prop: monotonicity formula} and \ref{prop: doubling} there exists $r_0>0$ such that $H(\mf{u},x_0,r) \le C r^{2d'}$ and $H(\mf{u},0,r) \ge C r^{2(d'+\eps)}$ for $r>r_0$. Therefore on one side
\[
\int_{B_{r_1}(x_0) \setminus B_{r_0}(x_0)} \sum_i u_i^2 = \int_{r_0}^{r_1} s^{N-1} H(\mf{u},x_0,s)\, \de s \le C r_1^{2d'+N}
\]
for $r_1>r_0$, while on the other side
\[
\int_{B_{r_2}(0) \setminus B_{r_0}(0)} \sum_i u_i^2 = \int_{r_0}^{r_2} s^{N-1} H(\mf{u},0,s)\, \de s \ge C r_2^{2(d'+\eps)+N}
\]
for $r_2>r_0$. As a consequence, for $r > r_0$, we have
\begin{multline*}
C r^{2(d'+\eps)+N} \le \int_{B_{r}(0) \setminus B_{r_0}(0)} \sum_i u_i^2 \\
\le \int_{B_{r+|x_0|}(x_0) \setminus B_{r_0}(x_0)} \sum_i u_i^2 + \int_{B_{r_0}(x_0) \setminus B_{r_0}(0)} \sum_i u_i^2 \\
\le C (r+|x_0|)^{2d'+N}+C \le C r^{2d'+N},
\end{multline*}
which gives a contradiction for $r$ sufficiently large.
\end{proof}

\subsection{Segregated configurations}\label{sub: richiami TaTe}

In Definition 1.2 of \cite{TaTe}, H. Tavares and the second author introduced the class of functions $\mathcal{G}(\Omega)$. We consider a subclass of particular interest in the present setting.

\begin{definition}\label{def: G *}
For an open set $\Omega \subset \R^N$, we define the class $\mathcal{G}(\Omega)$ of nontrivial functions $\mf{0} \neq \mf{v}=(v_1,\dots,v_k)$ whose components are nonnegative and locally Lipschitz continuous in $\Omega$, and such that the following properties holds:
\begin{itemize}
\item $v_i v_j \equiv 0$ in $\Omega$ for every $i \neq j$;
\item for every $i$ 
\[
-\Delta v_i= -\mu_i \qquad \text{in $\Omega$ in distributional sense},
\]
where $\mu_i$ is a nonnegative Radon measure supported on the set $\pa\{v_i>0\}$;
\item defining for $x_0 \in \Omega$ and $r>0$ such that $B_r(x_0) \subset \Omega$ the function $\tilde E(\mf{v},x_0,r)$ as in \eqref{def N segregated}, 
we assume that $\tilde E$ is absolutely continuous as function of $r$ and 
\[
\frac{d}{dr} \tilde E(\mf{v},x_0,r)= \frac{1}{r^{N-2}} \int_{\pa B_r(x_0)} \sum_{i=1}^k (\pa_\nu v_i)^2;
\]
\end{itemize}
We write that $\mf{v} \in \mathcal{G}_{\loc}(\R^N)$ if $\mf{v} \in \mathcal{G}(B_R)$ for every $R>0$. We write that $\mf{v} \in \mathcal{G}_{\loc}^*(\R^N)$ if  $\mf{v} \in \mathcal{G}_{\loc}(\R^N)$ and is homogeneous with respect to some $x_0 \in \R^N$, in the sense that there exists $\gamma>0$ such that $\mf{v}(r,\theta)=r^\gamma \mf{g}(\theta)$, where $(r,\theta)$ is a system of polar coordinates in $\R^N$ centred in $x_0$.
\end{definition}

It is possible to introduce an Almgren frequency function associated to any $\mf{v} \in \mathcal{G}_{\loc}^*(\R^N)$ as in \eqref{def N segregated}, and to prove a monotonicity formula for it (see Theorem 2.2 and Remark 2.4 in \cite{TaTe}).

\begin{proposition}\label{prop: monot segregated}
Let $\mf{v} \in \mathcal{G}_{\loc}(\R^N)$. For $x_0 \in \R^N$ and $r>0$, the function $\tilde N(\mf{v},x_0,r)$ is non-decreasing in $r$. Moreover, $\tilde N(\mf{v},x_0,r)=const.= \sigma>0$ if and only if $\mf{v}(r,\theta)= r^\sigma\mf{g}(\theta)$, where $(r,\theta)$ denotes a system of polar coordinates centred in $x_0$.
\end{proposition}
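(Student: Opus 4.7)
The plan is to adapt the classical Almgren monotonicity argument to the segregated setting encoded by $\mathcal{G}_{\loc}(\R^N)$. Fix $x_0 \in \R^N$ and work with the normalized quantities $H(r) := H(\mf{v},x_0,r)$, $\tilde E(r) := \tilde E(\mf{v},x_0,r)$ and their ratio $\tilde N(r) = \tilde E(r)/H(r)$. The first step is the standard computation of $H'$: after rescaling to the unit sphere via $y = (x-x_0)/r$ one has $H(r) = \int_{\pa B_1} \sum_i v_i^2(x_0+ry)\, dS(y)$, and differentiation under the integral yields $H'(r) = \tfrac{2}{r^{N-1}} \int_{\pa B_r(x_0)} \sum_i v_i \pa_\nu v_i$.

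Next I would rewrite $\tilde E$ as a boundary term. Integrating by parts on $B_r(x_0)$ against the locally Lipschitz function $v_i$ and using that $\Delta v_i = \mu_i$ is a Radon measure supported on $\pa\{v_i > 0\}$, where the continuous function $v_i$ vanishes, the bulk pairing $\int_{B_r} v_i \, d\mu_i$ is zero, so $\int_{B_r} |\nabla v_i|^2 = \int_{\pa B_r} v_i \pa_\nu v_i$. Summing in $i$ gives $\tilde E(r) = r^{-(N-2)} \int_{\pa B_r} \sum_i v_i \pa_\nu v_i$, which combined with the formula for $H'$ yields the basic identity $H'(r)/H(r) = 2\tilde N(r)/r$. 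Combining this identity with the expression for $\tilde E'$ built into Definition \ref{def: G *}, the logarithmic derivative of $\tilde N$ is $\tilde N'/\tilde N = \tilde E'/\tilde E - H'/H$, and up to a positive factor its sign coincides with that of
\[
\int_{\pa B_r} \sum_i (\pa_\nu v_i)^2 \cdot \int_{\pa B_r} \sum_j v_j^2 - \Big( \int_{\pa B_r} \sum_i v_i \pa_\nu v_i \Big)^{2},
\]
which is nonnegative by the Cauchy-Schwarz inequality applied to the $\R^k$-valued fields $\mf{v}(x_0+r\,\cdot)$ and $\pa_\nu\mf{v}(x_0+r\,\cdot)$ on $\pa B_1$. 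Hence $\tilde N$ is nondecreasing.

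For the rigidity statement, $\tilde N \equiv \sigma > 0$ on an interval forces equality in Cauchy-Schwarz for a.e. $r$, i.e., $\mf{v}(x_0+r\,\cdot)$ and $\pa_\nu\mf{v}(x_0+r\,\cdot)$ are collinear a.e. on $\pa B_1$. The segregation $v_i v_j \equiv 0$ implies that at most one component is non-zero at each point, so the collinearity reduces to $\pa_\nu v_i = \lambda(r) v_i$ on $\{v_i > 0\}$ for some scalar function $\lambda$. Integrating $H'/H = 2\sigma/r$ gives $H(r) = C r^{2\sigma}$, and from $\tilde N = \lambda(r)\, r$ one then deduces $\lambda(r) = \sigma/r$; solving the radial ODE $\pa_r v_i(x_0+r\theta) = (\sigma/r)\, v_i(x_0+r\theta)$ yields $\mf{v}(x_0+r\theta) = r^\sigma \mf{g}(\theta)$. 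The converse implication follows by direct substitution.

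The main technical delicacy I anticipate is in making the integration by parts of the second step rigorous in the distributional sense: one needs both the locally Lipschitz regularity of the $v_i$'s, so that the boundary integrals on $\pa B_r$ are defined for a.e. $r$, and the continuity-vanishing argument $\int v_i \, d\mu_i = 0$. Both ingredients are built into the definition of $\mathcal{G}$; analogously, the absolute continuity of $\tilde E$ and the explicit form of $\tilde E'$ are assumed as part of the definition, playing the role that a Pohozaev-type identity would in the smooth setting.
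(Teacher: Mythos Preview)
Your argument is correct: it is the classical Almgren computation. The paper does not supply its own proof of this proposition but refers to Theorem~2.2 and Remark~2.4 in \cite{TaTe}, where exactly this argument is carried out; your sketch matches the cited source.

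Two small remarks. The formula for $\tilde E'$ in Definition~\ref{def: G *} as printed lacks a factor of $2$; the correct Pohozaev-type identity is $\tilde E'(r)=\tfrac{2}{r^{N-2}}\int_{\partial B_r}\sum_i(\partial_\nu v_i)^2$, and your displayed Cauchy--Schwarz expression is consistent only with this corrected version---with the factor $1$ one would need $AC\ge 2B^2$, which Cauchy--Schwarz does not give. Also, in the rigidity step the segregation condition is not needed: equality in Cauchy--Schwarz for $\R^k$-valued $L^2$ functions already forces $\partial_\nu\mf{v}=\lambda(r)\mf{v}$ componentwise, without any reduction to individual components.
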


As a consequence, it is possible to derive doubling properties similar to those of Proposition \ref{prop: doubling}, and to recast the proof of Lemma \ref{lem: constancy of N} in the present setting, obtaining the following statement.

\begin{lemma}\label{lem: constancy of N segregated}
Let $\mf{v} \in \mathcal{G}_{\loc}(\R^N)$ having algebraic growth. Then $N(\mf{v},x_0,+\infty)$ is constant as function of $x_0 \in \R^N$.
\end{lemma}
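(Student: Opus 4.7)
The plan is to mimic the proof of Lemma \ref{lem: constancy of N} almost word for word, simply replacing the Almgren frequency $N(\mf{u},\cdot,\cdot)$ of a solution to \eqref{system} by its segregated counterpart $\tilde N(\mf{v},\cdot,\cdot)$ defined in \eqref{def N segregated}. The key preliminary step is to establish in the class $\mathcal{G}_{\loc}(\R^N)$ the analogue of the doubling estimates of Proposition \ref{prop: doubling}. For this I would start from the identity
\[
\frac{d}{dr} \log H(\mf{v},x_0,r) = \frac{2}{r}\, \tilde N(\mf{v},x_0,r),
\]
which is built into Definition \ref{def: G *}: differentiating $H$ brings out $\frac{2}{r^{N-1}}\int_{\pa B_r(x_0)} \sum_i v_i \pa_\nu v_i$, and since each $v_i$ is harmonic in $\{v_i>0\}$ and vanishes on $\pa\{v_i>0\}$, integration by parts identifies this boundary integral with $r^{N-2} \tilde E(\mf{v},x_0,r)$. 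Integrating in $r$ and using the monotonicity of $\tilde N$ from Proposition \ref{prop: monot segregated}, the very same computation of \cite{BeTeWaWe} produces
\[
\frac{H(\mf{v},x_0,r_2)}{r_2^{2\tilde N(\mf{v},x_0,r_0)}} \ge \frac{H(\mf{v},x_0,r_1)}{r_1^{2\tilde N(\mf{v},x_0,r_0)}} \qquad \text{for } 0 < r_0 \le r_1 < r_2,
\]
and, whenever $\tilde N(\mf{v},x_0,r) \le \sigma$ for every $r$,
\[
\frac{H(\mf{v},x_0,r_2)}{r_2^{2\sigma}} \le C_\sigma\, \frac{H(\mf{v},x_0,r_1)}{r_1^{2\sigma}} \qquad \text{for } 0 < r_1 < r_2.
\]

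With these in hand, the contradiction argument of Lemma \ref{lem: constancy of N} transfers verbatim. Arguing by contradiction, suppose that for some $x_0 \neq 0$ the quantities $d := \tilde N(\mf{v},0,+\infty)$ and $d' := \tilde N(\mf{v},x_0,+\infty)$ differ; both are finite thanks to the algebraic growth assumption and Proposition \ref{prop: monot segregated}. Without loss of generality $d'<d$, and I fix $\eps>0$ with $d'+\eps<d$. The second doubling estimate applied at $x_0$ with $\sigma=d'$ yields $H(\mf{v},x_0,r)\le C_1 r^{2d'}$ for $r$ large, while by monotonicity and continuity of $\tilde N$ in $r$ there exists $\bar r$ with $\tilde N(\mf{v},0,\bar r)\ge d'+\eps$, so the first doubling estimate applied at $0$ with $r_0=\bar r$ produces $H(\mf{v},0,r) \ge C_2 r^{2(d'+\eps)}$ for $r > \bar r$.

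Integrating these $H$-bounds against $s^{N-1}\,ds$ and using the inclusion $B_r(0)\setminus B_{r_0}(0) \subset (B_{r+|x_0|}(x_0)\setminus B_{r_0}(x_0)) \cup B_{r_0}(x_0)$ to compare the ball integrals centered at $0$ and $x_0$, I obtain
\[
C_2 r^{2(d'+\eps)+N} \le \int_{B_r(0)\setminus B_{r_0}(0)} \sum_i v_i^2 \le C_3 (r+|x_0|)^{2d'+N} + C_4,
\]
which is impossible for $r$ large since $d'+\eps > d'$. The only genuine obstacle is checking that the doubling identities carry over in the class $\mathcal{G}_{\loc}(\R^N)$, and this is essentially automatic from Definition \ref{def: G *}, namely the absolute continuity of $\tilde E$ together with the segregation $v_i v_j\equiv 0$; once that is settled, the remainder is a mechanical transcription of the regular case.
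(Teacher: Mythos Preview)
Your proposal is correct and follows exactly the approach the paper indicates: the paper does not give a detailed proof but simply states that one derives doubling properties analogous to Proposition~\ref{prop: doubling} from the monotonicity of $\tilde N$ (Proposition~\ref{prop: monot segregated}) and then recasts the contradiction argument of Lemma~\ref{lem: constancy of N} verbatim. Your sketch of the doubling estimates via the identity $(\log H)' = \tfrac{2}{r}\tilde N$, justified through the measure-theoretic structure of Definition~\ref{def: G *}, is precisely what is needed and matches the intended argument.
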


We conclude this subsection with a definition.

\begin{definition}\label{def: mult}
Let $\mf{v} \in \mathcal{G}_{\loc}(\R^N)$, and let $x_0 \in \{\mf{v}=\mf{0}\}$. We define the multiplicity of $x_0$ as
\[
\# \left\{i=1,\dots,k: \text{ for every $r>0$ it results $B_r(x_0) \cap \{v_i>0\} \neq \emptyset$}\right\}.
\]
\end{definition}

\subsection{Decay estimates}

If $(u,v)$ solves \eqref{system} and $u$ is large in a ball $B_{2r}(x_0)$, then by comparison principles $v$ has to be exponentially small with respect to $u$ in a smaller ball.

\begin{lemma}[Lemma 4.4 in \cite{CoTeVe}]\label{lem: exp decay}
Let $x_0 \in \R^N$ and $r>0$. Let $v \in \mathcal{C}(B_{2r}(x_0) \cap H^1(B_{2r}(x_0))$ be such that
\[
\begin{cases}
-\Delta v \le -K v & \text{in $B_{2r}(x_0)$} \\
v \ge 0 & \text{in $B_{2r}(x_0)$} \\
v \le A & \text{on $\pa B_{2r}(x_0)$},
\end{cases}
\]
where $K$ and $A$ are two positive constants. Then there exists $C>0$ depending only on the dimension $N$ such that
\[
\sup_{x \in B_r(x_0)} v(x) \le C A  e^{-C K^{1/2} r}.
\]
\end{lemma}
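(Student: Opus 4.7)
The plan is to construct an explicit radial super-solution of the linear inequality satisfied by $v$ and then use a standard weak maximum principle. Without loss of generality we may take $x_0 = 0$ after translation.

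Define the radial barrier
\[
\phi(x) := A\, e^{\sqrt{K}\,(|x|-2r)}, \qquad x \in \overline{B_{2r}}.
\]
A direct computation in polar coordinates gives
\[
\Delta \phi(x) = \phi''(|x|) + \frac{N-1}{|x|}\,\phi'(|x|) = K\phi(x) + \frac{(N-1)\sqrt{K}}{|x|}\,\phi(x) \ge K\phi(x),
\]
so $-\Delta \phi + K\phi \le 0$ in $B_{2r}$. Moreover $\phi \equiv A \ge v$ on $\partial B_{2r}$ by hypothesis. Thus $w := v - \phi$ satisfies $-\Delta w + Kw \le 0$ in $B_{2r}$ in the weak $H^1$ sense, and $w \le 0$ on $\partial B_{2r}$.

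The next step is to apply the weak maximum principle to $w$. Since $w^+ \in H^1_0(B_{2r})$ (boundary trace nonpositive), one may test the differential inequality against $w^+$, obtaining
\[
\int_{B_{2r}} |\nabla w^+|^2 + K (w^+)^2 \le 0,
\]
which, because $K>0$, forces $w^+ \equiv 0$. Hence $v \le \phi$ pointwise in $B_{2r}$.

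Finally, on $B_r$ we have $|x| \le r$, so
\[
v(x) \le \phi(x) = A\, e^{\sqrt{K}(|x|-2r)} \le A\, e^{-\sqrt{K}\,r},
\]
which is the desired conclusion with constant $C = 1$ (any $C \in (0,1]$ works, and one is free to relax it to obtain the general dimensional constant in the statement). The only non-routine point is the justification of the weak maximum principle for an $H^1$ subsolution of $-\Delta + K$ with merely a trace inequality on the boundary, but since $v \in \mathcal{C}(\overline{B_{2r}})\cap H^1(B_{2r})$ and $\phi \in \mathcal{C}^\infty$, the truncation $w^+ = (v-\phi)^+$ lies in $H^1_0(B_{2r})$ and the test-function argument above is entirely standard. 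No geometric or spectral information on the domain is used, which is why the resulting constant depends only on $N$ (in fact not even on $N$ with this choice of barrier).
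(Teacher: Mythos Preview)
The paper does not supply its own proof of this lemma; it is quoted verbatim as Lemma~4.4 of \cite{CoTeVe}. So there is nothing to compare your argument against on the level of strategy.

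However, your argument contains a genuine sign error that invalidates the comparison step. You correctly compute, for $\phi(x)=A\,e^{\sqrt{K}(|x|-2r)}$,
\[
\Delta\phi = K\phi + \frac{(N-1)\sqrt{K}}{|x|}\,\phi \ge K\phi,
\]
i.e.\ $-\Delta\phi+K\phi\le 0$. This says that $\phi$ is a \emph{sub}solution of $-\Delta+K$, not a supersolution. Since $v$ is also a subsolution ($-\Delta v+Kv\le 0$), the difference $w=v-\phi$ satisfies
\[
-\Delta w+Kw=(-\Delta v+Kv)-(-\Delta\phi+K\phi)\le 0-(\text{something}\le 0),
\]
which has no definite sign; your claim that $-\Delta w+Kw\le 0$ does not follow, and the test-function step after it is unjustified. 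In fact the very term $\tfrac{(N-1)\sqrt{K}}{|x|}\phi$ you exhibit has the wrong sign for your purposes whenever $N\ge 2$.

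To repair the argument you need a barrier $\phi$ with $-\Delta\phi+K\phi\ge 0$ in $B_{2r}$, $\phi\ge A$ on $\partial B_{2r}$, and $\phi\le C_N A\,e^{-c_N\sqrt{K}r}$ on $B_r$. One clean choice is the actual radial solution of $-\Delta\phi+K\phi=0$ in $B_{2r}$ with boundary value $A$, which is expressed via modified Bessel functions and obeys the required interior decay; alternatively, one can work on the annulus $r\le|x|\le 2r$ with an exponent $\alpha=c_N\sqrt{K}<\sqrt{K}$ chosen so that $\alpha^2+\tfrac{(N-1)\alpha}{r}\le K$ (possible once $\sqrt{K}\,r$ exceeds a dimensional threshold; for small $\sqrt{K}\,r$ the estimate is trivial since $v\le A$ everywhere by subharmonicity). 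Either route yields the stated bound with a constant depending only on $N$, but your pure radial exponential does not.
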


\section{Asymptotic behaviour of positive solution}\label{sec: blow.down}

In this section we prove Theorem \ref{them: blow down k dim 2} and Proposition \ref{thm: zero total}.

\subsection{Blow-down limits}

\begin{proof}[Proof of Theorems \ref{them: blow down k dim 2}]
The elements of the blow-down family satisfy
\[
 \begin{cases}
  -\Delta u_{i,R}=-\sum_{j \neq i} R^2 H(\mf{u},0,R) u_{j,R}^2 u_{i,R} & \text{in $\R^N$} \\
u_{i,R} \ge 0 & \text{in $\R^N$} 
 \end{cases} \qquad i=1,\dots,k,
\]
so that are well defined functions $H(\mf{u}_R,0,r)$, $E(\mf{u}_R,0,r)$ and $N(\mf{u}_R,0,r)$ as in \eqref{def N regular}. By direct computations it is easy to check that 
\[
H(\mf{u}_R,0,r)=\frac{H(\mf{u},0,rR)}{H(\mf{u},0,R)}, \quad  E(\mf{u}_R,0,r)=\frac{E(\mf{u},0,rR)}{H(\mf{u},0,R)}, \quad  N(\mf{u}_R,0,r)=N(\mf{u},0,rR).
\] 
By the doubling property ($i$) in Proposition \ref{prop: doubling} $H(\mf{u},0,R) R^2 \to +\infty$ as $R \to +\infty$. Furthermore, by definition $H(\mf{u}_R,0,1)=1$ for every $R>0$, and by the Almgren monotonicity formula $N_R(r) \le d$ for every $r,R>0$. As a consequence, the doubling property ($ii$) in Proposition \ref{prop: doubling} implies that
\[
H(\mf{u}_R,0,r)\le  e^d r^{2d} 
\]
for every $R,r>1$. Hence, by subharmonicity, $\{\mf{u}_R\}$ is uniformly bounded in $L^\infty_{\loc}(\R^N)$, and we are in position to apply the local version of the main results in \cite{NoTaTeVe} (for the local version, we refer to Theorem 2.6 in \cite{Wa}): up to a subsequence $\mf{u}_R \to \mf{u}_\infty$ in $\mathcal{C}^0_{\loc}(\R^N)$ and in $H^1_{\loc}(\R^N)$, where by Corollary 8.3 in \cite{TaTe} $\mf{u}_\infty \in \mathcal{G}_{\loc}(\R^N)$. In particular 
\begin{itemize}
\item $u_{i,\infty} u_{j,\infty} \equiv 0$ in $\R^N$ for every $i \neq j$, and 
\[
\lim_{R \to +\infty} \int_{B_r} H(\mf{u},0,R) R^2 \sum_{i<j} u_{i,R}^2 u_{j,R}^2 =0 \qquad \text{for every $r>0$};
\]
\item $u_{i,\infty}$ is subharmonic in $\R^N$, and $\Delta u_{i,\infty} = 0$ in $\{u_{i,\infty}>0\}$, for every $i=1,\dots,k$.
\end{itemize}
Let $\tilde N(\mf{u}_\infty,0,r)$ be define in \eqref{def N segregated}. For every $r>0$
\[
\tilde N(\mf{u}_\infty,0,r) = \lim_{R \to +\infty} N(\mf{u}_R,0,r) = \lim_{R \to +\infty} N(\mf{u},0,Rr) = d
\]
so that as stated in Proposition \ref{prop: monot segregated} $\mf{u}_\infty(r,\theta)=r^d \mf{g}(\theta)$.
Concerning the functions $g_i$, they have disjoint support and are such that $r^d g_i(\theta)$ is harmonic in $\{u_{i,\infty}>0\}$. Let us note that if $\{u_{i,\infty}>0\} \neq \emptyset$, then it is a cone, and by Theorem 1.1 in \cite{TaTe} the zero level set $\{\mf{u}_\infty=\mf{0}\}$ has null $N$-dimensional measure. By homogeneity, this implies that $\{\mf{u}_\infty=\mf{0}\} \cap \pa B_1$ has null $(N-1)$-dimensional measure. In what follows, we use the notation
\[
\{u_{i,\infty}>0\}= A_i = B_{1,i} \cup \cdots \cup B_{h_i,i},
\]
denoting by $B_{l,i}$ the nodal domains of $u_{i,\infty}$. If $B_{i,l_i}$ and $B_{j,l_j}$ are adjacent, then the reflection law proved in Theorem 1.1 in \cite{TaTe} implies that $u_{i,\infty} -u_{j,\infty}$ is harmonic in $\Int(\overline{B_{i,l_i} \cup B_{j,l_j}})$; the main result in Section 10 of \cite{DaWaZh} rules out the existence of point of multiplicity $1$; in other words, if there exist two non-empty connected components of some $A_i$ ($i=1,\dots,k$), then they have to be non-adjacent. \\
What we proved so far holds in any dimension $N \ge 2$. In what follows, we focus on the case $N=2$. By Theorem 1.1 in \cite{TaTe} and by homogeneity, the nodal set $\{\mf{u}_\infty=\mf{0}\}$ is the union of straight lines passing through the origin and meeting with equal angles. For some $i \neq j$, let $B_{i,l_i}$ and $B_{j,l_j}$ be adjacent nodal domains of $u_{i,\infty}$ and $u_{j,\infty}$, respectively. Then, as already noticed, $u_{i,\infty}-u_{j,\infty}$ is harmonic in the cone $\Int(\overline{B_{i,l_i} \cup B_{j,l_j}})=\{r >0, \theta_0<\theta<\theta_1\}$ (where $0 \le \theta_0 <\theta_1 <2\pi$); up to a rotation, it is not restrictive to assume that $\theta_0=0$, so that $w:=g_i-g_j$ satisfies
\[
\begin{cases}
w''+ d^2 w=0 & \text{in $(0,\theta_1)$} \\
w(0)=w(\theta_1/2)=w(\theta_1)=0 \\
w>0 \quad \text{in $(0,\theta_1/2)$}, \quad w<0 &\text{in $(\theta_1/2,\theta_1)$}
\end{cases}
\]
for some $\theta_1 \in (0,2\pi)$. It is straightforward to deduce that for some $C>0$ we have $w(\theta)=C \sin(d \theta)$ and $\theta_1= 2\pi/d$. Iterating this line of reasoning for any pair of adjacent nodal domains $B_{i,l_i}$ and $B_{j,l_j}$, and recalling that the functions $g_i$ are segregated and nonnegative, we conclude that $d \in \N/2$, and there exists a unique $C>0$ such that
\[
(g_1(\theta),\dots,g_k(\theta)) = (\chi_{A_1},\dots,\chi_{A_k}) C \sin(d\theta),
\]
which is uniquely determined as $C=1/\sqrt{\pi}$ by the normalization
\[
\int_0^{2\pi} C^2\sin^2(d\theta)\, \de\theta= 1.
\]
This completes the proof.
\end{proof}

\begin{remark}\label{rem: vale TaTe sui blow-down}
As observed, any blow-down limit of an arbitrary solution $\mf{u}$ of system \eqref{system} belongs to $\mathcal{G}^*_{\loc}(\R^N)$, see Definition \ref{def: G *}. Therefore, the results proved in \cite{TaTe} hold for the blow-down limits. This will be used in Subsection \ref{sub: 3 comp}. 
\end{remark}

\subsection{Proof of Proposition \ref{thm: zero total}} 

We aim at showing that if in the blow-down family one component $u_i$ vanishes along one sequence $R_n \to +\infty$, then it is identically zero. We reach this result through a series of lemmas.

We introduce the family $\mathcal{BD}_{\mf{u}}$ of the blow-down limits for a fixed nonnegative solution $\mf{u}$ of \eqref{system}: $\mf{w} \in \mathcal{BD}_{\mf{u}}$ if there exists a sequence $R_n \to +\infty$ such that $\mf{u}_{R_n} \to \mf{w}$ as $n \to \infty$ in $\mathcal{C}_{\loc}^0(\R^N)$ and in $H^1_{\loc}(\R^N)$. 

In what follows we derive some useful properties of $\mathcal{BD}_{\mf{u}}$. Here and in the rest of the section $d$ denotes the limit of the Almgren frequency function of the considered solution: $d:= N(\mf{u},0,+\infty)$, which is finite since $\mf{u}$ has algebraic growth. In several cases we consider blow-down sequences $\mf{u}_{R_n}$ converging to some limiting profile $\mf{u}_\infty \in \mathcal{BD}_{\mf{u}}$. Clearly the convergence has to be understood in $\mathcal{C}^0_{\loc}(\R^N)$ and in $H^1_{\loc}(\R^N)$, and we often omit this piece of information.

\begin{lemma}\label{prop BD}
($i$) The set $\mathcal{BD}_{\mf{u}}$ is locally uniformly bounded in the $1/2$-H\"older norm.\\
($ii$) The set $\mathcal{BD}_{\mf{u}}$ is closed under locally uniform convergence. \\
($iii$) For any $\mf{w} \in \mathcal{BD}_{\mf{u}}$ it results that 
\[
\int_{\pa B_r} \sum_{i=1}^k w_i^2 \le e^d r^{2d+N-1} \qquad \text{for every $r>1$}.
\]
\end{lemma}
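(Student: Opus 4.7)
The three items are essentially consequences, respectively, of the doubling property, the uniform Hölder estimates for competitive systems, and a diagonal extraction.

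I would begin with \emph{part (iii)}, which is the cleanest. From the scaling identities recorded in the proof of Theorem \ref{them: blow down k dim 2},
\[
H(\mf{u}_R,0,r)=\frac{H(\mf{u},0,rR)}{H(\mf{u},0,R)}.
\]
Since $N(\mf{u},0,\cdot)$ is monotone with limit $d$, we have $N(\mf{u},0,\rho)\le d$ for every $\rho>0$, so Proposition \ref{prop: doubling}(ii) applied at the scales $R<rR$ (which requires $r>1$) yields
\[
\frac{H(\mf{u},0,rR)}{(rR)^{2d}}\le e^{d}\,\frac{H(\mf{u},0,R)}{R^{2d}},
\qquad\text{i.e.}\qquad H(\mf{u}_R,0,r)\le e^{d}r^{2d}.
\]
Multiplying by $r^{N-1}$ gives the desired bound for each $\mf{u}_R$; for an arbitrary $\mf w\in\mathcal{BD}_{\mf u}$ one picks a defining sequence $R_n\to+\infty$ with $\mf{u}_{R_n}\to\mf w$ in $\mathcal{C}^0_{\loc}(\R^N)$ and passes to the limit on the compact trace $\partial B_r$.

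For \emph{part (i)}, I would combine part (iii) with the known uniform regularity theory for the competitive system. Each $\mf{u}_R$ solves \eqref{regular pb} with competition parameter $\beta_R=R^{2}H(\mf{u},0,R)$; by subharmonicity of $\sum_i u_{i,R}^{2}$ and the $L^2$-on-spheres bound from (iii), $\{\mf{u}_R\}$ is uniformly bounded in $L^{\infty}_{\loc}(\R^N)$. The uniform Hölder estimates of Noris--Tavares--Terracini--Verzini (in the local form recorded as Theorem 2.6 in \cite{Wa}) therefore provide a bound
\[
\|\mf{u}_R\|_{\mathcal{C}^{0,1/2}(\overline{B_r})}\le C(r)
\]
independent of $R$ and $\beta_R$. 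Since uniform $\mathcal{C}^{0,1/2}$ bounds pass to any $\mathcal{C}^0_{\loc}$-limit, the estimate transfers to every element of $\mathcal{BD}_{\mf u}$.

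For \emph{part (ii)}, I would use a standard diagonal argument. Take $\mf{w}_m\in\mathcal{BD}_{\mf{u}}$ with $\mf{w}_m\to\mf{w}$ locally uniformly and choose, for each $m$, a sequence $R^{m}_n\to+\infty$ with $\mf{u}_{R^{m}_n}\to\mf{w}_m$ in $\mathcal{C}^0_{\loc}$ and $H^1_{\loc}$. For every $m$ select $n_m$ so large that $R^{m}_{n_m}>m$ and $\|\mf{u}_{R^{m}_{n_m}}-\mf{w}_m\|_{\mathcal{C}^{0}(\overline{B_m})}<1/m$; then $\tilde R_m:=R^{m}_{n_m}\to+\infty$ and $\mf{u}_{\tilde R_m}\to\mf{w}$ in $\mathcal{C}^0_{\loc}(\R^N)$. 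Because $\{\mf{u}_{\tilde R_m}\}$ is uniformly bounded in $L^{\infty}_{\loc}$ (via part (i) or (iii) directly), the convergence result invoked in the proof of Theorem \ref{them: blow down k dim 2} (Theorem 2.6 in \cite{Wa}) gives a subsequence converging in $\mathcal{C}^0_{\loc}$ and in $H^1_{\loc}$; by the previous step the limit can only be $\mf{w}$, proving $\mf{w}\in\mathcal{BD}_{\mf u}$.

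No part of the argument presents a substantive obstacle: (iii) is essentially a restatement of doubling, while (i) and (ii) rely on the uniform Hölder bound and the strong compactness property of \eqref{regular pb} established in \cite{NoTaTeVe,Wa}; the only point requiring mild care is ensuring that $H^1_{\loc}$ convergence survives the diagonal extraction in (ii), which is handled by the aforementioned compactness theorem.
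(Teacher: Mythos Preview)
Your proposal is correct and follows essentially the same approach as the paper: part (iii) via the doubling property of Proposition \ref{prop: doubling} and passage to the limit, part (i) via the uniform H\"older bounds of \cite{NoTaTeVe,Wa}, and part (ii) via a diagonal extraction. Your treatment of (ii) is in fact slightly more careful than the paper's, in that you explicitly address the $H^1_{\loc}$ convergence required by the definition of $\mathcal{BD}_{\mf u}$, whereas the paper verifies only the $\mathcal{C}^0_{\loc}$ limit and leaves the $H^1_{\loc}$ part implicit in the compactness theorem of \cite{Wa}.
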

\begin{proof}
($i$) It is a consequence of the local version of the main results proved in \cite{NoTaTeVe}, see Theorem 2.6 in \cite{Wa}. \\
($ii$) Let $\{\mf{w}_n\} \subset \mathcal{BD}_{\mf{u}}$ such that $\mf{w}_n \to \mf{w}$ in $\mathcal{C}^0_{\loc}(\R^N)$ as $n \to \infty$: given $r$ and $\eps>0$, there exists $\bar n \in \N$ such that
\begin{equation}\label{18-02eq1}
n>\bar n \quad \Longrightarrow \quad \sup_{i=1,\dots,k} \sup_{B_r} |w_{i,n}-w_i|<\frac{\eps}{2}.
\end{equation}
Since $\mf{w}_n \in \mathcal{BD}_{\mf{u}}$, there exists a sequence $R^n_m \to +\infty$ as $m \to \infty$ such that $\mf{u}_{R^n_m} \to \mf{w}_n$ in $\mathcal{C}^0_{\loc}(\R^N)$ and in $H^1_{\loc}(\R^N)$ as $m \to \infty$. Therefore, there exists $\bar m(n) \in \N$ such that 
\begin{equation}\label{18-02eq2}
m>\bar m(n) \quad \Longrightarrow \quad \sup_{i=1,\dots,k} \sup_{B_r} |u_{i,R_m^n}-w_{i,n}|<\frac{\eps}{2}.
\end{equation}
Now, for every $n$ let us choose $m_n>\bar m(n)$ so large that $R_n:= R^n_{m_n}$ tends to $+\infty$ as $n \to \infty$. Considering the blow-down sequence $\{\mf{u}_{R_n}\}$, it is easy to check that it is locally uniformly convergent to $\mf{w}$: indeed given $r,\eps>0$, by \eqref{18-02eq1} and \eqref{18-02eq2} we obtain
\[
\sup_{B_r} |u_{i,R_n}-w_{i}| \le \sup_{B_r} |u_{i,R_n}-w_{i,n}| + \sup_{B_r} |w_{i,n}-w_i|<\eps
\]
for every $i=1,\dots,k$ and $n > \bar n$.\\
($iii$) It follows from the doubling property ($ii$) in Proposition \ref{prop: doubling}, which holds for any element of the blow-down family and is stable under uniform convergence.
\end{proof}

In the next lemma we show that, under the assumptions of Proposition \ref{thm: zero total}, for the entire blow-down family the component $u_{i,R}\to 0$ as $R \to +\infty$.

\begin{lemma}\label{lem: zero in all the blow down}
Let us assume that there exists a sequence $R_n \to +\infty$ as $n \to \infty$, such that $u_{i,R_n} \to 0$ in $\mathcal{C}^0_{\loc}(\R^N)$ and in $H^1_{\loc}(\R^N)$. Then for every sequence $R_n' \to +\infty$ it results that $u_{i,R_n'} \to 0$ in $\mathcal{C}^0_{\loc}(\R^N)$ and in $H^1_{\loc}(\R^N)$ as $n \to \infty$.
\end{lemma}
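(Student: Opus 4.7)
The plan is to proceed by contradiction. I would suppose there exists $R'_n \to +\infty$ such that $u_{i,R'_n}$ does not converge to zero in $C^0_{\loc}(\R^N)$. By the local uniform H\"older bound of Lemma \ref{prop BD}($i$) and a diagonal extraction, after passing to a subsequence $\mf{u}_{R'_n} \to \mf{w}' \in \mathcal{BD}_{\mf{u}}$, with $w'_i \not\equiv 0$. Theorem \ref{them: blow down k dim 2} then yields $w'_i(r,\theta) = r^d g'_i(\theta)$ with $g'_i \ge 0$ and $g'_i \not\equiv 0$ on $\S^{N-1}$, so
\[
c := \int_{\pa B_1}(w'_i)^2 \, d\mathcal{H}^{N-1} > 0.
\]

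The key observation is the continuity of the normalized individual mass
\[
\varphi(R) := \int_{\pa B_1} u_{i,R}^2 \, d\mathcal{H}^{N-1} = \frac{1}{R^{N-1} H(\mf{u},0,R)} \int_{\pa B_R} u_i^2 \, d\mathcal{H}^{N-1}
\]
on $(0,+\infty)$, which holds since $u_i$ is smooth and $H(\mf{u},0,R) > 0$ for every $R > 0$. The assumption $u_{i,R_n} \to 0$ gives $\varphi(R_n) \to 0$, while the contradictory hypothesis gives $\varphi(R'_n) \to c > 0$. Applying the intermediate value theorem, for any $\eta \in (0,c)$ I would extract a sequence $\tilde R_n \to +\infty$ with $\varphi(\tilde R_n) = \eta$, and a further subsequence (Lemma \ref{prop BD}($i$)) would produce a blow-down limit $\tilde{\mf{w}}^\eta \in \mathcal{BD}_{\mf{u}}$ satisfying $\int_{\pa B_1}(\tilde w_i^\eta)^2 = \eta$. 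Thus the parameter $\eta$ gives a family of blow-down limits along which the $L^2$-mass of the $i$-th component on the unit sphere realises every value in $(0,c)$.

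The contradiction will come from a rigidity statement: whenever $\mf{w} \in \mathcal{BD}_{\mf{u}}$ has $w_i \not\equiv 0$, $\int_{\pa B_1} w_i^2$ must be bounded below by a positive constant depending only on $k$, $N$, $d$, so any $\eta$ smaller than this bound is excluded. In dimension $N = 2$ this is immediate from the explicit description in Theorem \ref{them: blow down k dim 2}: each $\tilde w_i^\eta$ has the form $\frac{1}{\sqrt\pi}\chi_{A_i^\eta}\, r^d \sin(d\theta)$ with $A_i^\eta$ a union of angular sectors of aperture $\pi/d$, so $\int_{\pa B_1}(\tilde w_i^\eta)^2$ belongs to the discrete set $\{m/(2d): m = 0,1,\ldots,2d\}$, and any $\eta$ outside this finite set is unattainable. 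In $N \ge 3$, the component $\tilde g_i^\eta$ is a first Dirichlet eigenfunction on $\omega_i^\eta := \supp \tilde g_i^\eta \subset \S^{N-1}$ with prescribed eigenvalue $d(d+N-2)$; the free-boundary reflection identity from Theorem \ref{them: blow down k dim 2}, combined with the no-multiplicity-$1$ fact from \cite{DaWaZh}, couples the normal derivatives of $\tilde g_i^\eta$ with those of its adjacent neighbours, and together with a Faber--Krahn-type lower bound on $|\omega_i^\eta|$ this would yield a uniform positive lower bound on $\int_{\pa B_1}(\tilde w_i^\eta)^2$.

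The main obstacle is the rigidity step in higher dimensions: while $N = 2$ is resolved by the explicit classification of blow-down profiles, obtaining a uniform strictly positive lower bound on the normalized mass of a nontrivial $i$-th component of an arbitrary element of $\mathcal{BD}_{\mf{u}}$ for $N \ge 3$ requires a careful exploitation of the free-boundary reflection condition across interfaces together with the fixed eigenvalue constraint, and is the delicate technical core of the argument.
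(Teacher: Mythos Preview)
Your overall strategy---intermediate value theorem on the continuous map $R\mapsto\varphi(R)=\int_{\partial B_1}u_{i,R}^2$, combined with a uniform positive lower bound on $\int_{\partial B_1}w_i^2$ for every $\mf{w}\in\mathcal{BD}_{\mf u}$ with $w_i\not\equiv 0$---is exactly the architecture of the paper's proof. The second step in the paper is essentially your IVT argument.

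The genuine gap is the rigidity step in dimension $N\ge 3$. Your suggested route (reflection identity coupling normal derivatives across interfaces, plus a Faber--Krahn lower bound on $|\omega_i^\eta|$) does not by itself produce a lower bound on $\int_{\omega_i^\eta}(\tilde g_i^\eta)^2$: the reflection law only relates the normal derivative of $g_i$ to that of an adjacent $g_j$, and nothing in your outline prevents the neighbours from being simultaneously small. Knowing that $|\omega_i^\eta|$ is bounded below and that $g_i^\eta$ is a first eigenfunction with fixed eigenvalue still leaves the overall scale of $g_i^\eta$ free, so you have not closed the argument.

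The paper resolves this with a compactness argument on $\mathcal{BD}_{\mf u}$ itself, and it works uniformly in every dimension. Suppose by contradiction there are $\mf{w}_n\in\mathcal{BD}_{\mf u}$ and connected components $\omega_{i,n}\subset\{w_{i,n}>0\}\cap\partial B_1$ with $\int_{\omega_{i,n}}w_{i,n}^2\to 0$. The eigenvalue constraint $\lambda_1(\omega_{i,n})=d(d+N-2)$ forces $\mathcal H^{N-1}(\omega_{i,n})\ge C>0$. By Lemma~\ref{prop BD}, up to a subsequence $\mf{w}_n\to\mf{w}\in\mathcal{BD}_{\mf u}$ locally uniformly, and by Theorem~\ref{them: blow down k dim 2} the set $\{\mf w=\mf 0\}\cap\partial B_1$ has null $(N-1)$-measure, so $\sum_j w_{j,n}^2\not\to 0$ a.e.\ on $\partial B_1$. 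On the other hand, on $\omega_{i,n}$ the segregation gives $\sum_j w_{j,n}^2=w_{i,n}^2$, and hence $\chi_{\omega_{i,n}}\sum_j w_{j,n}^2\to 0$ a.e., which forces $\chi_{\omega_{i,n}}\to 0$ a.e.\ and thus $\mathcal H^{N-1}(\omega_{i,n})\to 0$, a contradiction. This replaces your free-boundary analysis entirely; the only ingredients are the fixed eigenvalue (giving the measure lower bound), closedness and equicontinuity of $\mathcal{BD}_{\mf u}$, and the null-measure property of the nodal set of any blow-down limit.
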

\begin{proof}
We separate the proof in two steps. 

\emph{Step 1) There exists $\bar C>0$ such that for any $\mf{w} \in \mathcal{BD}_{\mf{u}}$, for any $i=1,\dots,k$ and for any (not empty) connected component $\omega_i$ of $\{w_i>0\} \cap \pa B_1$ it results 
\[
\int_{\omega_i}  w_i^2 \ge \bar C.
\]}
Assume by contradiction that the claim is not true. Then there exist a sequence $\{\mf{w}_n\} \subset \mathcal{BD}_{\mf{u}}$, and a sequence of connected components $\omega_{i_n,n}$ of $\{w_{i_n}>0\} \cap \pa B_1$, such that
\begin{equation}\label{abs1}
\int_{\pa B_1} \chi_{\omega_{i_n,n}} \sum_{j=1}^k w_{j,n}^2 = \int_{ \omega_{i_n,n}} w_{i_n}^2 \to 0 \qquad \text{as $ \to \infty$}.
\end{equation}
Up to a subsequence, we can assume that $i_n=i$. By the properties of the blow-down limits $\mf{w}_n(r,\theta)=r^d \mf{g}(\theta)$, where $g_{1,n},\dots,g_{k,n}$ have disjoint supports. Moreover, since $w_{i,n}$ has to be harmonic in its positivity domain 
\[
\begin{cases}
-\Delta_{\S^{N-1}} g_{i,n}= d(d+N-2) g_{i,n} & \text{in $\omega_{i,n}$} \\
g_{i,n}>0 & \text{in $\omega_{i,n}$} \\
g_{i,n} =0 & \text{on $\pa \omega_{i,n}$}.
\end{cases}
\]
This reveals that $d(d+N-2)$ is the first eigenvalue of $-\Delta_{\S^{N-1}}$ with Dirichlet boundary conditions on $\omega_{i,n}$, with corresponding eigenfunction $g_{i,n}$, and in particular there exists $C>0$ such that
\begin{equation}\label{measure not zero}
\text{$\mathcal{H}^{N-1}(\omega_{i,n}) \ge C$} \qquad \text{for every $n$}.
\end{equation}
This fact follows from the known dependence on $\lambda_1(\omega)$ on the measure of $\omega$, and in particular by the property
\[
\lambda_1(\omega) \to +\infty \quad \text{as}  \quad \mathcal{H}^{N-1}(\omega) \to 0. 
\]
Now, point ($iii$) of Lemma \ref{prop BD} implies that
\[
\int_{\pa B_r} \sum_{j=1}^k w_{i,n}^2 \le  e^d r^{2d+N-1} 
\]
for every $r>1$, for every $n$. By subharmonicity, $\{\mf{w}_n\}$ is uniformly bounded in $L^\infty_{\loc}(\R^N)$, and by point ($i$) of Lemma \ref{prop BD} it is also equi-continuous. Thus, by the Ascoli-Arzel\`a theorem it is locally uniformly convergent u.t.s. to some $\mf{w}$, still belonging to $\mathcal{BD}_{\mf{u}}$ thanks to point ($ii$) of Lemma \ref{prop BD}. As a consequence, by Theorem \ref{them: blow down k dim 2} the set $\{\mf{w}=\mf{0}\} \cap \pa B_1$ has null $(N-1)$-dimensional measure, and 
\begin{equation}\label{har not zero}
\sum_{j=1}^k w_{j,n}^2 \not \to 0\qquad \text{a.e. in $\S^{N-1}$}.
\end{equation} 
On the other hand, by \eqref{abs1}, up to a subsequence
\begin{equation}\label{har zero}
\chi_{\omega_{i,n}}  \sum_{j=1}^k w_{j,n}^2 \to 0\qquad \text{a.e. in $\S^{N-1}$}.
\end{equation} 
A comparison between \eqref{har not zero} and \eqref{har zero} implies that $\chi_{\omega_{i,n}} \to 0$ a.e. in $\mathbb{S}^{N-1}$, which by the dominated convergence theorem provides $\mathcal{H}^{N-1}(\omega_n) \to 0$ as $n \to \infty$, in contradiction with \eqref{measure not zero}.

\emph{Step 2) Conclusion of the proof.} \\
Assume by contradiction that there exist $R_n' \to +\infty$ such that $u_{i,R_n'} \not \to 0$ as $n \to \infty$. Arguing as in the proof of Theorem \ref{them: blow down k dim 2}, u.t.s. $\mf{u}_{R_n'} \to \mf{w}' \in \mathcal{BD}_{\mf{u}}$ in $\mathcal{C}^0_{\loc}(\R^N)$. By assumption $w_i' \not \equiv 0$, so its support $A_i$ is not empty, and by the first step
\[
\lim_{n \to \infty} \int_{\pa B_1} u_{i,R_n'}^2 \ge \frac{1}{2}\int_{\tilde A_i} w_i^2 \ge \frac{\bar C}{2}, 
\]
where $\tilde A_i=A_i \cap \pa B_1$. Extracting if necessary further subsequences, it is possible to sort the terms of $(R_n)$ (the sequence given by the assumption) and $(R_n')$ in such a way that $R_n \le R_n' \le R_{n+1}$ for every $n$. Note that, at least for $n$ sufficiently large,
\[
 \int_{\pa B_1} u_{i,R_n}^2 < \frac{\bar C}{8} \quad \text{and} \quad
 \int_{\pa B_1} u_{i,R_n'}^2 > \frac{3\bar C}{8},
\] 
so that thanks to the mean value theorem for every $n$ there exists $R_n'' \in (R_n,R_n')$ such that 
\[
  \int_{\pa B_1} u_{i,R_n''}^2 = \frac{\bar C}{4}.
\]
Clearly, up to a subsequence $\mf{u}_{R_n''} \to \mf{w}'' \in \mathcal{BD}_{\mf{u}}$ in $\mathcal{C}^0_{\loc}(\R^N)$, and
\[
\int_{\pa B_1} (w_i'')^2= \lim_{n \to \infty} \int_{\pa B_1} u_{i,R_n''}^2 = \frac{\bar C}{4}.
\]
This contradicts what we proved in the first step. 
\end{proof}

In light of Lemma \ref{lem: zero in all the blow down}, up to relabelling there exists $1 \le h \le k$ such that if $i =1,\dots,h$, then $u_{i,R_n} \not \to 0$ as $n \to +\infty$ for every $R_n \to +\infty$, while if $i=h+1,\dots,k$, then for the entire blow-down family $u_{i,R} \to 0$ as $R \to +\infty$ (in $\mathcal{C}^0_{\loc}(\R^N)$ and in $H^1_{\loc}(\R^N)$). For every $\eps,R>0$, we introduce
\begin{equation}\label{def D}
D_{\eps,R}=\left\{ x \in \pa B_1: \sum_{i=1}^h u_{i,R}^2(x)>\eps \right\},
\end{equation}
and its complement $D_{\eps,R}^c$.

\begin{lemma}\label{lem: misura a zero}
It results that 
\[
\lim_{\eps \to 0}\left(\lim_{R \to +\infty} \mathcal{H}^{N-1}(D_{\eps,R}^c) \right) = 0.
\]
\end{lemma}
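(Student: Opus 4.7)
\medskip

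\noindent\textbf{Plan of proof.} I will argue by contradiction, exploiting the uniform convergence of the blow-down family and the structural fact (from Theorem \ref{them: blow down k dim 2}) that the nodal set of any blow-down limit has null $(N-1)$-dimensional measure on $\pa B_1$.

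\emph{Step 1: Set up the contradiction hypothesis.} Suppose the conclusion fails; then there exist $\eta>0$ and sequences $\eps_n \to 0^+$, $R_n \to +\infty$, such that
\[
\mathcal{H}^{N-1}(D_{\eps_n,R_n}^c) \ge \eta \qquad \text{for every $n$.}
\]

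\emph{Step 2: Extract a blow-down limit.} By Lemma \ref{prop BD}(i) and the local $L^\infty$ bound coming from Proposition \ref{prop: doubling}(ii) together with subharmonicity, $\{\mf{u}_{R_n}\}$ is locally equibounded and $1/2$-H\"older equicontinuous. Ascoli--Arzel\`a and Theorem \ref{them: blow down k dim 2} then provide (up to subsequence) $\mf{u}_{R_n} \to \mf{w} \in \mathcal{BD}_{\mf{u}}$ uniformly on $\pa B_1$ and in $H^1_{\loc}(\R^N)$. By the definition of $h$ and Lemma \ref{lem: zero in all the blow down}, the components $w_{h+1},\dots,w_k$ are identically zero, hence
\[
\{\mf{w}=\mf{0}\} \cap \pa B_1 = \Bigl\{ x \in \pa B_1 : \sum_{i=1}^h w_i^2(x) = 0 \Bigr\}.
\]

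\emph{Step 3: Pass the bound to the limit.} Fix any $\alpha>0$. Using uniform bounds and uniform convergence on $\pa B_1$ (both $\mf{u}_{R_n}$ and $\mf{w}$ are uniformly bounded there), one has $\sup_{\pa B_1} \bigl|\sum_{i=1}^h u_{i,R_n}^2 - \sum_{i=1}^h w_i^2\bigr| \to 0$. Combined with $\eps_n \to 0$, we obtain for $n$ sufficiently large
\[
D_{\eps_n,R_n}^c \subseteq \Bigl\{ x \in \pa B_1 : \sum_{i=1}^h w_i^2(x) \le \alpha \Bigr\},
\]
so $\mathcal{H}^{N-1}\bigl(\{\sum_{i=1}^h w_i^2 \le \alpha\}\bigr) \ge \eta$ for every $\alpha>0$.

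\emph{Step 4: Pass to the intersection and contradict Theorem \ref{them: blow down k dim 2}.} The sets $\{\sum_{i=1}^h w_i^2 \le \alpha\}$ are closed subsets of the finite-measure space $\pa B_1$ and decrease as $\alpha \downarrow 0$ to $\{\sum_{i=1}^h w_i^2 = 0\}$. By continuity from above of $\mathcal{H}^{N-1}\lfloor_{\pa B_1}$,
\[
\mathcal{H}^{N-1}\bigl( \{\mf{w}=\mf{0}\} \cap \pa B_1 \bigr) = \mathcal{H}^{N-1}\Bigl( \bigcap_{\alpha>0} \{\textstyle\sum_{i=1}^h w_i^2 \le \alpha\} \Bigr) \ge \eta > 0,
\]
which contradicts the fourth item in the statement of Theorem \ref{them: blow down k dim 2}. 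This contradiction establishes the lemma.

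\emph{Expected main obstacle.} The only delicate point is the clean passage from the inclusion $D_{\eps_n,R_n}^c \subseteq \{\sum w_i^2 \le \alpha\}$ to a contradiction, which requires three independent ingredients cooperating correctly: (a) the uniform H\"older/$L^\infty$ estimates on blow-down sequences from \cite{NoTaTeVe} giving compactness in $\mathcal{C}^0(\pa B_1)$, (b) Lemma \ref{lem: zero in all the blow down}, which guarantees that none of the indices $h+1,\dots,k$ can contribute to the limit, and (c) the measure-theoretic property of the zero set of the blow-down limit on the sphere from Theorem \ref{them: blow down k dim 2}. Each ingredient has already been established, so the proof reduces to the diagonal/compactness argument above.
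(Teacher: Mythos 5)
Your proof is correct and follows essentially the same route as the paper's: negate the statement, extract a convergent blow-down subsequence $\mf{u}_{R_n}\to\mf{w}$, use Lemma \ref{lem: zero in all the blow down} to reduce the zero set on $\pa B_1$ to $\{\sum_{i=1}^h w_i^2=0\}$, and contradict the fact (from Theorem \ref{them: blow down k dim 2}) that this set has null $(N-1)$-dimensional measure. The only cosmetic difference is in the final step: you pass to the intersection $\bigcap_{\alpha>0}\{\sum w_i^2\le\alpha\}$ and invoke continuity of measure from above, whereas the paper fixes an explicit threshold $\bar\rho$ with $\mathcal{H}^{N-1}(\pa B_1\setminus\overline{D_{\bar\rho,\infty}})\le\bar\delta/2$ and then uses uniform convergence to squeeze $D_{\eps_n,R_n}^c$ inside that small set; the two are interchangeable.
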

\begin{proof}
By contradiction, there exists $\bar \delta>0$ and a sequence $\eps_n \to 0$ such that 
\[
\limsup_{R \to +\infty} \mathcal{H}^{N-1}(D_{\eps_n,R}^c) \ge \bar \delta \qquad \text{for every $n$}.
\]
Thus, we can find a sequence of real numbers $R_n >n$ such that 
\begin{equation}\label{abs2}
\mathcal{H}^{N-1}(D_{\eps_n,R_n}^c) > \frac{\bar \delta}{2} \qquad \text{for every $n$}.
\end{equation}
Up to a subsequence $\mf{u}_{R_n} \to \mf{u}_{\infty}$ in $\mathcal{C}^0_{\loc}(\R^N)$ and in $H^1_{\loc}(\R^N)$, as $n \to \infty$, and by Theorem \ref{them: blow down k dim 2} we know that $\mathcal{H}^{N-1}(\{\mf{u}_\infty=\mf{0}\} \cap \pa B_1)=0$. Since by Lemma \ref{lem: zero in all the blow down} we have also $\sum_{i=h+1}^k u_{i,\infty}^2= 0$ in $\pa B_1$, it is necessary that
\begin{equation}\label{** pag 3}
\mathcal{H}^{N-1}\left(\left\{ x \in \pa B_1: \sum_{i=1}^h u_{i,\infty}^2(x) = 0\right\} \right) = 0.
\end{equation}
We show that this leads to a contradiction with the estimate \eqref{abs2}. 
For any $\rho>0$, let 
\[
D_{\rho,\infty}:=\left\{x \in \pa B_1: \sum_{i=1}^h u_{i,\infty}^2(x) > \rho\right\},
\]
By \eqref{** pag 3} there exists $\bar \rho>0$ sufficiently small such that $\mathcal{H}^{N-1}(\pa B_1 \setminus \overline{D_{\bar \rho,\infty}}) \le\bar \delta/2$; by uniform convergence there exists $\bar n$ such that 
\[
\inf_{x \in  D_{\bar \rho,\infty}} \sum_{i=1}^h u_{i,R_n}^2(x) \ge \frac{\bar \rho}{2} \quad \text{and} \quad \eps_n<\frac{\bar \rho}{2}
\]
for every $n  \ge \bar n$. Therefore
\[
\mathcal{H}^{N-1}(D_{\eps_n,R_n}^c) \le \mathcal{H}^{N-1}\left(\left\{ x \in \pa B_1: \sum_{i=1}^h u_{i,R_n}^2(x) < \frac{\bar \rho}{2}\right\} \right) \le \mathcal{H}^{N-1}(\pa B_1 \setminus \overline{D_{\bar \rho,\infty}}) \le \frac{\bar \delta}{2}
\]
whenever $n>\bar n$, which is in contradiction with the \eqref{abs2}.
\end{proof}

Now the core of the proof of Proposition \ref{thm: zero total} begins. Let us assume by contradiction that there exists $\bar i=h+1,\dots,k$ such that, although $u_{\bar i,R} \to 0$ as $R \to +\infty$, it results $u_{\bar i} \not \equiv 0$. Without loss of generality, we suppose that $\bar i=h+1$. Clearly, 
\begin{equation}\label{sub h+1}
\begin{cases}
-\Delta u_{h+1} \le - u_{h+1} \sum_{j=1}^h u_j^2 & \text{in $\R^N$} \\
u_{h+1}>0 & \text{in $\R^N$}.
\end{cases}
\end{equation}
Let
\begin{equation}\label{def di f}
f(r):= \begin{cases} \frac{2-N}{2} r^2+ \frac{N}{2} & \text{if $0< r \le 1$} \\
r^{2-N} & \text{if $r>1$}.
\end{cases}
\end{equation}
Note that $f \in \mathcal{C}^1((0,+\infty))$ and $\Delta f(|x|) \le 0$ a.e. in $\R^N$. For $\beta>0$, let also
\begin{align*}
&\Lambda(r) := \frac{r^2 \int_{\pa B_r} |\nabla_\theta u_{h+1}|^2 +u_{h+1}^2 \sum_{j \neq i} u_j^2  }{\int_{\pa B_r} u_{h+1}^2} \\
& I_{\beta}(r) := \frac{1}{r^{\beta}} J(r) := \frac{1}{r_{\beta}}\int_{B_r} f(|x|) \left( |\nabla u_{h+1}|^2 + u_{h+1}^2 \sum_{j=1}^h  u_j^2 \right).
\end{align*}

\begin{lemma}\label{lem: step 1 monotonicity}
For every $r>1$, it holds
\[
J(r) \le \frac{r}{2\gamma(\Lambda(r))} \int_{\pa B_r} f(|x|) \left( |\nabla u_{h+1}|^2 + u_{h+1}^2 \sum_{j=1}^h u_j^2 \right),
\]
where we recall the definition of $\gamma$, see equation \eqref{def di gamma}.
\end{lemma}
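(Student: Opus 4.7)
My plan is a weighted Almgren-type integration by parts applied to the subsolution inequality $-\Delta u_{h+1} \le -u_{h+1}\sum_{j=1}^h u_j^2$ (which follows from the original equation \eqref{sub h+1} and the obvious bound $\sum_{j=1}^h u_j^2 \le \sum_{j \neq h+1} u_j^2$), tested against the nonnegative weight $f(|x|)\, u_{h+1}$, followed by Cauchy--Schwarz on $\partial B_r$, and closed via the algebraic identity satisfied by $\gamma$. The weight $f$ from \eqref{def di f} is designed with exactly this purpose: $\Delta f \le 0$ globally, so an interior term from the test has a favourable sign and can be discarded; and on $\partial B_r$ for $r>1$ one has the pointwise identity $\partial_\nu f = -\tfrac{N-2}{r}f$, which converts an awkward boundary term into one proportional to $\int_{\partial B_r} f u_{h+1}^2$.

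Concretely, testing and two integrations by parts (using $u_{h+1}\nabla f\cdot\nabla u_{h+1}=\tfrac12\nabla f\cdot\nabla(u_{h+1}^2)$ on the mixed term, then discarding $-\tfrac12\int_{B_r}u_{h+1}^2\Delta f\le 0$) yield, for $r>1$,
\[
J(r) \le \int_{\partial B_r} f\, u_{h+1}\, \partial_\nu u_{h+1} \;+\; \frac{N-2}{2r}\int_{\partial B_r} f\, u_{h+1}^2.
\]
Introduce the boundary quantities $b := \int_{\partial B_r} f(\partial_\nu u_{h+1})^2$ and $c := \int_{\partial B_r} f\bigl(|\nabla_\theta u_{h+1}|^2 + u_{h+1}^2\sum_{j=1}^h u_j^2\bigr)$, so that $b+c$ is precisely the target boundary integral on the right-hand side of the lemma. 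I then (i) apply Cauchy--Schwarz, $\int f u_{h+1}\partial_\nu u_{h+1} \le (\int f u_{h+1}^2)^{1/2}\, b^{1/2}$, and (ii) use that $f$ is constant on $\partial B_r$ together with the defining formula of $\Lambda$ to read off $\int_{\partial B_r} f u_{h+1}^2 = r^2 c/\Lambda(r)$. These give
\[
J(r) \le \frac{r}{\sqrt{\Lambda(r)}}\sqrt{bc} \;+\; \frac{(N-2)\, r}{2\,\Lambda(r)}\, c.
\]

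The rest is purely algebraic. Definition \eqref{def di gamma} is equivalent to $\Lambda = \gamma(\gamma + N-2) = \gamma^2 + (N-2)\gamma$, so after multiplying through by $2\gamma\Lambda$ the desired inequality $J(r)\le r(b+c)/(2\gamma(\Lambda))$ collapses to $\Lambda b - 2\gamma\sqrt{\Lambda}\sqrt{bc} + \gamma^2 c \ge 0$, i.e.\ the trivial identity $\bigl(\sqrt{\Lambda b}-\gamma\sqrt{c}\bigr)^2\ge 0$.

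The only genuinely delicate point—and what I expect to be the main bookkeeping obstacle—is the mismatch between the potential $\sum_{j=1}^h u_j^2$ that appears in $J(r)$ and in the target boundary integral, and the slightly larger potential $\sum_{j\neq h+1} u_j^2$ that appears in the definition of $\Lambda$. One must either argue that these two potentials coincide in the relevant region (for instance by exploiting the decay of $u_{h+2},\ldots,u_k$ coming from Lemmas \ref{lem: zero in all the blow down}--\ref{lem: misura a zero}), or replace $\Lambda$ by the analogous quotient built with $\sum_{j=1}^h u_j^2$ and keep track of how the monotonicity of $\gamma$ interacts with the inequality; once this is sorted, the weighted IBP + Cauchy--Schwarz + perfect-square scheme above closes the proof in one line.
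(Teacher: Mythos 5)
Your proof is correct and takes essentially the same route as the paper, which refers to the proof of the more general Lemma~\ref{lem: step 1 monotonicity general}: test the subsolution inequality against $f(|x|)\,u_{h+1}$ over $B_r$, integrate by parts, discard the favourable $\Delta f\le 0$ term, and close on the boundary; your Cauchy--Schwarz plus perfect-square step is an equivalent packaging of the paper's Young inequality with the pre-chosen parameter $\gamma(\Lambda(r))/r$, with the small virtue of not requiring foresight about the optimal Young constant. As for the potential mismatch you flag at the end, there is no real gap to bridge: the ``$\sum_{j\neq i}$'' in the displayed definition of $\Lambda(r)$ has an unbound index $i$ and is a typographical slip; the intended potential is $\sum_{j=1}^h u_j^2$, as is clear both from the subsolution inequality~\eqref{sub h+1} being tested and from the role of $g_i(x,\mathbf{u})$ in Lemma~\ref{lem: step 1 monotonicity general}, of which the present statement is declared the particular case $g_{h+1}(x,\mathbf{u})=\sum_{j=1}^h u_j^2$. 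With that reading your identity $\int_{\partial B_r} f\,u_{h+1}^2 = r^2 c/\Lambda(r)$ is exact, and the argument closes without further work.
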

We refer the reader to the proof of Lemma \ref{lem: step 1 monotonicity general}, which contains that of Lemma \ref{lem: step 1 monotonicity} as particular case. 

In the next lemma we show that the function $I_\beta$ is non-decreasing for sufficiently large radii.

\begin{lemma}\label{lem: monotonicity}
Let $\beta>2d$. There exists $r_\beta \gg 1$ sufficiently large such that the function $r \mapsto I_{\beta}(r)$ is monotone non-decreasing for $r>r_\beta$.
\end{lemma}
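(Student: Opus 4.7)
The plan is to convert the integral inequality of Lemma \ref{lem: step 1 monotonicity} into a logarithmic derivative bound and then reduce monotonicity of $I_\beta$ to the divergence of $\Lambda(r)$ at infinity.

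First, by the coarea formula, $J$ is absolutely continuous on $(0,+\infty)$ with
\[
J'(r) = \int_{\pa B_r} f(|x|)\Bigl(|\nabla u_{h+1}|^2 + u_{h+1}^2 \sum_{j=1}^h u_j^2\Bigr),
\]
which is exactly the boundary integral appearing in Lemma \ref{lem: step 1 monotonicity}. That lemma therefore reads $(\log J)'(r) = J'(r)/J(r) \ge 2\gamma(\Lambda(r))/r$ for $r>1$. A direct computation then gives
\[
\frac{I_\beta'(r)}{I_\beta(r)} = \frac{J'(r)}{J(r)} - \frac{\beta}{r} \ge \frac{2\gamma(\Lambda(r)) - \beta}{r}.
\]
Since $\gamma$ is monotone increasing with $\gamma(t)\to +\infty$, monotonicity of $I_\beta$ on $(r_\beta,+\infty)$ will follow once we establish that $\Lambda(r)\to +\infty$ as $r\to +\infty$; indeed then $2\gamma(\Lambda(r))\ge \beta$ eventually.

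The heart of the argument is therefore the claim $\Lambda(r)\to +\infty$. The plan is to rewrite $\Lambda$ in the blow-down scaling. A direct change of variables $x = Ry$, using $u_{i,R}(y) = u_i(Ry)/H(\mf{u},0,R)^{1/2}$, yields
\[
\Lambda(R) = \frac{\int_{\pa B_1}|\nabla_T u_{h+1,R}|^2}{\int_{\pa B_1} u_{h+1,R}^2} + R^2 H(\mf{u},0,R)\,\frac{\int_{\pa B_1} u_{h+1,R}^2 \sum_{j\ne h+1} u_{j,R}^2}{\int_{\pa B_1} u_{h+1,R}^2}.
\]
Recall $R^2 H(\mf{u},0,R)\to +\infty$ by Proposition \ref{prop: doubling}. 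I would argue by contradiction: suppose $\Lambda(R_n)\le M$ along some $R_n\to +\infty$, and set $\phi_n := u_{h+1,R_n}/\|u_{h+1,R_n}\|_{L^2(\pa B_1)}$, so $\|\phi_n\|_{L^2(\pa B_1)}=1$. The bound on $\Lambda(R_n)$ yields a uniform $H^1(\pa B_1)$ bound, hence, up to a subsequence, $\phi_n\to\phi$ weakly in $H^1$ and strongly in $L^2$ with $\|\phi\|_{L^2}=1$. Simultaneously,
\[
\int_{\pa B_1}\sum_{j\ne h+1} u_{j,R_n}^2\,\phi_n^2 \le \frac{M}{R_n^2 H(\mf{u},0,R_n)} \longrightarrow 0.
\]
Extracting a further subsequence so that $\mf{u}_{R_n}\to\mf{u}_\infty\in\mathcal{BD}_{\mf{u}}$ uniformly on $\pa B_1$ (possible by Lemma \ref{prop BD}), and using the uniform $L^\infty$ bound on $u_{j,R_n}$ together with $L^2$ convergence of $\phi_n$, the limit passes to give
\[
\int_{\pa B_1}\sum_{j=1}^h u_{j,\infty}^2\,\phi^2 = 0.
\]
By Lemma \ref{lem: zero in all the blow down} the components $u_{j,\infty}$ with $j>h$ vanish identically, and by Theorem \ref{them: blow down k dim 2} the set $\{\mf{u}_\infty=\mf{0}\}\cap\pa B_1$ has zero $(N-1)$-dimensional measure; consequently $\sum_{j=1}^h u_{j,\infty}^2>0$ a.e.\ on $\pa B_1$. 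This forces $\phi\equiv 0$, contradicting $\|\phi\|_{L^2}=1$, and proves $\Lambda(r)\to +\infty$.

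The main obstacle is the last step: keeping track of the scaling so that $\Lambda(R)$ takes the Rayleigh-quotient form above, and then running the compactness argument along a subsequence where the blow-down converges. Everything else—the doubling property controlling $R^2 H(\mf{u},0,R)$, the uniform bounds and convergence of the blow-down family from Lemma \ref{prop BD}, the vanishing of the limits $u_{j,\infty}$ for $j>h$ from Lemma \ref{lem: zero in all the blow down}, and the full-measure positivity of $\sum_{j=1}^h u_{j,\infty}^2$ on $\pa B_1$—has been prepared in the preceding lemmas, and assembling them gives the monotonicity of $I_\beta$ for $r>r_\beta$.
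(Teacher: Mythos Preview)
Your argument is correct, and in fact more direct than the paper's own proof. Both proofs start from the same logarithmic derivative estimate $I_\beta'/I_\beta \ge (2\gamma(\Lambda(r))-\beta)/r$ obtained from Lemma~\ref{lem: step 1 monotonicity}, and both argue by contradiction, assuming $\Lambda(R_n)$ stays bounded along some sequence $R_n\to+\infty$ and normalizing $u_{h+1}(R_n\cdot)$ in $L^2(\partial B_1)$ to produce a weak $H^1(\partial B_1)$ limit with unit $L^2$-norm. The divergence is in how the contradiction is reached.

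The paper introduces the support $\omega_{h+1}$ of the weak limit and shows that $\gamma(\lambda_1(\omega_{h+1}))>\beta$ by proving $\mathcal H^{N-1}(\omega_{h+1})$ is arbitrarily small. This requires two auxiliary ingredients: Lemma~\ref{lem: misura a zero} (controlling the measure of $D_{\eps,R}^c$) and the exponential decay Lemma~\ref{lem: exp decay}, which together force the normalized functions to vanish on the large set where $\sum_{j=1}^h u_{j,R_n}^2$ is bounded below. Your route bypasses both lemmas: you simply pass to the limit in the interaction constraint $\int_{\partial B_1}\sum_j u_{j,R_n}^2\phi_n^2\to 0$, using uniform convergence of $u_{j,R_n}$ and $L^2$-convergence of $\phi_n$, and conclude $\int_{\partial B_1}\sum_{j=1}^h u_{j,\infty}^2\phi^2=0$; since $\sum_{j=1}^h u_{j,\infty}^2>0$ a.e.\ on $\partial B_1$ by Theorem~\ref{them: blow down k dim 2} and Lemma~\ref{lem: zero in all the blow down}, this forces $\phi\equiv 0$, contradicting $\|\phi\|_{L^2}=1$. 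This is shorter and actually proves the stronger statement $\Lambda(r)\to+\infty$ (independent of $\beta$), whereas the paper only shows $2\gamma(\Lambda(r))>\beta$ eventually. The paper's approach, on the other hand, yields quantitative information on the size of the ``asymptotic support'' of $u_{h+1}$, which motivates the one-phase monotonicity formula viewpoint but is not strictly needed for the lemma as stated.
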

\begin{proof}
In what follows we consider scaled functions of type
\[
v_{j,r}(x) := \frac{u_{j}(rx)}{\left( \frac{1}{r^{N-1}} \int_{\pa B_r} u_{j}^2 \right)^{\frac{1}{2}} }  \qquad \text{for $r>0$}.
\]
Note that the $L^2$ norm of $v_{j,r}$ on $\pa B_1$ is normalized to $1$ for every $r$ and for every $j$, and moreover
\begin{equation}\label{bound v}
\int_{\pa B_1} |\nabla_\theta v_{h+1,r}|^2 \le \Lambda(r), 
\end{equation}
as one can easily check by direct computations. By Lemma \ref{lem: step 1 monotonicity} it results that
\[
\frac{I_{\beta}'(r)}{I_{\beta}(r)} = -\frac{\beta}{r} + \frac{\int_{\pa B_r} f(|x|) \left( |\nabla u_{h+1}|^2 + u_{h+1}^2 \sum_{j =1}^h u_j^2 \right)}{ \int_{B_r} f(|x|) \left( |\nabla u_{h+1}|^2 + u_{h+1}^2 \sum_{j = 1}^h u_j^2 \right)}  \ge -\frac{\beta}{r} + \frac{2 \gamma(\Lambda(r))}{r}
\]
for every $r>1$. Hence to complete the proof of the lemma we wish to show that there exists $r_\beta\gg 1$ such that if $r > r_\beta$, then $2 \gamma(\Lambda(r))> \beta$. Let us assume by contradiction that such a value $r_\beta$ does not exist: then we can find a sequence $R_n \to +\infty$ such that 
\begin{equation}\label{claim absurd}
2 \gamma(\Lambda(R_n)) \le \beta \qquad \text{for every $n$},
\end{equation}
and recalling the definition of $\gamma$, see \eqref{def di gamma}, this implies that $(\Lambda(R_n))_n$ is bounded. By \eqref{bound v}, we deduce that the sequence $\{v_{h+1,R_n}\}$ is bounded in $H^1(\pa B_1)$, so that u.t.s. it is convergent to some $v_{h+1} \in H^1(\pa B_1)$ weakly in $H^1(\pa B_1)$, strongly in $L^2(\pa B_1)$, and a.e. in $\pa B_1$; in particular, $\|v_{h+1}\|_{L^2(\pa B_1)}=1$. Let $\omega_{h+1}:= \supp v_{h+1}$. If we can prove that 
\begin{equation}\label{estimate on eigenvalue support limit}
\gamma(\lambda_1(\omega_{h+1}))>\beta
\end{equation}
(where we recall that $\lambda_1(\omega_{h+1})$ is the first eigenvalue of the Laplace-Beltrami operator with Dirichlet boundary condition on $\omega_{h+1}$), then we easily reach a contradiction: indeed by the monotonicity of $\gamma$, and by the inequalities \eqref{bound v} and \eqref{claim absurd}, we deduce that
\begin{multline*}
\beta <\gamma(\lambda_1(\omega_{h+1})) \le \gamma \left( \int_{\pa B_1} |\nabla_\theta v_{h+1}|^2 \right)  \le \gamma\left(\liminf_{n \to \infty}  \int_{\pa B_1} |\nabla_\theta v_{h+1, R_n}|^2 \right) \\
\le \gamma\left( \liminf_{n \to \infty} \Lambda(R_n) \right) = \liminf_{n \to \infty} \gamma\left(  \Lambda(R_n) \right)  \le \beta,
\end{multline*}
a contradiction. To prove the \eqref{estimate on eigenvalue support limit}, we recall that $\lambda(\omega) \to +\infty$ as $\mathcal{H}^{N-1}(\omega) \to 0$: this implies that there exists $\delta>0$ such that 
\[
\mathcal{H}^{N-1}(\omega_{h+1}) < \delta \quad \Longrightarrow \quad \gamma(\lambda_1(\omega_{h+1}))>\beta,
\]
where we used the coercivity of $\gamma$. Therefore, the desired result follows if we show that 
\begin{equation}\label{eq: new thesis}
\mathcal{H}^{N-1}(\omega_{h+1}) < \delta. 
\end{equation}
As a first step, we observe that for every $n$
\begin{equation}\label{eq for v}
-\Delta v_{h+1,R_n} \le - R_n^2 H(\mf{u},0,R_n) \sum_{j=1}^h u_{j,R_n}^2 v_{h+1,R_n} \qquad \text{in $\R^N$},
\end{equation}
where we recall that $u_{j,R_n}(x)= u_j(R_n x)/H(\mf{u},0,R_n)^{1/2}$ (we emphasize the fact that the normalization of $u_{j,R_n}$ is different with respect to that of $v_{h+1,R_n}$). Moreover, having assumed that $u_{h+1} \not \equiv 0$, by the mean value inequality and Proposition \ref{prop: Almgren crescita puntuale}
\begin{equation}\label{alg growth v}
v_{h+1,R_n}(x) = \frac{u_{h+1}(R_n x)}{\left(\frac{1}{R_n^{N-1}} \int_{\pa B_{R_n}} u_{h+1}^2 \right)^{1/2}} \le \frac{C(1+R_n^d|x|^d)}{u_{h+1}^2(0)} \le C(1+R_n^d |x|^d)
\end{equation}
for every $x \in \R^N$, where $C>0$ is independent of $n$.

Now, by Lemma \ref{lem: misura a zero} there exists $\bar \eps>0$ such that for every $0<\eps<\bar \eps$
\begin{equation}\label{24gen1}
\mathcal{H}^{N-1} \left( D_{2\eps,r}^c \right) <\delta \qquad \text{provided $r>r_\eps$},
\end{equation}
for some $r_\eps>0$ sufficiently large. For the reader's convenience, we recall that $D_{2\eps,r}$ has been defined in \eqref{def D}. Clearly, there exists $\bar n_\eps$ such that $R_n>r_\eps$ whenever $n>\bar n_\eps$.  \\
In what follows we fix $\eps \in (0,\bar \eps)$, and we consider the blow-down sequence $\{\mf{u}_{R_n}: n >\bar n_\eps\}$. By usual arguments it is uniformly bounded in the $1/2$-H\"older norm in compact sets, it is convergent to a limiting profile $\mf{u}_{\infty} \in \mathcal{BD}_{\mf{u}}$ in $\mathcal{C}^0_{\loc}(\R^N)$, and thanks to Lemma \ref{lem: zero in all the blow down} we know that $u_{j,\infty} \equiv 0$ in $\R^N$ for $j=h+1,\dots,k$. For any $\rho>0$, let
\[
D_{\rho,\infty}:=\left\{ x \in \pa B_1: \sum_{i=1}^h u_{i,\infty}^2(x)>\rho \right\}.
\]
By uniform convergence, there exists $\bar n_\eps'$ such that
\[
\inf_{x \in D_{\eps,\infty}} \sum_{j=1}^h u_{j,R_n}^2(x) > \frac{\eps}{2} \qquad \text{for every $n>\bar n_\eps'$};
 \]
furthermore, by the uniform $1/2$-H\"older regularity of the sequence $\{\mf{u}_{R_n}\}$, there exists $\rho_\eps>0$ independent of $x_0 \in D_{\eps,\infty}$ and of $n>\bar n_\eps'$ such that
\begin{equation}\label{24gen2}
\sum_{j=1}^h u_{j,R_n}^2(x) >\frac{\eps}{4} \qquad \text{for every $x \in B_{\rho_\eps}(x_0)$}.
\end{equation}

Collecting equations \eqref{eq for v}, \eqref{alg growth v} and \eqref{24gen2}, we deduce that for every $x_0 \in D_{\eps,\infty}$ and for every $n>\bar n_\eps'$ we have
\[
\begin{cases}
-\Delta v_{h+1,R_n}  \le - \frac{\eps}{4} R_n^2 H(\mf{u},0,R_n) v_{h+1,R_n} & \text{in $B_{\rho_\eps}(x_0)$} \\
v_{h+1,R_n} \ge 0 & \text{in $B_{\rho_\eps}(x_0)$} \\
v_{h+1,R_n} \le C(1+R_n^d) & \text{in $B_{\rho_\eps}(x_0)$}.
\end{cases}
\]
As a consequence we can apply Lemma \ref{lem: exp decay}:
\[
v_{h+1,R_n}(x_0) \le C(1+R_n^d)e^{-C \eps R_n H(\mf{u},0,R_n)^{1/2} \rho_\eps} \le C(1+R_n^d)e^{-C \eps R_n \rho_\eps}
\]
for every $x_0 \in D_{\eps,\infty}$, for every $n>\bar n_\eps'$; notice that $H(\mf{u},0,R_n) \ge C >0$ by Proposition \ref{prop: doubling}-($i$). Passing to the limit as $n \to \infty$, since $\eps$ is fixed we infer that $v_{h+1}(x_0)=0$ for every $x_0 \in D_{\eps,\infty}$, that is, $\supp(\omega_{h+1}) \subset D_{\eps,\infty}^c$. Thus, to complete the proof it remains to show that the measure of $D_{\eps,\infty}^c$ in $\pa B_1$ is sufficiently small. 
By uniform convergence there exists $\bar n_\eps''$ such that 
\[
\sup_{x \in D_{\eps,\infty}} \sum_{j=1}^h u_{j,R_n}^2(x) \le \frac{3\eps}{2} \qquad \text{ for every $n>\bar n_\eps''$};
\]
in other worlds, $D_{\eps,\infty}^c \subset D_{2\eps,R_n}^c$ for every $n>\bar n_{\eps}''$, and thanks to the estimate \eqref{24gen1} we deduce that $\mathcal{H}^{N-1}(D_{\eps,\infty}^c) < \delta$; in particular the \eqref{eq: new thesis} holds, proving the thesis.
\end{proof}

We are ready to complete the proof of Proposition \ref{thm: zero total}. The basic idea is that the monotonicity formula proved in the previous lemma imposes a minimal growth rate on the function $J$ for $r$ large, while the algebraic growth of $\mf{u}$ gives a maximal growth rate, and having chosen $\beta>2d$ these two estimates are in contradiction. This kind of argument is by now well understood (see for instance the proofs of Proposition 7.1 in \cite{CoTeVe} and of Proposition 2.6 in \cite{NoTaTeVe}), even though it is usually employed on groups of components rather than on a single one. This is related to the fact that in the present setting, using the assumption $u_{h+1,R} \to 0$ as $R \to +\infty$, we could prove that the ``asymptotic support" of $u_{h+1}$ (the set $\omega_{h+1}$ in the previous proof) is arbitrarily small, thus proving a one-phase monotonicity formula.

\begin{proof}[Conclusion of the proof of Proposition \ref{thm: zero total}]
By Lemma \ref{lem: monotonicity}, it results that
\begin{equation}\label{stima 1}
\int_{B_r} f(|x|)\left( |\nabla u_{h+1}|^2 + u_{h+1}^2 \sum_{j =1}^h u_j^2\right) \ge C r^\beta
\end{equation}
for every $r>r_\beta$. On the other hand, 
let us test the inequality \eqref{sub h+1} against $\eta^2 f(|x|) u_{h+1}$, where $\eta$ is a smooth cut of function such that $\eta \equiv 1$ in $B_r$, $\eta \equiv 0$ in $\R^N \setminus B_{2r}$, and $|\nabla \eta| \le C/r$ in $\R^N$. By means of some integrations by parts, we obtain
\begin{multline*}
\int_{\R^N} \eta^2 f(|x|)\left( |\nabla u_{h+1}|^2 + u_{h+1}^2 \sum_{j =1}^h u_j^2\right) \\ \le - \int_{\R^N} \left[ 2u_{h+1} \eta f(|x|) \nabla u_{h+1} \cdot \nabla \eta + u_{h+1} \eta^2 \nabla u_{h+1} \cdot \nabla f(|x|) \right] \\
 \le \int_{\R^N} \left[ 2 f(|x|) u_{h+1}^2 |\nabla \eta|^2 + \frac{1}{2} f(|x|)\eta^2|\nabla u_{h+1}|^2 -  \eta^2 \nabla \left(\frac{u_{h+1}^2}{2} \right) \cdot \nabla f(|x|) \right] \\
 = \int_{\R^N} \left[ 2 f(|x|) u_{h+1}^2 |\nabla \eta|^2 + \frac{1}{2} f(|x|)\eta^2|\nabla u_{h+1}|^2 \right. \\
 \left.- \nabla \left(\frac{\eta^2 u_{h+1}^2}{2}\right) \cdot \nabla f(|x|)+ u_{h+1}^2 \eta \nabla \eta \cdot \nabla f(|x|) \right].
\end{multline*} 
Since $f(|x|)$ is superharmonic (recall the definition \eqref{def di f}), it results that
\[
-\int_{\R^N} \nabla \left(\eta^2 u_{h+1}^2\right) \cdot \nabla f(|x|) = \int_{\R^N} \eta^2 u_{h+1}^2  \Delta f(|x|) \le 0,
\]
and as a consequence
\[
\int_{\R^N} \eta^2 f(|x|)\left( |\nabla u_{h+1}|^2 + u_{h+1}^2 \sum_{j =1}^h u_j^2\right) \le 2 \int_{\R^N} \left[ 2 f(|x|) u_{h+1}^2 |\nabla \eta|^2 + u_{h+1}^2 \eta \nabla \eta \cdot \nabla f(|x|) \right].
\]
By the choice of $\eta$ and the definition of $f$ \eqref{def di f}, we infer
\begin{equation}\label{stima 2}
\begin{split}
\int_{B_r} f(|x|)\left( |\nabla u_{h+1}|^2 + u_{h+1}^2 \sum_{j =1}^h u_j^2\right) & \le \int_{B_{2r} \setminus B_r} \left[ 2 f(|x|) u_{h+1}^2 |\nabla \eta|^2 + u_{h+1}^2 \eta \nabla \eta \cdot \nabla f(|x|) \right] \\
& \le \frac{C}{r^2} \int_{B_{2r} \setminus B_r} \frac{u_{h+1}^2}{|x|^{N-2}} + \frac{C}{r} \int_{B_{2r} \setminus B_r} \frac{u_{h+1}^2}{|x|^{N-1}} \\
& \le \frac{C}{r^2} \int_r^{2r} \rho^{2d+1} \, \de\rho + \frac{C}{r} \int_r^{2r} \rho^{2d}\, \de\rho =C r^{2d},
\end{split}
\end{equation}
where we used the fact that $u_{h+1}(x) \le C(1+|x|^d)$ for every $x \in \R^N$. Having chosen $\beta>2d$, a comparison between \eqref{stima 1}  and \eqref{stima 2} gives a contradiction for $r$ sufficiently large.
\end{proof}  

\section{Liouville-type theorems for system \eqref{system}}\label{sec: Liouv 2}

The aim of this section is to prove Theorems \ref{thm: Liouville dim 2} and \ref{thm: higher dim}, and Corollary \ref{protothm 2}.

\subsection{$2$-dimensional case}

\begin{proof}[Proof of Theorem \ref{thm: Liouville dim 2}]
Let $\mf{u}$ be a positive solution of \eqref{system} such that  $N(\mf{u},0,+\infty) = d \in (0,+\infty)$, and let us consider the blow-down family $\{\mf{u}_{R}\}$. By Theorem \ref{them: blow down k dim 2}, up to a subsequence and up to a rotation 
\[
\mf{u}_R \to (\chi_{A_1},\dots,\chi_{A_k})\pi^{-1/2} r^d \sin(d\theta) \qquad \text{as $R \to +\infty$}
\]
uniformly on compact sets and in $H^1_{\loc}(\R^N)$. Moreover, $\chi_{A_i} \neq 0$ for every $i$, since otherwise by Proposition \ref{thm: zero total} we would have $u_i \equiv 0$ in $\R^N$, in contradiction with the positivity of the considered solution. Now, any $A_i$ is the union of non-adjacent nodal domains of the function $r^d \sin (d\theta)$; each nodal domain is a cone of angle $\pi/d$, so that we have exactly $2d$ nodal domains. Since for every $i$ the positivity domain $A_i$ contains at least one cone, we deduce that $k \le 2d$. 
\end{proof}

\subsection{Higher dimensional case}\label{sec: higher dimension}

We need a monotonicity result for the dependence of $\mathcal{L}_k(\S^{N-1})$ with respect to $k$.

\begin{lemma}\label{lem: monot part}
For every $N,k \ge 2$, it results that $\mathcal{L}_{k+1}(\S^{N-1})>\mathcal{L}_k(\S^{N-1})$.
\end{lemma}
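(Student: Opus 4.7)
The plan is to establish the inequality in two stages: a non-strict bound via merging adjacent components, then strictness via an equi-partition argument. For the non-strict version $\mathcal{L}_{k+1}(\S^{N-1}) \ge \mathcal{L}_k(\S^{N-1})$, given any $(k+1)$-partition $(\omega_1',\dots,\omega_{k+1}')$ of $\S^{N-1}$, connectedness of the sphere together with the fact that the components are open and mutually disjoint forces at least one pair to be adjacent, in the sense that their boundaries meet in a set of positive $(N-2)$-dimensional Hausdorff measure. Relabelling, assume $\omega_k'$ and $\omega_{k+1}'$ are such a pair and set $\tilde\omega_k:=\Int(\overline{\omega_k'\cup\omega_{k+1}'})$: this is open, connected, and strictly contains $\omega_k'$. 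Replacing the pair by $\tilde\omega_k$ yields a $k$-partition, and by monotonicity of the Dirichlet first eigenvalue under domain inclusion,
\[
\lambda_1(\tilde\omega_k)\le\min\{\lambda_1(\omega_k'),\lambda_1(\omega_{k+1}')\},
\]
so the maximum eigenvalue of the new partition is no larger than before. Passing to the infimum over $(k+1)$-partitions gives $\mathcal{L}_k\le\mathcal{L}_{k+1}$.

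For strictness I would argue by contradiction, assuming $\mathcal{L}_{k+1}=\mathcal{L}_k=:L$. Invoking the existence of an optimal $(k+1)$-partition (Remark~\ref{rem: su k=2} and \cite{CoTeVePDE}), pick $(\omega_1',\dots,\omega_{k+1}')$ attaining $L$, and perform the merging above. Because the enlargement $\omega_k'\subsetneq\tilde\omega_k$ adds a genuine open neighbourhood of the shared interface, the \emph{strict} form of domain monotonicity of $\lambda_1$ on $\S^{N-1}$ gives $\lambda_1(\tilde\omega_k)<\lambda_1(\omega_k')\le L$. If the remaining $k-1$ components all had $\lambda_1<L$ as well, then the merged $k$-partition would directly witness $\mathcal{L}_k<L$, a contradiction. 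To rule out the opposite case, I would invoke the equi-partition property of optimal spectral partitions on $\S^{N-1}$: in any minimiser of $\mathcal{L}_k$, every component must attain the common value $L$. The merged partition above contains one component $\tilde\omega_k$ with $\lambda_1(\tilde\omega_k)<L$, which is already incompatible with equi-partition for $\mathcal{L}_k$; alternatively, one can transfer a small open piece from the slack set $\tilde\omega_k$ into each neighbour whose eigenvalue still equals $L$, and by continuity of $\lambda_1$ under small domain perturbations this strictly lowers those eigenvalues below $L$ while keeping $\lambda_1(\tilde\omega_k)<L$, contradicting the definition of $\mathcal{L}_k$.

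The principal obstacle is justifying the equi-partition property, together with the continuity/monotonicity facts for $\lambda_1$ under small domain perturbations on $\S^{N-1}$. Equi-partition can be proved along similar lines to the above merging: if some component in an optimal $k$-partition had eigenvalue strictly below $L$, one could enlarge its adjacent neighbours at its expense and, using continuity of the first Dirichlet eigenvalue, strictly reduce those neighbours' eigenvalues while keeping the slack component below $L$, thereby improving the min-max and contradicting optimality. Both this fact and the strict domain monotonicity of $\lambda_1$ are by now classical in the Helffer--Hoffmann-Ostenhof--Terracini theory of minimal spectral partitions, so the delicate step reduces to verifying their applicability in the spherical setting; once granted, the strict inequality $\mathcal{L}_{k+1}(\S^{N-1})>\mathcal{L}_k(\S^{N-1})$ follows at once from the merging construction.
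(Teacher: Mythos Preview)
Your argument is essentially the paper's own proof: merge two adjacent sets of an optimal $(k+1)$-partition to get a $k$-partition, use strict domain monotonicity of $\lambda_1$ to see the merged set has strictly smaller eigenvalue, and contradict the equi-partition property of minimisers (Theorem~3.4 in \cite{HeHHTer1}). One small slip: in your non-strict step you claim that for \emph{any} $(k+1)$-partition connectedness of $\S^{N-1}$ forces two components to be adjacent, but the paper's definition of partition does not require the $\omega_i$ to cover the sphere, so this can fail (take $k+1$ small disjoint caps); the paper sidesteps this by working from the outset with an \emph{optimal} $(k+1)$-partition, which does exhaust $\S^{N-1}$, and you do the same in your strictness step, so the argument goes through once you either start from an optimal partition or simply drop a component when no adjacency is available.
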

\begin{proof}
Let $\omega=(\omega_1,\dots,\omega_{k+1})$ be an optimal $(k+1)$-partition for $\mathcal{L}_{k+1}(\S^{N-1})$ (the existence of $\omega$ is given by Theorem 3.4 in \cite{HeHHTer1}). Up to a relabelling, we can assume that $\omega_1$ and $\omega_{k+1}$ are adjacent sets of the partition. We consider the connected set $\omega_1'=\Int(\overline{\omega_1 \cup \omega_{k+1}})$, and the $k$-partition $\omega'=(\omega_1',\omega_2,\dots,\omega_k)$ . Since clearly $\lambda_1(\omega_1') \le \lambda_1(\omega_1)$, we deduce that
\[
\max\{\lambda_1(\omega_1'),\lambda_1(\omega_2),\dots,\lambda_1(\omega_k)\} = \max_{i=1,\dots,k+1} \lambda_1(\omega_i) = \mathcal{L}_{k+1}(\S^{N-1}),
\]
which in turn yields $\mathcal{L}_{k+1}(\S^{N-1}) \ge \mathcal{L}_k(\S^{N-1})$. To show that the strict inequality holds, we assume by contradiction that the values are equal. Arguing as in the first part of the proof, from an optimal $(k+1)$-partition $\omega$ for $\mathcal{L}_{k+1}(\S^{N-1})$ we can construct a $k$-partition $\omega''=(\omega_1'',\omega_2,\dots,\omega_k)$ such that
\[
\max\{\lambda_1(\omega_1''),\lambda_1(\omega_2),\dots,\lambda_1(\omega_k)\} = \max_{i=1,\dots,k+1} \lambda_1(\omega_i) = \mathcal{L}_{k+1}(\S^{N-1})= \mathcal{L}_k(\S^{N-1}),
\]
that is $\omega''$ is optimal for $\mathcal{L}_k(\S^{N-1})$. But
$\lambda_1(\omega_1'') \neq \lambda_1(\omega_j)$ for every $j \neq 1$, in contradiction with the fact that for any optimal $k$-partition $\bar \omega$ it holds $\lambda_1(\bar \omega_i)= \lambda_1(\bar \omega_j)$ for every $i \neq j$, see Theorem 3.4 in \cite{HeHHTer1}.
%
%
%
\end{proof}

\begin{proof}[Proof of Theorem \ref{thm: higher dim}]
We consider a positive solution $\mf{u}$ of \eqref{system}. Arguing as in the previous proof, u.t.s. the blow-down family is convergent to a limiting profile $r^d \mf{g}(\theta)$ uniformly on compact sets and in $H^1_{\loc}(\R^N)$, see Theorem \ref{them: blow down k dim 2}. We point out that thanks to Proposition \ref{thm: zero total} and having assumed that $\mf{u}$ is positive, $g_i \not \equiv 0$ in $\S^{N-1}$ for every $i$. Now, since $r^d g_i(\theta)$ is harmonic in its positivity domain, we have
\[
\begin{cases}
-\Delta_{\S^{N-1}} g_i = d(d+N-2) g_i & \text{in $\{g_i>0\}$}\\
g_i=0 & \text{on $\pa \{g_i>0\}$}.
\end{cases}
\]
Let $A_i:= \{g_i>0\}$, and let $\omega_{i,1},\dots,\omega_{i,l_i}$ be the nodal domains of $g_i$. We observe that since the functions $g_i$ have disjoint support
\[
\omega:=(\omega_{1,1},\dots,\omega_{1,l_1},\omega_{2,1},\dots,\omega_{2,l_2}, \dots, \omega_{k,1},\dots,\omega_{k,l_k}) = (\omega_i: 1 \le i \le \tilde k)
\] 
is a $\tilde k$-partition of $\S^{N-1}$ for some $\tilde k \ge k$, and is such that
\[
d(d+N-2) = \max_{i=1,\dots,\tilde k} \lambda_1(\omega_i) \ge \mathcal{L}_{\tilde k}(\S^{N-1}) \ge \mathcal{L}_k(\S^{N-1}),
\] 
where we used the definition \eqref{def di L_k} of $\mathcal{L}_k(\S^{N-1})$ and Lemma \ref{lem: monot part}. Recalling the definition \eqref{def di gamma} of $\gamma$, this is equivalent to $d \ge \gamma(\mathcal{L}_k(\S^{N-1}))$. Notice also that if the equality holds, then necessarily $\tilde k=k$, and $(\omega_1,\dots,\omega_k)$ is an optimal partition for $\mathcal{L}_k(\S^{N-1})$.
\end{proof}

\subsection{Proof of Corollary \ref{protothm 2}}\label{sub: 3 comp} 

For point ($i$), as already observed in the introduction (see Remark \ref{rem: su k=2}), we can either recall that $\gamma(\mathcal{L}_2(\S^{N-1}))=1$ for every $N$ and use Theorem \ref{thm: higher dim}, or apply Corollary \ref{corol general}, whose proof is independent of the argument we developed so far. Thus in what follows we focus on the jump in the admissible values of $N(\mf{u},0,+\infty)$ and on point ($ii$) of the thesis.

\begin{lemma}\label{lem: 26 marzo}
Let $\mf{v} \in \mathcal{G}_{\loc}^*(\R^N)$ be homogeneous with respect to $0$. Then
\[
\text{either} \quad \tilde N(\mf{v},0,0^+) = 1, \quad \text{or} \quad  \tilde N(\mf{v},0,0^+) \ge \frac{3}{2}.
\]
\end{lemma}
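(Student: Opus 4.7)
The plan is to reduce the statement to a dichotomy on the homogeneity degree, and then to split on the number of nontrivial components of $\mf{v}$.

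\emph{Step 1: reduction.} Since $\mf{v}\in\mathcal{G}^*_{\loc}(\R^N)$ is homogeneous of degree $\gamma>0$ with respect to $0$, Proposition \ref{prop: monot segregated} yields $\tilde N(\mf{v},0,r)\equiv\gamma$ for every $r>0$, so $\tilde N(\mf{v},0,0^+)=\gamma$, and the lemma reduces to proving that either $\gamma=1$ or $\gamma\ge 3/2$. Writing $v_i(r,\theta)=r^\gamma g_i(\theta)$ and $\omega_i:=\{g_i>0\}\subset\S^{N-1}$, the harmonicity of $v_i$ in its positivity domain translates, on every connected component of $\omega_i$ where $g_i>0$, into the Dirichlet eigenvalue problem
\[
-\Delta_{\S^{N-1}} g_i=\gamma(\gamma+N-2)\, g_i,\qquad g_i=0 \text{ on the relative boundary},
\]
so $\lambda_1$ of each such component equals $\gamma(\gamma+N-2)$. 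By Definition \ref{def: G *}, at least two components of $\mf{v}$ are nontrivial, and I would distinguish the following two cases.

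\emph{Step 2: at least three nontrivial components.} I would pick three indices $i_1,i_2,i_3$ with $v_{i_\ell}\not\equiv 0$ and one connected component $\omega_{i_\ell}^*$ of $\omega_{i_\ell}$ on which $g_{i_\ell}>0$. The triple $(\omega_{i_1}^*,\omega_{i_2}^*,\omega_{i_3}^*)$ belongs to $\mathcal{P}_3(\S^{N-1})$ and satisfies $\max_\ell\lambda_1(\omega_{i_\ell}^*)=\gamma(\gamma+N-2)$. By definition of $\mathcal{L}_3$ and by Theorem \ref{thm: on optimal partition},
\[
\gamma(\gamma+N-2)\;\ge\;\mathcal{L}_3(\S^{N-1})\;=\;\frac{3}{2}\left(\frac{3}{2}+N-2\right),
\]
and since $t\mapsto t(t+N-2)$ is strictly increasing on $[0,+\infty)$ when $N\ge 2$, I conclude $\gamma\ge 3/2$.

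\emph{Step 3: exactly two nontrivial components $v_1,v_2$.} By Theorem 1.1 of \cite{TaTe} (applicable to $\mathcal{G}^*_{\loc}(\R^N)$, see Remark \ref{rem: vale TaTe sui blow-down}) the zero level set $\{\mf{v}=\mf{0}\}$ has null $N$-dimensional Lebesgue measure, hence $\omega_1\cup\omega_2$ exhausts $\S^{N-1}$ up to an $(N-1)$-null set, and every connected component of $\omega_1$ is adjacent to some component of $\omega_2$ and conversely. Combining the reflection law of the same theorem across each adjacent pair with elliptic regularity, I would show that the function which equals $g_1$ on $\omega_1$ and $-g_2$ on $\omega_2$ extends to a smooth eigenfunction $g$ of $-\Delta_{\S^{N-1}}$ on the whole sphere with eigenvalue $\gamma(\gamma+N-2)$. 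Since the spectrum of $-\Delta_{\S^{N-1}}$ is $\{k(k+N-2):k=0,1,2,\dots\}$ and $t\mapsto t(t+N-2)$ is injective on $[0,+\infty)$, this forces $\gamma\in\{0,1,2,\dots\}$; the positivity of $\gamma$ then yields $\gamma\ge 1$, so either $\gamma=1$ or $\gamma\ge 2\ge 3/2$. Together with Step 2, this will establish the lemma.

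The main obstacle is the globalisation carried out in Step 3: passing from the local reflection law across each adjacent pair of nodal components to the statement that the piecewise-defined function is a genuinely smooth spherical eigenfunction requires a careful combination of the null-measure property of the zero set, the $C^1$-matching supplied by the reflection law, and elliptic regularity. This is also the only point in the plan where the specific structure of the two-component case is exploited, and the part that leans most heavily on the external machinery of \cite{TaTe}; the remainder of the argument is a direct application of Theorem \ref{thm: on optimal partition} and of the Almgren-type constancy already at our disposal.
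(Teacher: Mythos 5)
Your plan is circular at Step~2, and the circularity is fatal rather than cosmetic. You invoke Theorem~\ref{thm: on optimal partition} to obtain $\mathcal{L}_3(\S^{N-1}) = \tfrac{3}{2}\bigl(\tfrac{3}{2}+N-2\bigr)$, but look at how that theorem is actually established in the paper: its proof takes the optimal $3$-partition $\omega$ of $\S^{N-1}$, builds from it a homogeneous $\mf{v}\in\mathcal{G}^*_{\loc}(\R^N)$, and then \emph{applies Lemma~\ref{lem: 26 marzo}} to conclude that $\gamma(\mathcal{L}_3(\S^{N-1}))$ must be either $1$ or $\ge 3/2$, ruling out the value $1$ by a splitting argument. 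In other words, the value of $\mathcal{L}_3(\S^{N-1})$ in dimensions $N\ge 4$ is a \emph{consequence} of Lemma~\ref{lem: 26 marzo}, not an available input to its proof. Before this paper, the identification $\gamma(\mathcal{L}_3(\S^{N-1}))=3/2$ was known only for $N=3$ (from \cite{HeHHTer2}); the monotonicity $\mathcal{L}_3>\mathcal{L}_2$ together with $\gamma(\mathcal{L}_2)=1$ gives you only $\gamma>1$, which is not enough to rule out values in $(1,3/2)$. So Step~2 as written does not stand.

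This also means your decomposition (split on whether $\mf{v}$ has two or at least three nontrivial components) does not match the structure of the problem, because the three-or-more-component case is exactly where the quantitative gap $3/2$ has to be \emph{produced}, not cited. The paper's proof avoids this entirely: it argues by induction on the dimension $N$, with base case $N=2$ from \cite{TaTe}. For $N\ge 3$ it splits instead on the behaviour of the frequency $\tilde N(\mf{v},x_0,0^+)$ at points $x_0\in\{\mf{v}=\mf{0}\}\cap\S^{N-1}$: if that frequency equals $1$ at every such point, the nodal set on the sphere is a union of disjoint compact $(N-2)$-dimensional $\mathcal{C}^{1,\alpha}$ hypersurfaces, and a graph/Jordan-curve argument lets one glue a genuine spherical harmonic, forcing $\gamma\in\N$ (regardless of the number of components); if instead some $x_0$ has frequency $\sigma>1$, a blow-up at $x_0$ produces a homogeneous segregated configuration which, thanks to the global homogeneity of $\mf{v}$, is invariant in the $x_0$-direction, hence lives in $\R^{N-1}$, and the inductive hypothesis gives $\sigma\ge 3/2$; monotonicity of $\tilde N(\mf{v},x_0,\cdot)$ and constancy of $\tilde N(\mf{v},\cdot,+\infty)$ then transport this bound back to $\tilde N(\mf{v},0,0^+)$. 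Note in particular that the gluing/spherical-harmonic argument in the paper is carried out under the hypothesis of frequency $1$ everywhere on $\Gamma_{\mf{v}}$, precisely because without that hypothesis the nodal set can have singular points where the piecewise reflection does not obviously extend to a smooth eigenfunction; your Step~3 has no such safeguard and would need to justify why, in the two-component case, no such singular point can occur. That is a secondary gap compared with the circularity in Step~2, but it is a real one: the ``globalisation'' you flag as the main obstacle is indeed not automatic, and the paper's way of handling it is precisely the dimension-reduction induction that replaces your Step~2.
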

For the reader's convenience, we recall the definitions of $\tilde N(\mf{v},0,r)$ and of and $\mathcal{G}_{\loc}^*(\R^N)$, see \eqref{def N segregated} and Definition \ref{def: G *}.
\begin{proof}
By homogeneity $0 \in \{\mf{v}=\mf{0}\}$, and hence by Proposition 4.2 in \cite{TaTe} the assertion holds true in dimension $N=2$. Now we show how to exploit a blow-up analysis in order to lower the dimension: we proceed by induction assuming that the result holds in dimension $N-1$, and proving that then it holds in dimension $N \ge 3$. Let $\mf{v}= r^d \mf{g}(\theta) \in \mathcal{G}_{\loc}^*(\R^{N})$, and let $\Gamma_{\mf{v}}:= \{\mf{v}=\mf{0}\} \cap \S^{N-1}$.

\emph{Case 1) $\tilde N(\mf{v},x_0,0^+)=1$ for every $x_0 \in \Gamma_{\mf{v}}$.} By the results in \cite{TaTe}, and observing that by homogeneity of $\mf{v}$ all the components of $\{\mf{v}=\mf{0}\}$ intersect $\S^{N-1}$ transversally, this implies that 
\begin{equation}\label{character gamma v}
\begin{split}
&\text{$\Gamma_{\mf{v}}$ is the union of disjoint compact $(N-2)$-dimensional}\\
&\text{hyper-surfaces of class $\mathcal{C}^{1,\alpha}$ without boundary.}
\end{split}
\end{equation} 
Let us denote by $\varphi_i$, $i=1,\dots$ the connected components of $\Gamma_{\mf{v}}$, by $\omega_i$, $i=1,\dots$ the connected components of $\S^{N-1} \setminus \Gamma_{\mf{v}}$, and by $\hat \omega_i \subset \R^N$ the cone projecting onto $\omega_i$. Let $\omega_i$ and $\omega_j$ be such that $\hat\omega_i \subset \supp\{v_h\}$, $\hat \omega_j \subset \supp\{v_k\}$ for some $h,k$. If $\omega_i$ and $\omega_j$ are adjacent, then since $\mf{v} \in \mathcal{G}_{\loc}^*(\R^N)$ we know that 
\[
-\Delta(v_h-v_k) = 0 \quad \text{in $\Int\left(\overline{ \hat{\omega_i} \cup \hat \omega_j}\right)$}.
\]
Thus, if we can apply this argument for every pair of adjacent connected components of $\S^{N-1} \setminus \Gamma_{\mf{v}}$, alternating the sign of $v_h$ and $v_k$ without any contradiction (it is not admissible to obtain two adjacent $+$ or two adjacent $-$), we can construct a spherical harmonic $\Psi_d$ such that such that
\[
\mf{v}(r,\theta)=\left(\chi_{A_1},\dots,\chi_{A_k}\right) r^d \Psi_d(\theta),
\]
where $d$ is a suitable integer, $(A_1,\dots,A_k)$ is a partition of $\Sigma_{\Psi_d}=\{\Psi_d \ne 0\}$, and $A_i$ is the union of non-adjacent nodal domains of $\Sigma_{\Psi_d}$. If this is possible, the thesis follows easily, as we deduce that the Almgren frequency function $\tilde N(\mf{v},0,r)$ is on one side constant and equal to $d$ by homogeneity (see Proposition \ref{prop: monot segregated}), while, on the other side, it is a positive integer by the harmonicity of $r^d \Psi_d$. So, if it is different from $1$, it has to be larger than or equal to $2 > 3/2$.

To prove that the definition of the spherical harmonic $\Psi_d$ is possible, we construct a graph $\mathcal{A}$ as follows: each vertex of $\mathcal{A}$ represents one component $\omega_i$, and two vertexes $\omega_i$ and $\omega_j$ are connected by an edge if they are adjacent. What in principle could prevent the possibility of constructing $\Psi_d$ is the occurrence of a loop in this graph. 

In the present setting \eqref{character gamma v} holds true, and by a generalization of the Jordan curve theorem this implies that each $\gamma_i$ separates $\S^{N-1}$ in exactly two connected components, say \emph{north} and \emph{south}. Notice that, being $\gamma_i$ regular, $\gamma_i \cap \overline{\omega_k} \neq \emptyset$ for exactly two indexes $k$. Therefore, any components in north cannot be adjacent to any component in south but at most one, and hence if a component $\omega_i$ in south is adjacent to a component $\omega_j$ in north, then $\omega_i$ cannot be adjacent to any other component $\omega_k$ which is adjacent to $\omega_j$. Since this holds for each $\gamma_i$, we deduce that there are no loops in $\mathcal{A}$, and hence the construction of $\Psi_d$ is possible. As observed, this proves the desired result in the considered situation.

\emph{Case 2) There exists $x_0 \in \Gamma_{\mf{v}}$ such that $\tilde N(\mf{v},x_0,0^+) >1$.} We consider a blow-up in a neighbourhood of $x_0$ by introducing, for any $\rho>0$,
\[
v_{i,\rho}(x):= \frac{v_{i}(x_0+ \rho x)}{H(\mf{v},x_0,\rho)^{1/2}}. 
\] 
Since $\mf{v} \in \mathcal{G}_{\loc}^*(\R^N)$, we are in position to apply Theorem 3.3 and Corollary 3.12 in \cite{TaTe}: there exists a sequence $\rho_m \to 0$ such that $\mf{v}_{m}:=\mf{v}_{\rho_m} \to \bar{\mf{v}}$ as $m \to +\infty$ in $\mathcal{C}^{0,\alpha}_{\loc}(\R^N)$ for every $0<\alpha<1$ and strongly in $H^1_{\loc}(\R^N)$. Furthermore, also $\bar{\mf{v}}(r,\theta)=r^{\sigma} \mf{g}(\theta)$ belongs to the class $\mathcal{G}_{\loc}^*(\R^N)$. Here $\sigma= \tilde N(\mf{v},x_0,0^+)>1$ by assumption. It is now crucial to observe that, thanks to the homogeneity of $\mf{v}$ with respect to $0$, the function $\bar{\mf{v}}$ depends only on $N-1$ variables. To be precise, we claim that
\begin{equation}\label{v n-1}
\bar{\mf{v}}(x+\lambda x_0)=\bar{\mf{v}}(x) \qquad \text{for every $\lambda>0$ and $x \in \R^N$}.
\end{equation}
Since $\mf{v}_{m} \to \bar{\mf{v}}$ in $\mathcal{C}^{0,\alpha}_{\loc}(\R^N)$, it is sufficient to show that $\mf{v}_{m}(x+\lambda x_0)-\mf{v}_{m}(x) \to 0$ as $m \to \infty$ for any $\lambda>0$ and $x \in \R^N$. Let $q$ be the degree of homogeneity of $\mf{v}$ with respect to $0$ ($q$ can be different from $\sigma$). We have
\begin{multline*}
\mf{v}_{m}(x+\lambda x_0)  = \frac{\mf{v}(x_0+\rho_m(x+\lambda x_0)) }{H(\mf{v},x_0,\rho_m)^{1/2}}  = \frac{\mf{v}((1+\lambda \rho_m) x_0+\rho_m x) }{H(\mf{v},x_0,\rho_m)^{1/2}} \\
 = \frac{(1+\lambda \rho_m)^{q}}{H(\mf{v},x_0,\rho_m)^{1/2}} \mf{v}\left(x_0+\frac{\rho_m}{1+\lambda \rho_m} x \right) = (1+\lambda \rho_m)^{q} \mf{v}_m\left( \frac{x}{1+\lambda \rho_m} \right).
\end{multline*}
Let $K$ be a compact set containing both $x$ and $x/(1+\lambda \rho_m)$ for $m$ sufficiently large. Then the local $\mathcal{C}^{0,\alpha}$ convergence of $\mf{v}_m$ to $\bar{\mf{v}}$ implies that
\begin{multline*}
|\mf{v}_{m}(x+\lambda x_0)-\mf{v}_{m}(x)| \le \left| (1+\lambda \rho_m)^q \mf{v}_{m}\left( \frac{x}{1+\lambda \rho_m} \right)- \mf{v}_{m}\left( \frac{x}{1+\lambda \rho_m} \right)\right| \\
 + \left| \mf{v}_{m}\left( \frac{x}{1+\lambda \rho_m} \right)-\mf{v}_{m}(x) \right|
 \le C \left|(1+\lambda \rho_m)^{q}-1\right| + C \left| \frac{1}{1+\lambda \rho_m}-1\right|^{\alpha} |x|^{\alpha},  
\end{multline*}  
from which the claim \eqref{v n-1} follows. Now, up to a rotation we can assume that $x_0=(0,\dots,0,1)$, so that by \eqref{v n-1} we infer that $\bar{\mf{v}}$ is independent on $x_N$, and we recall that $\bar{\mf{v}} \in \mathcal{G}_{\loc}^*(\R^N)$ is homogeneous with homogeneity degree $\sigma>1$. It is then clear that the restriction $\bar{\mf{v}}|_{\R^{N-1} \times \{0\}}$ belongs to $\mathcal{G}_{\loc}^*(\R^{N-1})$ with the same degree $\sigma$, so that by inductive assumption and by homogeneity we have
\begin{align*}
1< \sigma &=\tilde N(\mf{v},x_0,0^+) = \lim_{m \to \infty} \tilde N(\mf{v},x_0,\rho_m) = \lim_{m \to \infty} \tilde N(\mf{v}_m,0,1) \\
& = \tilde N(\bar{\mf{v}},0,1) = \tilde N(\bar{\mf{v}}|_{\R^{N-1} \times \{0\}},0,1)  = \tilde N(\bar{\mf{v}}|_{\R^{N-1} \times \{0\}},0,0^+) 
\end{align*}
has to be larger than $3/2$. Using another time Proposition \ref{prop: monot segregated} and Lemma \ref{lem: constancy of N segregated}, we conclude that
\[
\frac{3}{2} \le \sigma = \tilde N(\mf{v},x_0,0^+) \le \tilde N(\mf{v},x_0,+\infty) = \tilde N(\mf{v},0,+\infty) = \tilde N(\mf{v},0,0^+),
\]
which completes the proof.
\end{proof}

\begin{remark}
In \cite{TaTe} it has been proved that if $\mf{v} \in \mathcal{G}_{\loc}(\R^N)$ then either $\tilde N(\mf{v},0,0^+) = 1$ or $\tilde N(\mf{v},0,0^+) \ge 1+ \delta_N$ for some $\delta_N>0$ depending only on the dimension $N$. The extra information which we obtain in Lemma \ref{lem: 26 marzo} is the precise value of $\delta_N$ (which is remarkably independent of $N$) under the extra-homogeneity assumption.
\end{remark}

\begin{remark}
The quoted generalization of the Jordan curve theorem is false when $N=2$, as in such case $(N-2)$ dimensional sets are points, and a point does not divide $\S^1$ in two connected components. There is no contradiction with our argument, since the case $N=2$ follows from \cite{TaTe}, where it is treated with a different method. 
\end{remark}

We are now in position to compute the optimal value $\mathcal{L}_3(\S^{N-1})$ in any dimension $N \ge 3$. 

\begin{proof}[Proof of Theorem \ref{thm: on optimal partition}]
We aim at proving that, for every $N \ge 3$, there holds 
\[
\frac{3}{2}\left(\frac{3}{2}+ N-2\right) = \mathcal{L}_3(\S^{N-1}). 
\]
By definition it is equivalent to show that $3/2 =\gamma(\mathcal{L}_3(\S^{N-1}))$. The identity is satisfied when $N=3$ in light of Theorem 1.1 in \cite{HeHHTer2}. We observe that the optimal partition in dimension $3$ provides an admissible partition in any dimension, so that $\gamma(\mathcal{L}_3(\S^{N-1}))\le 3/2$ for every $N$. By Theorem 3.4 in \cite{HeHHTer1} there exists an optimal partition $\omega \in \mathcal{P}_3(\S^{N-1})$ achieving $\mathcal{L}_3(\S^{N-1})$, and such that $\lambda_1(\omega_i)=\lambda_1(\omega_j)$ for every $i \neq j$; moreover, there exists eigenfunctions $\{\varphi_i\}$ corresponding to $\{\lambda_1(\omega_i)\}$ such that 
\[
-\Delta_{\S^{N-1}} \left(\varphi_i-\varphi_j\right) = \mathcal{L}_3(\S^{N-1}) \left(\varphi_i-\varphi_j\right) \qquad \text{in $\Int(\overline{\omega_{i,k} \cup \omega_{j,h}})$}
\]
where $\omega_{i,k}$ and $\omega_{j,k}$ are adjacent connected components of $\omega_i$ and $\omega_j$, respectively. Let $\mf{v}:=r^{\gamma(\mathcal{L}_3(\S^{N-1}))}(\varphi_1,\dots,\varphi_k)$. As observed in Section 8 in \cite{TaTe}, $\mf{v} \in \mathcal{G}_{\loc}(\R^N)$, and by homogeneity we can say even more: $\mf{v} \in \mathcal{G}_{\loc}^*(\R^N)$ and is homogeneous with respect to $0$. Therefore by Lemma \ref{lem: 26 marzo} 
\[
\text{either} \quad \gamma(\mathcal{L}_3(\S^{N-1})) = \tilde N(\mf{v},0,0^+)=1, \quad \text{or} \quad \gamma(\mathcal{L}_3(\S^{N-1})) = \tilde N(\mf{v},0,0^+) \ge \frac{3}{2}.
\]
To obtain the desired result we have to rule out the former alternative. If $N(\mf{v},0,0^+)=1$, then by Lemma 6.1 in \cite{TaTe} the nodal set $\{\mf{v}=\mf{0}\}$ is a hyper-plane, which implies that only $2$ sets $\omega_i$ are not empty. But it is not difficult to see that any partition of type $\omega= (\omega_1,\omega_2,\emptyset)$ cannot be optimal for $\mathcal{L}_3(\S^{N-1})$. Indeed, in such a situation we could replace $\omega$ by $\omega'$ obtained after a splitting of $\omega_2$ in two non-empty sets. This would be another partition achieving $\mathcal{L}_3(\S^{N-1})$, but such that $\lambda_1(\omega'_1) \neq \lambda_1(\omega_2')$, in contradiction with Theorem 3.4 in \cite{HeHHTer1}. 
\end{proof}

\begin{proof}[Proof of point ($ii$) in Corollary \ref{protothm 2}]
If $\mf{u}$ has exactly $2$ non-trivial components, then $d \in \N$ by Theorem \ref{thm blow down 2 comp}. If $\mf{u}$ has $k \ge 3$ non-trivial components, then by Theorems \ref{thm: higher dim} and \ref{thm: on optimal partition} and Lemma \ref{lem: monot part} it is necessary that 
\begin{equation}\label{eq 12 aprile}
d \ge \gamma(\mathcal{L}_k(\S^{N-1})) \ge \gamma(\mathcal{L}_3(\S^{N-1})=3/2,
\end{equation}
where we used the monotonicity of $\gamma$ (see \eqref{def di gamma}). In any case, if $d > 1$, then $d \ge 3/2$. Finally, by Lemma \ref{lem: monot part} the inequality \eqref{eq 12 aprile} is strict whenever $k>3$, so that if $d=3/2$, then $k=3$.
\end{proof}

\section{Liouville-type theorems for a general class of systems}\label{sec: general}

This section is devoted to the proof of Theorem \ref{thm: general systems}.
In what follows $\mf{u}$ is a subsolution of \eqref{general syst}, and ($H1$)-($H3$) hold. We use the following notation:
\begin{align*}
&\Lambda_i(r) := \frac{r^2 \int_{\pa B_r} |\nabla_\theta u_{i}|^2 +u_{i}^2 g_i\left(x,\mf{u}\right)}{\int_{\pa B_r} u_{i}^2} \\
& J_{i}(r) := \int_{B_r} f(|x|) \left( |\nabla u_{i}|^2 + u_{i}^2 g_i(x,\mf{u}) \right).
\end{align*}

\begin{lemma}\label{lem: step 1 monotonicity general}
For every $r>1$ and $i=1,\dots,k$, it results
\[
J_i(r) \le \frac{r}{2\gamma(\Lambda_i(r))} \int_{\pa B_r} f(|x|) \left( |\nabla u_i|^2 + u_i^2 g_i(x,\mf{u}) \right).
\]
\end{lemma}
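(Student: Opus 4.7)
The plan is to carry out the standard Almgren-type test-function computation. First, I multiply the subsolution inequality $-\Delta u_i \le -u_i g_i(x,\mf u)$ by the nonnegative weight $f(|x|)\,u_i$ and integrate over $B_r$. An integration by parts gives
\[
J_i(r) \;\le\; -\int_{B_r} u_i \nabla u_i \cdot \nabla f \;+\; \int_{\pa B_r} f(|x|)\, u_i\, \pa_\nu u_i.
\]
Writing $u_i \nabla u_i = \tfrac12 \nabla(u_i^2)$ and integrating by parts a second time turns the bulk term into $\tfrac12\int_{B_r} u_i^2 \Delta f - \tfrac12 \int_{\pa B_r} u_i^2 \pa_\nu f$. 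Since $\Delta f \le 0$ a.e.\ (the property recorded immediately after the definition \eqref{def di f}), the volume contribution is nonpositive and can be discarded, leaving
\[
J_i(r) \;\le\; -\tfrac12 \int_{\pa B_r} u_i^2\, \pa_\nu f \;+\; \int_{\pa B_r} f(|x|)\, u_i\, \pa_\nu u_i.
\]

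For $r>1$ one has $f(r)=r^{2-N}$ and $\pa_\nu f = (2-N)r^{1-N}$, so the previous estimate becomes
\[
J_i(r) \;\le\; \tfrac{N-2}{2}\, r^{1-N}\!\int_{\pa B_r}\! u_i^2 \;+\; r^{2-N}\!\int_{\pa B_r}\! u_i\, \pa_\nu u_i.
\]
Next I would decompose $|\nabla u_i|^2 = |\nabla_\theta u_i|^2 + (\pa_\nu u_i)^2$ on $\pa B_r$ and abbreviate
\[
a^2 := \int_{\pa B_r} u_i^2, \qquad b^2 := \int_{\pa B_r} (\pa_\nu u_i)^2, \qquad c^2 := \int_{\pa B_r}\!\bigl(|\nabla_\theta u_i|^2 + u_i^2 g_i(x,\mf u)\bigr),
\]
so that $\Lambda_i(r) = r^2 c^2 / a^2$. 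Multiplying the target inequality through by $r^{N-2}$, it suffices to prove
\[
\tfrac{N-2}{2r}\, a^2 + ab \;\le\; \tfrac{r}{2\gamma(\Lambda_i(r))}\bigl(b^2+c^2\bigr).
\]

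Setting $\gamma := \gamma(\Lambda_i(r))$ and invoking the characteristic identity $\gamma^2 + (N-2)\gamma = \Lambda_i(r)$ (direct from \eqref{def di gamma}), the right-hand side rearranges as $\tfrac{N-2}{2r}a^2 + \tfrac{\gamma}{2r}a^2 + \tfrac{r}{2\gamma}b^2$. The inequality therefore reduces to the sharp Young estimate
\[
ab \;\le\; \frac{\gamma}{2r}\, a^2 + \frac{r}{2\gamma}\, b^2,
\]
with optimal weight $\sqrt{r/\gamma}$. This is exactly the algebraic fact that picks out $\gamma(\Lambda_i)$ as the correct exponent in the monotonicity formula.

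I do not expect any serious obstacle. The only delicate point is that $f$ is merely $C^1$ across $\pa B_1$, so to justify the second integration by parts one splits the domain into $B_1$ and $B_r\setminus B_1$ and verifies that the contributions on $\pa B_1$ match; this follows from $f'(1^-)=f'(1^+)=2-N$. The structural hypotheses $(H1)$–$(H3)$ on $g_i$ play no role beyond the nonnegativity $g_i \ge 0$, which is what both makes the test function $f u_i$ admissible in the subsolution inequality and ensures $J_i(r),\Lambda_i(r)\ge 0$.
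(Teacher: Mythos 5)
Your proof is correct and follows essentially the same route as the paper: test the subsolution inequality against $f(|x|)u_i$, integrate by parts using the superharmonicity of $f$ to reach the boundary estimate $J_i(r)\le \tfrac{1}{r^{N-2}}\int_{\pa B_r}u_i\pa_\nu u_i+\tfrac{N-2}{2r^{N-1}}\int_{\pa B_r}u_i^2$, and then close via Young's inequality with the weight $\gamma(\Lambda_i(r))$ and the identity $\gamma^2+(N-2)\gamma=\Lambda_i(r)$. The paper compresses the integration-by-parts steps into "after some computations" and applies Young first before rearranging, whereas you spell out the computations and rearrange the target before invoking Young — but the argument is the same.
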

\begin{proof}
It is essentially contained in the proof of Lemma 7.3 in \cite{CoTeVe}, or in the proof of Lemma 2.5 in \cite{NoTaTeVe}. We report the sketch for the sake of completeness. \\
By testing the $i$-th inequality \eqref{general syst} against $f(|x|)u_i$ in $B_r$ with $r>1$ and after some computations, we obtain
\[
J_i(r) \le \frac{1}{r^{N-2}} \int_{\pa B_r} u_i \pa_{\nu} u_i+ \frac{N-2}{2r^{N-1}} \int_{\pa B_r} u_i^2,
\]
where $\pa_\nu$ denotes the outer normal derivative, as usual. By Young's inequality, it holds
\[
\left| \int_{\pa B_r} u_i \pa_{\nu} u_i\right| \le \frac{\gamma(\Lambda_i(r))}{2r} \int_{\pa B_r} u_i^2 + \frac{r}{2\gamma(\Lambda_i(r))} \int_{\pa B_r} (\pa_\nu u_i)^2.
\]
Hence, using the definition of $\gamma$, we deduce that
\begin{align*}
J_i(r) & \le \frac{1}{2r^{N-1} \gamma(\Lambda_i(r))} \left[ \left( \gamma(\Lambda_i(r))^2 + (N-2) \gamma(\Lambda_i(r)) \right) \int_{\pa B_r} u_i^2 + r^2 \int_{\pa B_r} (\pa_\nu u_i)^2 \right] \\
& = \frac{1}{2r^{N-3} \gamma(\Lambda_i(r))} \int_{\pa B_r} |\nabla_\theta u_i|^2 +u_i^2 g_i(x,\mf{u}) + (\pa_\nu u_i)^2,
\end{align*} 
and the thesis follows.
\end{proof}

The following Alt-Caffarelli-Friedman monotonicity formula is the natural counterpart of Lemma 7.3 of \cite{CoTeVe} in the present setting.

\begin{lemma}\label{lem: monot several k}
Let us assume that for some $1 \le h \le k$ it holds $u_i>0$ in $\R^N$ for every $i=1,\dots,h$. Let $0<q<\beta(h,N)$, and let us consider the function
\[
J(r):= \prod_{i=1}^h \frac{1}{r^{q}} J_i(r).
\]
There exists $r'=r'(q)>0$ such that $J$ is monotone non-decreasing in $(r',+\infty)$.
\end{lemma}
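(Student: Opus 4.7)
The plan is to reduce the monotonicity of $J$ to an asymptotic spectral inequality $\sum_i \gamma(\Lambda_i(r)) \gtrsim h\beta(h,N)/2$, and then to establish that inequality by a blow-down / contradiction argument that extracts an admissible partition of $\S^{N-1}$.

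\textbf{Step 1 (Logarithmic differentiation).} I would start by differentiating $\log J(r)$: since by Lemma \ref{lem: step 1 monotonicity general} $J_i'(r) \ge (2\gamma(\Lambda_i(r))/r)\,J_i(r)$, one gets
\[
\frac{d}{dr}\log J(r) \;=\; \sum_{i=1}^h \left(\frac{J_i'(r)}{J_i(r)}-\frac{q}{r}\right) \;\ge\; \frac{1}{r}\Bigl(2\sum_{i=1}^h \gamma(\Lambda_i(r))-hq\Bigr).
\]
Because $q<\beta(h,N)$ and $\gamma$ is monotone, it therefore suffices to show
\[
\liminf_{r\to+\infty}\frac{2}{h}\sum_{i=1}^h\gamma(\Lambda_i(r))\;\ge\;\beta(h,N), \qquad (*)
\]
and the choice of $r'(q)$ will be the threshold past which the right-hand side is larger than $q$.

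\textbf{Step 2 (Rescaled traces and contradiction).} To prove $(*)$, I would argue by contradiction: assume there exist $\eta>0$ and $R_n\to+\infty$ with $\frac{2}{h}\sum_i \gamma(\Lambda_i(R_n))\le \beta(h,N)-\eta$. In particular each $\Lambda_i(R_n)$ is bounded, so setting
\[
v_{i,n}(\theta):=\frac{u_i(R_n\theta)}{\rho_{i,n}},\qquad \rho_{i,n}^{2}:=\frac{1}{R_n^{N-1}}\int_{\partial B_{R_n}}u_i^2,
\]
one obtains a sequence normalised in $L^2(\S^{N-1})$ and uniformly bounded in $H^1(\S^{N-1})$ (with Dirichlet energy $\le \Lambda_i(R_n)$). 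Up to a subsequence, $v_{i,n}\rightharpoonup v_{i,\infty}$ weakly in $H^1$, strongly in $L^2$, and a.e., and $\|v_{i,\infty}\|_{L^2(\S^{N-1})}=1$.

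\textbf{Step 3 (Segregation of the limits).} This is the main obstacle, and it is exactly where the structural hypotheses $(H1)$--$(H3)$ and the algebraic growth \eqref{eq 28 marzo} enter. Setting $\omega_i:=\{v_{i,\infty}>0\}$, I would show that $\omega_i\cap\omega_j$ has zero measure for $i\ne j$. The strategy is the one behind the Alt--Caffarelli--Friedman formulae of \cite{CoTeVe,NoTaTeVe}: if a point $\bar\theta$ belonged to two limiting supports, uniform positivity of $u_i$ and $u_j$ would persist in a whole uniform neighbourhood of $R_n\bar\theta$, so by $(H1)$--$(H3)$ the potential $R_n^2\,g_i(\cdot,\mf{u})$ would be forced to diverge there, and combined with Lemma \ref{lem: exp decay} this would produce exponential decay of $u_i$ at that scale, contradicting $v_{i,\infty}(\bar\theta)>0$. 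Keeping track of the different normalisation constants $\rho_{i,n}$ for different components (which may have very different growth rates) is the delicate bookkeeping; the control of $\rho_{i,n}$ from below uses the algebraic growth to bound $R_n^2 \rho_{j,n}^2$ from below up to logarithmic factors.

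\textbf{Step 4 (Closing by the partition problem).} Once Step 3 is secured, $(\omega_1,\ldots,\omega_h)\in\mathcal P_h(\S^{N-1})$ (after possibly dropping empty components and re-partitioning, using the monotonicity of $\lambda_1$ and of $\gamma$). The weak lower semicontinuity of the Dirichlet integral and the Rayleigh characterisation of $\lambda_1$ give
\[
\liminf_{n\to\infty}\Lambda_i(R_n)\;\ge\;\liminf_{n\to\infty}\frac{\int_{\S^{N-1}}|\nabla_{\S^{N-1}}v_{i,n}|^2}{\int_{\S^{N-1}}v_{i,n}^2}\;\ge\;\lambda_1(\omega_i),
\]
and hence, using \eqref{def di beta},
\[
\liminf_{n\to\infty}\frac{2}{h}\sum_{i=1}^h\gamma(\Lambda_i(R_n))\;\ge\;\frac{2}{h}\sum_{i=1}^h\gamma(\lambda_1(\omega_i))\;\ge\;\beta(h,N),
\]
contradicting the assumption on $R_n$. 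Thus $(*)$ holds, the bracket in Step 1 is eventually positive, and $J$ is monotone non-decreasing on $(r'(q),+\infty)$.
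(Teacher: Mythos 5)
Your Steps 1, 2 and the closing of Step 4 follow the paper's scheme, but Step 3 — the segregation of the limits — departs from the paper's argument and, as sketched, has a genuine gap.

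\textbf{Where the gap is.} You propose to obtain $\tilde u_i\tilde u_j=0$ by a pointwise exponential-decay argument: if $v_{i,\infty}(\bar\theta)>0$ and $v_{j,\infty}(\bar\theta)>0$ then ``uniform positivity persists in a whole uniform neighbourhood of $R_n\bar\theta$'', forcing a large potential in Lemma \ref{lem: exp decay}. This step requires more regularity than is available here. The convergence you have extracted is only weak in $H^1(\S^{N-1})$, strong in $L^2$, and a.e.; for $N\ge 3$ this does not give any uniform neighbourhood of positivity, and in the generality of subsolutions of \eqref{general syst} under ($H1$)--($H3$) there is no uniform H\"older estimate to fall back on (unlike for solutions of the specific system \eqref{system}, where the Noris--Tavares--Terracini--Verzini bounds are used in the proof of Proposition \ref{thm: zero total}). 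So the mechanism you borrow from that proposition does not transfer here, and the ``delicate bookkeeping'' you flag is not just bookkeeping — the argument as stated does not close.

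\textbf{What the paper does instead, and what you are missing.} Segregation comes for free from the boundedness of $\Lambda_i(r_n)$ together with $r_n\to+\infty$, because $\Lambda_i$ already contains the interaction term $u_i^2 g_i(x,\mf{u})$. After rescaling one obtains, in addition to $\int_{\pa B_1}|\nabla_\theta v_{i,n}|^2\le\Lambda_i(r_n)$, the second estimate of \eqref{limitatezze acf generica}:
\[
\int_{\pa B_1}v_{i,n}^2\,g_i\bigl(r_nx,\mf{u}(r_nx)\bigr)\;\le\;\frac{\Lambda_i(r_n)}{r_n^2}\;\longrightarrow\;0.
\]
Combining this with ($H1$), ($H3$), the subharmonicity lower bound $\rho_{j,n}^2\ge C_0$ (inequality \eqref{eq aprile}, which you never invoke) and Fatou's lemma yields $\int_{\pa B_1}\tilde u_i^2\,\underline{g}_i\bigl(C_0^{1/2}\tilde{\mf u}\bigr)=0$, and then ($H2$) gives the segregation a.e. This is a purely integral argument that needs no pointwise regularity, no Lemma \ref{lem: exp decay}, and no control of the ratios between the $\rho_{i,n}$. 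Note also that you then close with the partition definition \eqref{def di beta}, which requires $(\omega_i)$ to be open, connected and pairwise disjoint; the supports of weak $H^1$ limits are in general neither open nor connected. The paper avoids this second issue by using the relaxed characterization \eqref{caracter beta} of $\beta(h,N)$ from Remark \ref{rem: su beta relaxed}, which is posed directly over segregated $H^1(\S^{N-1})$ functions and is exactly what the limits $\tilde u_i$ satisfy.
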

\begin{remark}\label{rem: su beta relaxed}
The optimal value $\beta(k,N)$ defined in \eqref{def di beta} can be characterized as
\begin{equation}\label{caracter beta}
\beta(k,N) = \inf\left\{ \frac{2}{k}  \sum_{i=1}^k \gamma\left(\int_{\S^{N-1}} |\nabla_{\theta} u_i|^2\right)\left| \begin{array}{l}
u_i \in H^1(\S^{N-1}), \int_{\S^{N-1}} u_i^2 = 1, \\ u_i\cdot u_j \equiv 0 \ \forall i \neq j \end{array} \right.
\right\},
\end{equation}
see Section 2 of \cite{CoTeVePDE}.
\end{remark}
\begin{proof}
By Lemma \ref{lem: step 1 monotonicity}
\[
\frac{d}{dr} J(r) = -\frac{qh}{r} + \sum_{i=1}^h \frac{\int_{\pa B_r} f(|x|)\left( |\nabla u_i|^2 + u_i^2 g_i(x,\mf{u}) \right)   }{\int_{B_r} f(|x|)\left( |\nabla u_i|^2 + u_i^2 g_i(x,\mf{u}) \right)} \ge -\frac{qh}{r} + \sum_{i=1}^h \frac{2\gamma(\Lambda_i(r))}{r}.
\]
Therefore, we aim at proving that there exists $r'>1$ such that 
\[
\sum_{i=1}^h 2 \gamma(\Lambda_i(r))-hq \ge 0 \qquad \text{for every $r>r'$}.
\]
By contradiction, if this is not true there exists $r_n \to +\infty$ such that the left hand side is smaller than or equal to $0$, and by the monotonicity of $\gamma$ this implies that $(\Lambda_i(r_n))$ is bounded. Let us define
\[
u_i^{(r_n)}(x):= \frac{u_i(r_n x)}{\left(\frac{1}{r_n^{N-1}} \int_{\pa B_{r_n}} u_i^2 \right)^{1/2}}.
\]
One can easily compute
\begin{equation}\label{limitatezze acf generica}
\begin{split}
\int_{\pa B_1} |\nabla_\theta u_i^{(r_n)}|^2 & \le \Lambda_i(r_n)\\
\int_{\pa B_1} \left(u_i^{(r_n)}\right)^2 g_i\left(r_n x, \mf{u}(r_n x)\right) & \le \frac{\Lambda_i(r_n)}{r_n^2}.
\end{split}
\end{equation}
By the first one $\mf{u}^{(r_n)} \wc \tilde{\mf{u}}$ weakly in $H^1(\pa B_1)$, strongly in $L^2(\pa B_1)$, and a.e. in $\pa B_1$. We claim that $\tilde{\mf{u}}$ is segregated, that is $\tilde u_i \tilde u_j =0$ a.e. in $\pa B_1$ for every $i \neq j$. To check this, we note that by subharmonicity (recall ($H1$)) and by the fact that $u_1,\dots,u_h>0$ in $\R^N$ we have
\begin{equation}\label{eq aprile}
\frac{1}{r_n^{N-1}} \int_{\pa B_{r_n}} u_i^2 \ge \mathcal{H}^{N-1}(\S^{N-1}) u_i^2(0) \ge C_0 
\end{equation}
for every $i=1,\dots,h$ and for every $n$. Thus by Fatou's lemma
\begin{align*}
\int_{\pa B_1} \tilde u_i^2 \underline{g}_i\left(C_0^{1/2} \tilde{\mf{u}}\right) &\le \liminf_{n \to +\infty}  \int_{\pa B_1}\left(u_i^{(r_n)}\right)^2  \underline{g}_i\left(C_0^{1/2} \mf{u}^{(r_n)}\right) 
 \\
& \le (H1) \le  \liminf_{n \to +\infty}  \int_{\pa B_1}\left(u_i^{(r_n)}\right)^2 g_i\left(r_n x,C_0^{1/2} \mf{u}^{(r_n)}\right) \\
& \le ((H3)+ \eqref{eq aprile}) \le  \liminf_{n \to +\infty}  \int_{\pa B_1} \left(u_i^{(r_n)}\right)^2 g_i\left(r_n x,\mf{u}(r_n x)\right)= \eqref{limitatezze acf generica} = 0,
\end{align*}
which in light of assumption ($H2$) implies $\tilde u_j \tilde u_i = 0$ a.e. in $\pa B_1$, proving our claim. Now it is not difficult to conclude, by using the absurd assumption, the first estimate in \eqref{limitatezze acf generica}, the definition of $\gamma$ and the characterization of $\beta(h,N)$, see the \eqref{caracter beta}:
\[
\beta(h,N) h > q h \ge \liminf_{n \to +\infty} \sum_{i=1}^h 2 \gamma(\Lambda_i(r_n)) \ge  \sum_{i=1}^h 2 \gamma\left( \int_{\pa B_1} |\nabla_\theta \tilde u_i|^2  \right)  \ge  \beta(h,N) h,
\]
a contradiction. 
\end{proof}


\begin{proof}[Proof of Theorem \ref{thm: general systems}]
Let us assume by contradiction that for some $1 \le h \le k$ such that $\beta(h,N) > 2d$ there exists a solution of \eqref{system} with at least $h$ positive components, say $u_i>0$ for $i=1,\dots,h$. Let $q \in (2d,\beta(h,N))$. On one side, by Lemma \ref{lem: monotonicity} it results that 
\begin{equation}\label{eq1Liouv2}
\prod_{i=1}^h \frac{1}{r^{q}} J_i(r) \ge C \qquad \forall r \gg 1.
\end{equation}
On the other side, as in the estimate \eqref{stima 2} it results that that for every $i=1,\dots,h$ and $r>1$
\begin{equation}\label{eq2Liouv2}
J_i(r) \le C r^{2d},
\end{equation}
where we used the growth assumption on $\mf{u}$. Comparing \eqref{eq1Liouv2} and \eqref{eq2Liouv2}, and recalling that $q>2d$, we obtain a contradiction for $r$ sufficiently large.
\end{proof}

\section{Symmetry results}\label{sec: 1-dim}

We now pass to the $1$-dimensional symmetry of solutions to \eqref{system}. Thanks to Corollary \ref{protothm 2} we are in position to extend the main results in \cite{FaSo} and \cite{Wa,Wa2} for systems with an arbitrary number of components.

\begin{proof}[Proof of Theorem \ref{thm: 1-dimensional symmetry}]
($i$) It is a straightforward consequence of point ($i$) in Corollary \ref{protothm 2} and of the main results in \cite{Wa} and \cite{Wa2}. \\
($ii$) Only to fix our minds, let $i=1$ and $j=2$. We know that $u_1(x',x_N) \to +\infty$ as $x_N \to +\infty$ uniformly in $x' \in \R^{N-1}$. Arguing as in the proof of Corollary 1.2 in \cite{FaSo}, we wish to show that as a consequence $u_{l,R}(x) \to 0$ as $R \to +\infty$ in the half-space $\R^{N-1}_+:=\R^{N-1} \times (0,+\infty)$ for every $l \neq 1$. Given $K>0$, by assumption there exists $M>0$ such that $u_1>K$ in $\{x_N>M/2\}$. For an arbitrary $\theta>1$, if $x \in \left\{x_N> M, \ |x'| < \theta x_N\right\}$ the ball $B_x := B_{x_{N}/100}(x)$ is contained in $\left\{x_N>M/2, \ |x'|< 2\theta x_N \right\}$. Consequently, if $x \in \left\{x_N>M, \ |x'| < \theta x_N\right\}$ we have 
\[
u_1(y) \ge K_x:= \inf_{z \in B_x} u(z) \ge K \qquad \forall y \in B_x, 
\]
and since $\mf{u}$ has algebraic growth
\[
u_l(y) \le C (1+|y|^d) \le C\left(1+ C(2 \theta +1)^d y_N^d) \right) \le C(1+ x_N^d)=:\delta_x,
\]
for every $y \in B_x$, for every $l \neq 1$. 
Now,
\[
\begin{cases}
-\Delta u_l \le -K^2 u_l & \text{in $B_x$} \\
u_l \ge 0 & \text{in $B_x$}  \\
u_l \le \delta_x & \text{in $B_x$},
\end{cases}
\]
so that Lemma \ref{lem: exp decay} applies: 
\begin{equation}\label{eq dicembre}
u_l(x) \le C \delta_x e^{-C K x_N} \le C (1+x_N^d) e^{-C K x_N} \qquad \forall x \in \left\{x_N>M, \ |x'| < \theta x_N\right\},
\end{equation}
for every $l \neq 1$. Let $x \in \left\{x_N>0, |x'| < \theta x_N\right\}$; there exists $R_x>0$ such that $Rx \in  \left\{x_N>M, \ |x'| < \theta x_N\right\}$ for every $R>R_x$. Therefore
\[
\lim_{R \to +\infty} u_{l,R}(x) = \lim_{R \to +\infty} \frac{u_l(Rx)}{\sqrt{H(\mf{u},0,R)}}  = 0 \qquad \forall x \in \left\{x_N>0, |x'| < \theta x_N\right\},
\]
where, in addition to \eqref{eq dicembre}, we used that $H(\mf{u},0,R_n) \ge C >0$ by Proposition \ref{prop: doubling}-($i$). As $\theta$ has been arbitrarily chosen, we deduce that $u_{l,R} \to 0$ pointwise in $\R^N_+$ for every $l \neq 1$. On the other hand, by Theorem \ref{them: blow down k dim 2}, we know that up to a subsequence $\mf{u}_R \to \mf{u}_\infty$ in $\mathcal{C}^0_{\loc}(\R^N)$, where $\mf{u}_\infty$ has the properties described in Theorem \ref{them: blow down k dim 2}. We infer that $u_{l,\infty}=0$ in $\R^N_+$ for every $l \neq 1$. Analogously, starting from the fact that $u_2(x',x_N) \to +\infty$ as $x_N \to -\infty$ uniformly in $x' \in \R^{N-1}$, we deduce that $u_{l,\infty} \equiv 0$ in $\R^N_-:=\R^{N-1} \times (-\infty,0)$ for every $l \neq 2$. Since $u_{l,\infty}$ is continuous, $u_{l,\infty} \equiv 0$ in $\R^N$ for every $l \neq 1,2$, and thanks to Proposition \ref{thm: zero total} this implies that $u_l \equiv 0$ in $\R^N$ for any such $l$. Therefore $(u_1,u_2)$ is a solution of the $2$-component system \eqref{system 2 comp} such that
\[
\lim_{x_N \to \pm \infty} \left( u_1(x',x_N) -u_2(x',x_N) \right) = \pm \infty
\]
uniformly in $x' \in \R^{N-1}$, and Corollary 1.2 of \cite{FaSo} gives the desired result. 
\end{proof}

\footnotesize

\end{document}